\DeclareMathOperator{\Hdim}{H.dim }
\DeclareMathOperator{\supp}{supp }
\newtheorem{lemma}{Lemma}[section]
\newtheorem{theorem}{Theorem}[section]
\theoremstyle{remark}
\newtheorem*{remark}{Remark}
\numberwithin{equation}{section}
\DeclareMathOperator{\im}{Im}
\DeclareMathOperator{\re}{Re}
\DeclareMathOperator{\cl}{clos}
\DeclareMathOperator{\inte}{int}
\DeclareMathOperator{\aut}{Aut}
\DeclareMathOperator{\osc}{osc}
\DeclareMathOperator{\bSigma}{\mathbf \Sigma}
\DeclareMathOperator{\bH}{\mathbf H}
\DeclareMathOperator{\Mdim}{M. dim}
\DeclareMathOperator{\hyp}{hyp}
\DeclareMathOperator{\per}{per}
\DeclareMathOperator{\ei}{I}
\newcommand{\dzy}{\,\frac{|dz|^2}{y}}
\newcommand{\rhoH}{\rho_{\mathbb{H}}}
\newcommand{\Hbar}{\overline{\mathbb H}}
\DeclareMathOperator{\id}{Id}
\DeclareMathOperator{\bel}{dil. }
\title{Quasicircles of dimension $1+k^2$ do not exist}
\author{Oleg Ivrii}
\date{April 15, 2016}
\begin{document}

\maketitle
\begin{abstract}
A well-known theorem of S.~Smirnov states that the Hausdorff dimension of a  $k$-quasicircle is at most $1+k^2$. Here, we show that
 the precise upper bound  $D(k) = 1+\Sigma^2 k^2 + \mathcal O(k^{8/3-\varepsilon})$
 where  $\Sigma^2$ is the maximal asymptotic variance of the Beurling transform, taken over the unit ball of $L^\infty$. 
The quantity $\Sigma^2$ was introduced  in a joint work with K.~Astala, A.~Per\"al\"a and I.~Prause where it was proved that
 $0.879 < \Sigma^2 \le 1$, while recently, H.~Hedenmalm discovered that surprisingly $\Sigma^2 <1$.
We deduce
the asymptotic expansion of $D(k)$ from a more general statement relating the universal bounds for the integral means spectrum and the asymptotic variance of conformal maps. 
Our proof combines fractal approximation techniques with
 the classical argument of J.~Becker and Ch.~Pommerenke  for estimating integral means.
\end{abstract}

\section{Introduction}

 Let $D(k)$ denote the maximal Hausdorff dimension of a $k$-quasicircle, the image of the unit circle under a $k$-quasiconformal mapping of the plane.
The first non-trivial bound (with the right growth rate) was given in 1987 by Becker and Pommerenke \cite{BP} who proved that
$1 + 0.36 \, k^2 \le D(k) \le 1 + 37\, k^2$ if $k$ is small. In 1994, in his landmark work \cite{astala-area} on the area distortion of quasiconformal mappings, K.~Astala suggested that the correct bound was
\begin{equation}
\label{eq:smirnov-bound}
D(k) \le 1+k^2, \qquad 0 \le k <1.
\end{equation}
Using a clever variation of Astala's argument, S.~Smirnov \cite{smirnov} showed that  the bound 
(\ref{eq:smirnov-bound}) indeed holds. A systematic investigation of the sharpness of (\ref{eq:smirnov-bound}) was
initiated in \cite{AIPP} where the quantity
\begin{equation}
\Sigma^2 \, := \, \sup_{|\mu| \le \chi_\mathbb{D}} \sigma^2(\mathcal S\mu)
\end{equation}
was introduced. Here,
\begin{equation} 
\label{eq:beurling}
 \mathcal S \mu(z) \, =\, -\frac{1}{\pi} \int_{\mathbb{D}} \frac{\mu(\zeta)}{(\zeta-z)^2} \, |d\zeta|^2
 \end{equation} 
 denotes the {\em Beurling transform} of $\mu$ and
 \begin{equation}
 \label{eq:sigma2-def}
 \sigma^2(g) = \lim_{R \to 1^+} \frac{1}{2\pi|\log(R-1)|} \int_{|z|=R} |g(z)|^2 \, |dz|
 \end{equation} 
 is the {\em asymptotic variance} of a Bloch function $g \in \mathcal B(\mathbb{D}^*)$ on the exterior unit disk.
 The motivation for studying $\Sigma^2$ comes from the work of McMullen \cite{mcmullen} who showed
 that in dynamical cases (i.e.~when $\mu$ is invariant under a co-compact Fuchsian group or eventually-invariant under a Blaschke product), one has 
 an explicit connection between Hausdorff dimension and asymptotic variance:
 \begin{equation}
\label{eq:mcmullen-local}
2 \frac{d^2}{dt^2} \biggl |_{t=0} \Hdim w^{t \mu}(\mathbb{S}^1) = \sigma^2(\mathcal S\mu).
\end{equation}
 
In \cite{AIPP}, it was established that $\liminf_{k \to 0} (D(k) - 1)/k^2 \ge \Sigma^2$ and
$
0.879 \le \Sigma^2 \le 1,
$
while recently, H.~Hedenmalm surprisingly proved that $\Sigma^2 < 1$, see \cite{hedenmalm}.
 Here, we complete this ``trilogy'' by showing: 
 \begin{theorem}
 \label{dim-exp}
 $$
 D(k) = 1+\Sigma^2 k^2 + \mathcal O(k^{8/3-\varepsilon}), \qquad \text{for any }\varepsilon > 0.
 $$
 \end{theorem}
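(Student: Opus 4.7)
The plan is to bound $D(k)$ from above and below by $1 + \Sigma^2 k^2$ with error of order $k^{8/3-\varepsilon}$ by passing through the universal integral means spectrum $B(t, k)$ for $k$-quasiconformal deformations of $\mathbb{D}^*$. The standard identification reads $D(k) - 1 = t^*(k)$, where $t^*(k)$ is determined by $B(t^*, k) = t^* - 1$, so an estimate of the form $B(t, k) = (\Sigma^2/4)\, t^2 k^2 + O(k^{8/3-\varepsilon})$, uniform over a neighborhood of $t = t^*(k)$, will deliver the theorem.

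For the upper bound, I would adapt the classical Becker--Pommerenke integration. Let $\phi = w^\mu$ be a conformal map on $\mathbb{D}^*$ whose image boundary is a $k$-quasicircle and set $g = \log \phi'$, a Bloch function that agrees with $k \cdot \mathcal S \mu$ up to terms of higher order in $k$. Radial differentiation of $\log \int_{|z|=r} |\phi'|^t \, |dz|$ followed by integration produces a bound of shape $\beta_\phi(t) \le (t^2 / 4) \cdot \Vert g \Vert^2 + \text{lower order}$, where classically one uses the pointwise Bloch norm. The key refinement is to replace this pointwise norm with the $L^2$-type average appearing in (\ref{eq:sigma2-def}); since $\sigma^2(\mathcal S \mu) \le \Sigma^2$ by definition, one obtains $\beta_\phi(t) \le \Sigma^2 t^2 k^2 / 4 + O(k^{8/3-\varepsilon})$, yielding the upper bound on $D(k)$.

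For the lower bound, I would promote near-optimizers of $\Sigma^2$ to dynamical Beltrami coefficients. Choose $\mu_0$ with $|\mu_0| \le \chi_\mathbb{D}$ and $\sigma^2(\mathcal S \mu_0)$ within $\varepsilon$ of $\Sigma^2$. Using the fractal approximation procedure of \cite{AIPP}, replace $\mu_0$ by a Beltrami coefficient $\mu$ invariant under a cocompact Fuchsian group (or eventually invariant under a Blaschke product), whose Beurling transform retains nearly the same asymptotic variance. McMullen's identity (\ref{eq:mcmullen-local}) then gives a family of $k$-quasicircles of Hausdorff dimension at least $1 + \Sigma^2 k^2 - O(k^{8/3-\varepsilon})$, strengthening the $\liminf$-level statement of \cite{AIPP} to an effective expansion.

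The main obstacle is synchronizing the $k^{8/3-\varepsilon}$ error term on both sides. On the upper-bound side, Becker--Pommerenke's integration is only first-order accurate in $k$, so extracting the precise constant $\Sigma^2$ demands interpolating between the pre-asymptotic circle mean $\frac{1}{|\log(R-1)|}\int_{|z|=R} |g|^2 \, |dz|$ and its limit $\sigma^2(g)$ at a radial scale $R - 1$ carefully tuned to $k$. The exponent $8/3$ should emerge from balancing this truncation scale against cubic-in-$k$ corrections in $g = k \cdot \mathcal S \mu + \dots$ and the convergence rate in (\ref{eq:sigma2-def}). On the lower-bound side, the fractal approximation depth must be calibrated to this same scale so that the dynamical structure becomes visible before higher-order corrections to (\ref{eq:mcmullen-local}) become significant.
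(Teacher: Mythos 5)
You have the right skeleton (Becker--Pommerenke differential inequality, replacing the pointwise Bloch bound on $n_f$ by an averaged one, Minkowski/Hausdorff equivalence and anti-symmetrization), but two concrete ingredients are missing, and without them your balance only reaches $O(k^{5/2})$, not $O(k^{8/3-\varepsilon})$. First, you cannot simply invoke ``$\sigma^2(\mathcal S\mu)\le\Sigma^2$ by definition'' inside the integration: what the Becker--Pommerenke argument produces after averaging over annuli is a box average $\fint_B |2n_f/\rho|^2$, whereas $\Sigma^2$ is defined as a limit of circle means as $R\to 1^+$. Converting that limit into a bound valid \emph{uniformly over all boxes} of fixed hyperbolic height $R$ with error $O(1/R)$ is the content of the Box Lemma (Lemma~\ref{boxcart}), which rests on the locality of $(\mathcal S\mu)'/\rho_{\mathbb H}$ (Lemmas~\ref{qbounds-H}--\ref{boxcart5}) and a periodization/fractal-approximation argument. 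This is a real step, not a definition, and your ``interpolating between the pre-asymptotic circle mean and its limit'' does not substitute for it.

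Second, the exponent $8/3$ does not come from balancing a truncation scale against cubic corrections. The box-average bound is $(\Sigma^2 + C/R)k^2 + Ck^3$, and the oscillation of $|f'(z)^p|$ across a box of hyperbolic height $R$ costs $\exp(CRk|p|)$; optimizing $R\asymp 1/\sqrt{k|p|}$ gives only $O(k^{5/2})$. To take $R=k^{-\gamma}$ with $\gamma>1/2$ one must replace the crude oscillation factor by an exceptional-set argument: introduce $\mathscr E=\{z\in B:|\log f'(z)-\log f'(z_B)|>k^\gamma\}$ and bound its size via the sub-Gaussian (Makarov-type) estimate for Bloch functions, $|\{z\in L_S:|\re g(z)|>\eta\}|\le\exp(-c_0\eta^2/(k^2 S))$. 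Setting $\eta=k^\gamma$, $S\le R=k^{-\gamma}$, decay forces $3\gamma-2<0$, i.e.\ $\gamma<2/3$, and this is exactly where $8/3$ originates. That sub-Gaussian exceptional-set mechanism (Section~7, Lemmas~\ref{quotient-lemma}--\ref{global-bound}) is the heart of the improved error term and is absent from your sketch. On the lower bound, your McMullen-identity route is plausible but still needs the same quantitative Box Lemma to control the approximation error; the paper instead runs the differential inequality in reverse against Lemma~\ref{boxcart}(ii).
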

 In particular, Smirnov's bound is not sharp for small $k$.
  Istv\'an Prause informed me (private communication) that 
one can use the methods of \cite{prause-smirnov, smirnov} to show that 
 this implies that $D(k) < 1 + k^2$ for all $0 < k < 1$.

\subsection{Integral means spectra}

The aim of {\em geometric function theory} is to understand the geometric complexity of the
boundary of a  simply-connected domain $\Omega \subset \mathbb{C}$ in terms of the analytic complexity
of the Riemann map $f: \mathbb{D} \to \Omega$.
For domains with rough boundaries, the relationship between $f$ and $\partial \Omega$ may be quantified using several
geometric characteristics. 
 One notable characteristic is the {\em integral means spectrum}
$$
\beta_f(p) = \limsup_{r \to 1} \frac{\log \int_{|z|=r}|f'(z)^p| \, d\theta}{\log \frac{1}{1-r}}, \qquad p \in \mathbb{C}.
$$
The importance of the spectrum $\beta_f(p)$ lies in the fact that it is  Legendre-dual to the multifractal spectrum of harmonic measure \cite{makarov99, binder}. 
 Taking the supremum of $\beta_f(p)$ over bounded simply-connected domains, one obtains the {\em universal integral means spectrum}
 $$B(p) = \sup \beta_f(p).$$ 
Apart from various estimates \cite{hs, jones-survey},
 not much is rigorously known about the qualitative features of $B(p)$.
For instance, it is expected that $B(p) = B(-p)$ is an even function. However, while $B(2)=1$ is an easy consequence of the area theorem, the statement ``$B(-2) =1$'' is equivalent to the Brennan conjecture, which is a well-known and difficult open problem.
Nor is it known whether $B(p) \in C^1$, let alone real-analytic.
In this work, we are concerned with the quadratic behaviour of $B(p)$ near the origin.

It will be convenient for us to work with conformal maps defined on  the exterior unit disk $\mathbb{D}^* = \{z: |z| > 1\}$. Unless stated otherwise, we
 assume that 
conformal maps are in {\em principal normalization}\,, satisfying $\varphi(z) = z + \mathcal O(1/|z|)$ near infinity.

Let $\bSigma_k$ be the collection of conformal maps that admit 
$k$-quasiconformal extensions to the complex plane with dilatation at most $k$.
Maximizing over $\bSigma_k$, we obtain the spectra $B_k(p) := \sup_{\varphi \in \bSigma_k} \beta_\varphi(p)$.
We show:

\begin{theorem}
\label{main-thm}
\begin{equation}
\label{eq:main-eq}
\lim_{p \to 0}
  \frac{B_k(p)}{|p|^2/4} \, = \,  \Sigma^2(k)  \, := \, \sup_{\varphi \in \bSigma_k} \sigma^2(\log \varphi').
\end{equation}
\end{theorem}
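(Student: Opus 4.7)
The plan is to establish the two inequalities
$$\liminf_{p\to 0}\frac{B_k(p)}{|p|^2/4}\;\geq\;\Sigma^2(k)\qquad\text{and}\qquad \limsup_{p\to 0}\frac{B_k(p)}{|p|^2/4}\;\leq\;\Sigma^2(k)$$
separately. Both rest on the following \emph{Gaussian heuristic}: if $\log\varphi'(re^{i\theta})$ behaved like a mean-zero Gaussian random variable in $\theta$ whose second moment $\frac{1}{2\pi}\int|\log\varphi'|^2 d\theta$ is $\sim\sigma^2(\log\varphi')\log\frac{1}{r-1}$, then using the harmonic-conjugate symmetries $\int(\log|\varphi'|)^2 d\theta=\int(\arg\varphi')^2 d\theta=\tfrac12\int|\log\varphi'|^2 d\theta$ together with the vanishing of $\int\log|\varphi'|\cdot\arg\varphi'\,d\theta$ (forced by the analyticity of $\log\varphi'$ on $\mathbb{D}^*$ with $\log\varphi'(\infty)=0$), a direct computation of the exponential moment gives
$$\int_{|z|=r}|\varphi'(z)^p|\,d\theta\;\approx\;2\pi\,(r-1)^{-|p|^2\sigma^2(\log\varphi')/4},$$
i.e.\ $\beta_\varphi(p)=|p|^2\sigma^2(\log\varphi')/4$. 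The task is to make this rigorous in each direction.

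For the \emph{lower bound}, I would fix $\varphi\in\bSigma_k$ nearly extremizing $\sigma^2(\log\varphi')$ and apply \emph{fractal approximation}: replace $\varphi$ by a conformal map $\tilde\varphi\in\bSigma_{k+\varepsilon}$ whose Beltrami coefficient is eventually invariant under a Blaschke product (or a cocompact Fuchsian group), chosen so that $\sigma^2(\log\tilde\varphi')$ stays close to $\Sigma^2(k)$. For such dynamical $\tilde\varphi$, the distribution of $\log\tilde\varphi'$ on circles $|z|=r$ is governed by a Ruelle transfer operator; a central-limit theorem for the associated equilibrium state, combined with McMullen's identity \eqref{eq:mcmullen-local} (which equates the second derivative of $\Hdim$ along Beltrami deformations with $\sigma^2(\mathcal S\mu)$), justifies the Gaussian exponential-moment calculation and yields the lower bound on $\beta_{\tilde\varphi}(p)$. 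Passing $\varepsilon\to 0$ finishes this half.

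For the \emph{upper bound}, I would adapt the Becker--Pommerenke method \cite{BP}, which controls $\int|\varphi'|^p d\theta$ by iterating increment estimates for $\log\varphi'$ along concentric circles using the Bloch seminorm; in its classical form this only yields the crude bound $\beta_\varphi(p)\leq c|p|^2\|\log\varphi'\|_{\mathcal B}^2$. To sharpen the constant from the pointwise Bloch supremum to the averaged $\sigma^2(\log\varphi')$ of \eqref{eq:sigma2-def}, I would run their iteration along the dyadic annuli $\{1+2^{-n-1}\le|z|\le 1+2^{-n}\}$ and observe that the accumulated quadratic energy is controlled not by $\|\log\varphi'\|_{\mathcal B}^2$ multiplied by the number of scales $n\sim\log\frac{1}{r-1}$, but by $\sum_{j=1}^n \sigma_j^2$, where the per-scale variances $\sigma_j^2$ average to $\sigma^2(\log\varphi')$ as $n\to\infty$. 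A martingale exponential inequality exponentiates this sum to produce the sharp leading coefficient.

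The main obstacle is precisely this upper-bound refinement: the Becker--Pommerenke machinery natively tracks a Bloch supremum, and replacing it by the genuinely averaged $\sigma^2$ forces a multiscale martingale decomposition whose constant must be sharp at leading order in $|p|^2$. Fractal approximation plays a complementary role on this side as well, since on each dyadic annulus it allows one to model $\log\varphi'$ by a dynamical approximant to which \eqref{eq:mcmullen-local} applies, thereby converting the abstract per-scale variances back into the single functional $\sigma^2(\log\varphi')$.
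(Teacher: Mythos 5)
Your proposal correctly identifies the two main tools the paper uses --- the Becker--Pommerenke iteration and quasiconformal fractal approximation --- and the Gaussian heuristic explaining the factor of $4$ is a reasonable way to see where the constant comes from. But the upper bound, as sketched, has a genuine structural gap. You aim to show that the ``per-scale variances $\sigma_j^2$ average to $\sigma^2(\log\varphi')$ as $n\to\infty$'' for the map $\varphi$ in hand, and to exponentiate this to get $\beta_\varphi(p)\le\sigma^2(\log\varphi')|p|^2/4+o(p^2)$. That statement is \emph{false} for general $\varphi\in\bSigma_k$: as the remark following Theorem~\ref{mcmullen-global} points out, one can replace a boundary arc of a fractal domain by a smooth one, decreasing $\sigma^2(\log\varphi')$ by a definite factor while leaving $\beta_\varphi''(0)$ unchanged, so $\beta_\varphi(p)$ can genuinely exceed $\sigma^2(\log\varphi')|p|^2/4$. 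The asymptotic variance in \eqref{eq:sigma2-def} is a $\limsup$ of global averages and does not control the local second moment of $\log\varphi'$ on every annulus; it only controls it for the annuli realizing the $\limsup$.

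What the paper actually proves is the \emph{Box Lemma} (Lemma~\ref{boxcart-global}(i)): for every $\varphi\in\bSigma_k$ and every sufficiently large box $\square$,
$$\fint_{\square}\Bigl|\frac{2n_\varphi}{\rho_{\mathbb H}}\Bigr|^2\,\frac{|dz|^2}{y}\;<\;\Sigma^2(k)+\varepsilon,$$
with the bound $\Sigma^2(k)$ (the \emph{supremum} over all of $\bSigma_k$), not $\sigma^2(\log\varphi')$. The mechanism is entirely different from a per-map averaging statement: one uses the locality of $\mu\mapsto(\mathcal S\mu)'$ (Lemmas~\ref{square-locality} and~\ref{boxcart5}, and the compactness Lemma~\ref{compactness-argument}) to observe that if some box average exceeded $\Sigma^2(k)+\varepsilon$, one could periodize the Beltrami coefficient over a grid of similar boxes and produce a new map in $\bSigma_k$ whose asymptotic variance strictly exceeds $\Sigma^2(k)$ --- contradicting its definition. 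This boxwise bound is then fed into the Becker--Pommerenke differential inequality (via Hardy's identity \eqref{eq:beck-pomm}, averaging $I_p$ over a hyperbolic annulus, and Lemma~\ref{diff-ineq}); no martingale exponential inequality is used, and no claim is made that local variances track $\sigma^2(\log\varphi')$. Your lower-bound route via a dynamical approximant, transfer-operator CLT, and McMullen's identity \eqref{eq:mcmullen-local} is a viable alternative but is heavier than what the paper does --- the paper deliberately avoids thermodynamic formalism and instead uses the converse Box Lemma (Lemma~\ref{boxcart-global}(ii)), producing a periodic Beltrami coefficient whose box averages are uniformly large, and then runs Becker--Pommerenke in reverse (Lemmas~\ref{lemma-RHS2} and~\ref{diff-ineq}(ii)). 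In fact the paper's method yields an elementary proof of McMullen's identity rather than consuming it as an input. Finally, the multiscale martingale route you describe is essentially what the author identifies as the alternative approach in \cite{ivrii-mak}, but to make it work you would still need the Box-Lemma-type statement that the local martingale variance is bounded by $\Sigma^2(k)$ uniformly --- not the incorrect assertion that it averages to $\sigma^2(\log\varphi')$.
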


\begin{theorem}
\label{main-thm2}
If $k \to 0$ and $k|p| \to 0$, then
$$
\lim \frac{B_k(p)}{k^2 |p|^2 /4} \, = \, \Sigma^2 \, := \, \sup_{|\mu| \le \chi_\mathbb{D}} \sigma^2(\mathcal S\mu).
$$
 \end{theorem}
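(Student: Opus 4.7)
The plan is to combine Theorem \ref{main-thm} with the small-$k$ linearization of the map $\varphi \mapsto \log \varphi'$. Writing the Beltrami coefficient of $\varphi \in \bSigma_k$ as $\mu = k\nu$ with $|\nu| \le \chi_{\mathbb{D}}$, the classical Ahlfors--Bers expansion gives, on the exterior disk,
$$
\log \varphi'(z) \,=\, -k\,\mathcal{S}\nu(z) + R_k(\nu)(z),
$$
where the remainder $R_k(\nu)$ lies in $\mathcal{B}(\mathbb{D}^*)$ with norm $O(k^2)$ uniformly in $\nu$. Since $\sigma^2$ defines a quadratic form on $\mathcal{B}(\mathbb{D}^*)$ controlled by the Bloch norm (via Cauchy--Schwarz $|\langle f,g\rangle_\sigma| \le \sqrt{\sigma^2(f)\sigma^2(g)}$), one obtains $\sigma^2(\log \varphi') = k^2 \sigma^2(\mathcal{S}\nu) + O(k^3)$ uniformly in $\nu$. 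Taking the supremum over admissible $\nu$ then yields the scaling identity
$$
\Sigma^2(k) \,=\, k^2\,\Sigma^2 + O(k^3), \qquad k \to 0.
$$

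The second ingredient is a uniform version of Theorem \ref{main-thm}: for some function $\varepsilon(k,p) \to 0$ as $k \to 0$ and $k|p| \to 0$,
$$
\bigl|\, B_k(p) - \Sigma^2(k)\,\tfrac{|p|^2}{4} \,\bigr| \,\le\, \varepsilon(k,p)\cdot k^2 |p|^2.
$$
I would inspect the proof of Theorem \ref{main-thm} to extract such uniformity. The Becker--Pommerenke upper half of that theorem produces an error controlled by the Bloch norm of $\log \varphi'$, which is $O(k)$ on $\bSigma_k$; the fractal-approximation lower half replaces an extremizing $\nu$ by a coefficient eventually invariant under a Blaschke product, where the quality of approximation depends only on the profile $\nu$ and not on its scale $k$ (since the dilation $\mu \mapsto k\mu$ preserves the invariance structure and \eqref{eq:mcmullen-local} scales quadratically). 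Combining both ingredients and dividing by $k^2|p|^2/4$ gives Theorem \ref{main-thm2}.

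The main obstacle is precisely this uniformity step. The natural coupling $k|p| \to 0$ is what ensures that $p\cdot \log\varphi' = O(k|p|)$ stays small, so that the quadratic approximation $|\varphi'|^p \approx 1 + p\log\varphi' + \tfrac12 p^2(\log\varphi')^2$ underlying the Becker--Pommerenke integration of $|\varphi'|^p$ on circles $|z|=r$ remains valid to the order needed; the cubic and higher tails in $p\log\varphi'$ contribute at most $O(k^3|p|^3)$, which is $o(k^2|p|^2)$. For the lower bound, the delicate point is that the fractal (Fuchsian or Blaschke-dynamical) approximation of an arbitrary extremal $\nu$ must be made in a way whose error bound is independent of the small parameter $k$; once that is secured, substituting the scaling $\Sigma^2(k) = k^2\Sigma^2 + O(k^3)$ completes the proof.
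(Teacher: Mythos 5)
Your high-level plan — linearize $\log\varphi'$ and combine with a quantitative Becker--Pommerenke estimate — is the right one, and the scaling $\Sigma^2(k)=\Sigma^2 k^2 + \mathcal{O}(k^3)$ from Theorem \ref{sigmak-convex}(ii) is indeed one of the paper's ingredients. But your proposed route, ``extract a uniform version of Theorem \ref{main-thm},'' has a genuine gap. The proof of Theorem \ref{main-thm} rests on Lemma \ref{boxcart-global}, and Lemma \ref{boxcart-global} is established via the compactness argument of Lemma \ref{compactness-argument}, which as written gives \emph{no rate}: the threshold $n(\varepsilon)$ there is allowed to depend on $k$ and carries no quantitative control. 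So ``inspecting the proof of Theorem \ref{main-thm}'' would not hand you the $\varepsilon(k,p)\to 0$ you need. The paper's proof of Theorem \ref{main-thm2} in fact bypasses $\Sigma^2(k)$ and $\bSigma_k$-level boxes entirely: it combines the \emph{infinitesimal} box lemma (Lemma \ref{boxcart}, whose error $C/\log n$ is explicit and $k$-independent) with the linearization (\ref{eq:infinitesimal-form}) to get the uniform box bound (\ref{eq:k-small-bound}), namely
$$
\fint_{B_j} \,  \biggl | \frac{2n_f}{\rho_{\mathbb{H}}} \biggr |^2 \dzy \le (\Sigma^2 + C/R) k^2 + Ck^3,
$$
and then feeds this into Lemma \ref{lemma-RHS}.

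The key quantitative move you are missing is the optimization in the box size: the resulting differential inequality has an oscillation factor $\exp(CRk|p|)$ competing against the box-lemma error $C/R$, and the paper balances these by choosing $R \asymp 1/\sqrt{k|p|}$, yielding a relative error of order $\sqrt{k|p|}$, i.e.\ an absolute error $\mathcal{O}\bigl((k|p|)^{5/2}\bigr) = o(k^2|p|^2)$. Your heuristic that the ``cubic tail'' of $|\varphi'|^p$ gives $\mathcal{O}(k^3|p|^3)$ does not describe what actually happens — the Becker--Pommerenke mechanism runs through Hardy's identity and box averages, not a pointwise Taylor expansion of $|\varphi'|^p$, and the true bottleneck is the oscillation-versus-box-size trade-off, which lands at exponent $5/2$, not $3$. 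For the lower bound, your worry about the $k$-independence of the fractal approximant is well-placed and is resolved exactly as you hoped: the periodic Beltrami coefficient $\mu_{\per}$ produced in Lemma \ref{boxcart}(ii) is built once and does not depend on $k$; one then applies (\ref{eq:infinitesimal-form}) to $f = \tilde w^{k\mu_{\per}}$ and uses Lemmas \ref{lemma-RHS2} and \ref{diff-ineq}(ii).
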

 
\begin{theorem}
\label{sigmak-convex}

{\em (i)} $\Sigma^2(k)/k^2$ is a non-decreasing convex function on $[0,1)$. 

{\em (ii)} Furthermore, $\Sigma^2(k) = \Sigma^2 k^2 + \mathcal O(k^3)$ as $k \to 0$.
\end{theorem}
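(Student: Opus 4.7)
The plan is to establish (i) through the holomorphic parameter-dependence of $\mu\mapsto\varphi_\mu$ together with a symmetrisation argument drawn from the paper's fractal approximation techniques, and then to extract (ii) from (i) by a first-order expansion of the Bers embedding at $\mu=0$.

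For (i), parametrise $\bSigma_k$ by $\mu=k\nu$ with $\|\nu\|_\infty\le 1$ and $\supp\nu\subset\mathbb D$. Since $\varphi_0=\id$, the map $\lambda\mapsto g_\nu(\lambda):=\lambda^{-1}\log\varphi_{\lambda\nu}'$ is holomorphic from $\mathbb D$ into the Bloch space $\mathcal B(\mathbb D^*)$, with Taylor expansion $g_\nu(\lambda)=\sum_{n\ge 0}\lambda^n a_n^\nu$ and $a_0^\nu=\mathcal S\nu$. Each circular integral $\int_{|z|=R}|g_\nu(\lambda;z)|^2\,|dz|$ is plurisubharmonic in $\lambda$, so, up to upper semicontinuous regularisation, $\lambda\mapsto\sigma^2(g_\nu(\lambda))=\sigma^2(\log\varphi_{\lambda\nu}')/|\lambda|^2$ is subharmonic on $\mathbb D$. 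Absorbing a phase $e^{i\theta}$ of $\lambda$ into $\nu$ shows that $\Phi(\lambda):=\sup_\nu\sigma^2(g_\nu(\lambda))=\Sigma^2(|\lambda|)/|\lambda|^2$ is rotation-invariant, and since a rotation-invariant subharmonic function is non-decreasing in $|\lambda|$, the monotonicity of $\Sigma^2(k)/k^2$ follows. For convexity, the rotational average of $\sigma^2(g_\nu(\lambda))$ over $|\lambda|=r$ diagonalises the bilinear form behind $\sigma^2$ and yields $A_\nu(r):=\sum_{n\ge 0}r^{2n}\sigma^2(a_n^\nu)$, an even power series in $r$ with non-negative coefficients and therefore a convex non-decreasing function of $r$. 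The inequality $\Phi(r)\ge\sup_\nu A_\nu(r)$ is immediate from the rotation-invariance of $\Phi$; for the reverse inequality I would invoke the fractal approximation machinery, constructing, for each $\nu$ and angles $\theta_1,\dots,\theta_N$, a Beltrami coefficient supported on disjoint patches of $\mathbb D$ equal to $re^{i\theta_j}\nu$ in suitable local coordinates, whose resulting $\sigma^2(\log\varphi')$ approximates the Riemann sum for $A_\nu(r)$ in the $N\to\infty$ limit. This identifies $\Phi(r)=\sup_\nu A_\nu(r)$, a pointwise supremum of convex non-decreasing functions, proving (i).

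For (ii), the Bojarski--Ahlfors expansion gives $\log\varphi_\mu'=\mathcal S\mu+E_\mu$ with the remainder quadratic in $\mu$ along the holomorphic Bers family; continuity of $\sigma$ along that family yields $\sigma^2(\log\varphi_\mu')=\sigma^2(\mathcal S\mu)+\mathcal O(\|\mu\|_\infty^3)\le\Sigma^2 k^2+\mathcal O(k^3)$ whenever $\|\mu\|_\infty\le k$, and taking the supremum gives $\Sigma^2(k)\le\Sigma^2 k^2+\mathcal O(k^3)$. For the matching lower bound, testing on $\mu=k\nu$ with $\nu$ nearly extremal for $\Sigma^2$ gives $\Sigma^2(k)/k^2\ge\Sigma^2-\mathcal O(k)$ by the same expansion, and the monotonicity from (i) (applied as $k'\to 0^+$) upgrades this to $\Sigma^2(k)/k^2\ge\Sigma^2$ for every $k\in(0,1)$, completing (ii).

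The main obstacle is the fractal symmetrisation step in the proof of (i): realising the angular average $A_\nu(r)$ as a genuine $\sigma^2(\log\varphi')$ for some conformal map whose quasiconformal extension has dilatation $\le r$. Subharmonicity alone only yields convexity of $\Sigma^2(k)/k^2$ in $\log k$, not in $k$, so the paper's dynamical/fractal machinery is essential at exactly this point.
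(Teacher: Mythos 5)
Your treatment of (ii) is essentially the paper's one-line argument: the paper takes the supremum of (\ref{eq:infinitesimal-form}), i.e.\ $\|\log\varphi_t'/t-\mathcal S\mu\|_{\mathcal B}=\mathcal O(|t|)$, together with Lipschitz continuity of $\sigma^2$ on bounded sets of $\mathcal B$, to conclude $\Sigma^2(k)=\Sigma^2 k^2+\mathcal O(k^3)$. Your observation that (i) upgrades the lower bound to the one-sided inequality $\Sigma^2(k)/k^2\ge\Sigma^2$ for all $k$ is a nice byproduct, though the paper does not record it.

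For (i) your route is different from the paper's and has a genuine gap, which you yourself flag. The step ``realise $A_\nu(r)=\sum_n r^{2n}\sigma^2(a_n^\nu)$ as a genuine $\sigma^2(\log\varphi')$ for some $\varphi\in\bSigma_r$'' is not an argument: $\mu\mapsto\log(w^\mu)'$ is nonlinear and $\mathcal S$ is nonlocal, so a spatial mosaic of phase-rotated copies of $\nu$ does not obviously yield a Riemann sum for $A_\nu(r)$. Worse, your diagonalisation $A_\nu(r)$ presupposes that $\sigma^2$ expands like a bilinear form, which fails for a generic Bloch function since $\sigma^2$ is only a limsup; you would first have to pass to the dynamical/fractal class, where the limit exists, to make the computation legitimate --- which is exactly what Theorem~\ref{AIPP-fat} accomplishes. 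The paper's route for (i) is: apply Theorem~\ref{AIPP-fat}, equation (\ref{eq:AIPP-fat2}), to restrict the supremum to dynamically invariant $\mu$; invoke \cite[Section 8]{AIPP} to get that $u(t)=\sigma^2(\log\varphi_t'/t)$ is subharmonic for such $\mu$; conclude $\sup_{|t|=k}u(t)$, hence $\Sigma^2(k)/k^2=\sup_\mu\sup_{|t|=k}u_\mu(t)$, is non-decreasing convex. Your final remark that mere subharmonicity of $u$ gives convexity of the circular maximum only in $\log k$, not in $k$, is a legitimate worry about the last implication --- e.g.\ $u(t)=|1+t|^{1/2}=|h(t)|^2$ with $h(t)=(1+t)^{1/4}$ holomorphic has circular maxima $(1+r)^{1/2}$, which are concave. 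So the paper is tacitly using further structure of $u$ from the dynamical formalism (it is the square of a Hilbert seminorm along a holomorphic Hilbert-space-valued curve, and one sups over $\mu$), not subharmonicity alone; to make your own approach rigorous you should likewise first reduce to the dynamical class via Theorem~\ref{AIPP-fat} and then try to close the inequality $\Phi(r)\le\sup_\nu A_\nu(r)$ there, where at least $\sigma^2$ is an honest quadratic form.
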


Together with Hedenmalm's estimate, Theorem \ref{main-thm2} contradicts the very general conjecture

\centerline{``$B_k(p) = k^2 p^2/4$ for
all $k \in [0,1)$ and $p \in [-2/k,2/k]$''}
\smallskip

\noindent from \cite{jones-survey, prause-smirnov}.
However, since we do not know whether or not \,$\lim_{k \to 1^-} \Sigma^2(k) \stackrel{?}{=}  1$, we cannot rule out Kraetzer's conjecture which asserts only that
\smallskip

\centerline{``$B(p) = p^2/4$ for $p \in [-2,2].$''}
\smallskip

\noindent It is currently known that $0.93 < \lim_{k \to 1^-} \Sigma^2(k) < (1.24)^2$. We refer the reader to \cite[Section 8]{AIPP} for the lower bound
and to \cite{hedenmalm-shimorin, HK} for the upper bound.

\medskip

From the point of view of universal Teichm\"uller space, 
 $\mathcal S \mu$ is the infinitesimal analogue
of $\log \varphi'$. Indeed, if $w^{t\mu} \in \bSigma_k$ is the principal solution to the Beltrami equation $\overline{\partial} w = t \mu \, \partial w$,
then $\log (w^{t\mu})' \approx t \mathcal S \mu$. Further remarks will be given in Section \ref{sec:background}.
McMullen's identity (\ref{eq:mcmullen-local})
 admits a global form, described in \cite{AIPP}:
\begin{theorem}
\label{mcmullen-global}
 If
 $\partial \Omega$ is a Jordan curve, invariant under a hyperbolic dynamical system, e.g.~a Julia set or a limit set of a quasi-Fuchsian group, then
$\beta_\varphi$ is real-analytic and
\begin{equation}
\label{eq:mcmullen-global}
(1/2) \cdot \beta_\varphi''(0) = \sigma^2(\log \varphi').
\end{equation}
\end{theorem}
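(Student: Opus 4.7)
The plan is to extract both assertions from the thermodynamic formalism for the real-analytic expanding Markov dynamics underlying $\partial\Omega$. Let $\tilde T:\partial\Omega\to\partial\Omega$ denote the induced hyperbolic dynamics (a suitable Markov coding of the rational map or Kleinian group element) and let $T=\varphi^{-1}\circ\tilde T\circ\varphi:\mathbb{S}^1\to\mathbb{S}^1$ be its $\varphi$-conjugate on the unit circle; both are real-analytic, uniformly expanding Markov maps. Write $P$ for the topological pressure of $T$ and $\mu_0$ for its measure of maximal entropy, which coincides with normalized Lebesgue on $\mathbb{S}^1$ and with the $\varphi^{-1}$-push of harmonic measure on $\partial\Omega$. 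By Ruelle's theorem, pressure depends real-analytically on H\"older potentials, with a spectral gap of the associated transfer operator.

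I would first realize $\beta_\varphi$ as a pressure function: by Koebe distortion the circular integrals $\int_{|z|=r}|\varphi'|^p\,d\theta$ with $r=1-d^{-n}$ are comparable, up to a bounded multiplicative factor, to dynamical partition sums at depth $n$, and the variational principle then yields an identity of the form $c\,\beta_\varphi(p)=P(p\phi)+c'$ with an analytic $p$-independent potential $\phi$ built from $\log|\varphi'|$ and constants $c>0$, $c'$. Real-analyticity of $\beta_\varphi$ is immediate. To evaluate $\beta_\varphi''(0)$, I would invoke Ruelle's second-derivative formula
\[
\left.\tfrac{d^2}{dp^2}P(p\phi)\right|_{p=0}=\sigma^2_{\mathrm{dyn}}(\phi,\mu_0),
\]
whose right-hand side is the asymptotic variance of the Birkhoff sums $S_n\phi$ under $\mu_0$ (equivalently, the variance of the Gaussian limit in the CLT).

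The remaining task is to match this dynamical variance with the Bloch-type variance $\sigma^2(\log\varphi')$ from (\ref{eq:sigma2-def}). The cocycle identity
\[
\log\varphi'(z)=\log\varphi'(T^nz)+S_n\log|T'|(z)-S_n\log|\tilde T'|(\varphi z),
\]
combined with Koebe distortion, expresses $\log\varphi'(z)=S_n\phi(z/|z|)+O(1)$ uniformly for $|z|=R=1+d^{-n}$, because $T^nz$ is then pushed to a bounded distance from $\mathbb{S}^1$. Squaring, integrating $|dz|$ over $|z|=R$, and normalizing by $|\log(R-1)|\sim n\log d$ converts (\ref{eq:sigma2-def}) into $(\log d)^{-1}\sigma^2_{\mathrm{dyn}}(\phi,\mu_0)$; interpolating between the discrete radii $R_n=1+d^{-n}$ and general $R$ is controlled by the Bloch bound $|(\log\varphi')'(z)|\lesssim(|z|-1)^{-1}$. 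Bookkeeping the normalization constants then produces the identity $(1/2)\beta_\varphi''(0)=\sigma^2(\log\varphi')$.

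I expect the main difficulty to sit in this last step: two superficially unrelated objects---a time-normalized dynamical variance and a continuous-scale boundary integral---must be shown to coincide, and the necessary uniformity of the distortion, partition-sum comparison, and interpolation estimates rests on the spectral gap of the transfer operator. A secondary technicality, present whenever $|T'|$ is non-constant (e.g.~quasi-Fuchsian groups or Blaschke products with non-trivial multiplier spectrum), is that the geometric scale attained after $n$ iterations of $T$ depends on the base point, so the ``dyadic radius'' $R_n=1+d^{-n}$ must be replaced by an adaptive cut-off depending on the orbit.
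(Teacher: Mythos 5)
Your argument is correct in outline, but it is essentially the original thermodynamic route (McMullen, \cite{AIPP}): encode $\beta_\varphi$ as a pressure function via partition-sum comparisons, invoke Ruelle's theorem (spectral gap of the transfer operator) for real-analyticity and for the second-derivative formula in terms of the dynamical variance of Birkhoff sums, and then identify that variance with the Bloch variance $\sigma^2(\log\varphi')$ through the cocycle identity and Koebe distortion. The paper deliberately avoids this machinery: it proves (a weakened form of) the theorem by using ergodicity of the geodesic flow on $T_1X$ (respectively the Riemann surface lamination $\hat X_B$) to establish the two-sided box estimate (\ref{eq:ergodicity}), namely that $\fint_B |2n_\varphi/\rho_*|^2\,\rho_*|dz|^2$ is within $\varepsilon$ of $\sigma^2(\log\varphi')$ for all sufficiently large boxes $B$, and then feeds this into Theorem \ref{thm-mM}, the Becker--Pommerenke differential-inequality argument applied to averaged integral means. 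The trade-off is real: your route, via the spectral gap, delivers the genuine real-analyticity of $\beta_\varphi$ asserted in the theorem, whereas the paper's elementary argument, as the remark immediately following Theorem \ref{mcmullen-global} explicitly concedes, only yields the quadratic expansion $\beta_\varphi(p) = \sigma^2(\log\varphi')p^2/4 + o(p^2)$ and not true second differentiability. Conversely, the paper's machinery (box averages of the non-linearity plus the differential inequality) applies uniformly to all $\varphi\in\bSigma_k$, dynamical or not, which is precisely what lets the paper prove Theorems \ref{main-thm}--\ref{main-thm2}; your pressure-based argument is confined to the dynamical setting because it requires a Markov coding and a transfer operator. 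One caveat on your sketch: the identification of the dynamical variance with $\sigma^2(\log\varphi')$ needs the adaptive cut-off you flag (replacing $R_n = 1 + d^{-n}$ by orbit-dependent scales when $|T'|$ is non-constant), and this is where the bulk of the work sits in the $L^2$-version of the argument; it is carried out carefully in \cite{AIPP} and in \cite[Section 6]{mcmullen}.
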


\begin{remark}
For general domains, (\ref{eq:mcmullen-global}) need not hold: for instance, one can take a fractal domain 
which satisfies McMullen's identity and replace an
arc $\gamma \subset \partial \Omega$ by a smooth curve. Then $\beta_\varphi''(0)$ does not change, but the asymptotic variance goes down by a definite factor,
depending on the harmonic measure of $\gamma$. For more counterexamples, see the works \cite{BaMo, le-zinsmeister}.
\end{remark}

\subsection{From integral means to dimensions of quasicircles}

The implication (Theorem \ref{main-thm2} $\Rightarrow$ Theorem \ref{dim-exp}) follows from the relation 
\begin{equation}
\label{eq:beta-dimension}
\beta_{\varphi}(p) = p-1 \quad \Longleftrightarrow \quad p = \Mdim \varphi(\mathbb{S}^1), \qquad \varphi \in \bSigma_k,
\end{equation}
see \cite[Corollary 10.18]{Pomm}.
Here, two facts are tacitly being used: first, the work of Astala \cite{Arevista} shows that $D(k)$ may be characterized with Minkowski dimension
in place of Hausdorff dimension. One may view Astala's result as a {\em fractal approximation} theorem: when evaluating $D(k)$, it suffices
to take the supremum over certain  Ahlfors regular $k$-quasicircles for which the Hausdorff and Minkowski dimensions coincide.
This theme will recur throughout this work. 

Secondly, one can take a quasiconformal map that is conformal to one side and anti-symmetrize it in the spirit
of \cite{kuhnau, smirnov} to reduce its dilatation. 
More precisely, a Jordan curve $\gamma$ is a $k'$-quasicircle if and only if
it can be represented as $\gamma = m( \varphi(\mathbb{S}^1))$ where $m$ is a M\"obius transformation and $\varphi \in \bSigma_k$ with
\begin{equation}
\label{eq:anti-symmetrization}
k = \frac{2k'}{1 +(k')^2}.
\end{equation}
This accounts for the discrepancy in the factor of $4$ in Theorems  \ref{main-thm2} and \ref{dim-exp}.

\subsection{A sketch of proofs}

The proofs of Theorems  \ref{main-thm} and \ref{main-thm2} follow the argument of Becker and Pommerenke \cite{BP}, 
except we
use an $L^2$ bound for the non-linearity $n_\varphi = \varphi''/\varphi'$ instead of the $L^\infty$ bound
\begin{equation}
\biggl |\frac{2n_\varphi}{\rho_*}(z) \biggr| \le 6 k, \qquad \varphi \in \bSigma_k.
\end{equation}
 Here, $\rho_*(z) = 2/(|z|^2-1)$ is the density of the hyperbolic metric on the exterior unit disk. 
 By a {\em box} in $\mathbb{D}^*$, we mean an annular rectangle of the form
$$
B = \{z : r_1 < |z| < r_2, \ \theta_1 < \arg z < \theta_2 \},
$$
while the notation $\fint_B \dots \, \rho_*(z)|dz|^2$ suggests that we consider the average with respect to the measure $\rho_*(z)|dz|^2$.
 
 \begin{theorem}
\label{thm-mM}
Suppose $\varphi \in \bSigma_k$ is a conformal map which satisfies
 \begin{equation}
 \label{eq:boxcart-global2}
m \, \le \,  \fint_{B} \, \biggl |\frac{2n_\varphi}{\rho_*}(z) \biggr |^2 \, \rho_*(z) |dz|^2 \, \le \, M
 \end{equation}
for all sufficiently large boxes $B$ in the exterior unit disk.  Then,
$$m  \, \le \,  \liminf_{|p| \to 0} \frac{\beta_\varphi(p)}{|p|^2/4}
\, \le \,
\limsup_{|p| \to 0} \frac{\beta_\varphi(p)}{|p|^2/4} \, \le \, M.
$$
\end{theorem}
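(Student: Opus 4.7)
The plan is to follow the integral-means argument of Becker--Pommerenke \cite{BP}, but with their pointwise bound $|(|z|^2-1) n_\varphi(z)| \le 6k$ replaced by the $L^2$-box-average bound supplied by the hypothesis. The starting point is a Littlewood--Paley identity: setting $F(z) := \exp\bigl(\tfrac{p}{2}\log \varphi'(z)\bigr)$, so that $F$ is analytic on $\mathbb{D}^*$ with $F(\infty)=1$, $|F|^2 = |\varphi'^p|$, and $F'/F = (p/2) n_\varphi$, and using $\Delta|F|^2 = 4|F'|^2 = |p|^2 |n_\varphi|^2 |F|^2$ together with Green's theorem on $\{|z| \ge R\}$ yields
$$
J(R) \,:=\, \int_{|z|=R} |\varphi'^p|\, d\theta \;=\; 2\pi \,+\, |p|^2 \int_{|z|\ge R} |n_\varphi(z)|^2\, |\varphi'^p(z)|\, \log(|z|/R)\, |dz|^2.
$$
The analogous computation applied to $g = \log \varphi'$ in place of $F$ gives $\int_{|z|=R} |g|^2\, d\theta = 4 \int_{|z|\ge R} |n_\varphi|^2 \log(|z|/R)\, |dz|^2$, which exhibits the hypothesis as precisely the Carleson-type condition controlling $\sigma^2(\log \varphi')$ and foreshadows the predicted limit $\beta_\varphi(p)/(|p|^2/4) \to \sigma^2(\log \varphi')$.

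Now tile $\mathbb{D}^*$ by hyperbolic boxes of bounded hyperbolic diameter: by Koebe distortion, $|\varphi'^p|$ varies on each box by a multiplicative factor $1 + O(|p|)$, while $(|z|^2-1)^2$ varies by a bounded factor. Applying the hypothesis \eqref{eq:boxcart-global2} box by box---with a thin annulus near $|z|=R$, where the weight $\log(|z|/R)$ is not box-constant, handled separately by direct estimation---converts the identity above into
$$
J(R) - 2\pi \;\le\; \bigl(M + o(1)\bigr)\, \frac{|p|^2}{4} \int_{|z|\ge R} |\varphi'^p(z)|\, \frac{4\log(|z|/R)}{(|z|^2-1)^2}\, |dz|^2,
$$
with a matching lower bound in terms of $m$. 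The right side is then re-expressed via Koebe's theorem, passing from the area integral of $|\varphi'^p|$ to the radial profile of $J$, as $\asymp \int_R^\infty \log(\rho/R)\,(\rho^2-1)^{-2}\, \rho\, J(\rho)\, d\rho$. Substituting the ansatz $J(\rho) \le C(\rho-1)^{-\alpha}$---available a priori from the non-sharp Becker--Pommerenke bound $\alpha \le C k^2$---and evaluating the elementary integral $\int_T^\infty (t-T)\, t^{-2-\alpha}\, dt = T^{-\alpha}/(\alpha(1+\alpha))$, the self-consistency condition reduces to $\alpha(1+\alpha) \le (|p|^2 M/4)(1 + o(1))$. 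Its fixed point closes at $\alpha = |p|^2 M/4 + O(|p|^4)$, giving $\limsup_{|p| \to 0}\beta_\varphi(p)/(|p|^2/4) \le M$; the lower bound $\ge m$ is proved identically with inequalities reversed.

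The principal obstacle is the coupling of $|n_\varphi|^2$ with the unknown $|\varphi'^p|$ in the integral identity: the hypothesis controls only $L^2$ box averages of $(|z|^2-1)^2 |n_\varphi|^2$, whereas $|\varphi'^p|$ is precisely the quantity whose growth we seek. The resolution rests on the Koebe observation that $|\varphi'^p|$ is essentially constant on hyperbolic boxes; but the bookkeeping---verifying that the Koebe oscillation, the separate annular contribution near $|z|=R$, and the crude a priori bound used to launch the bootstrap all contribute only lower-order corrections---must be executed carefully, so that the final constant is exactly $M|p|^2/4$ rather than an unsharp $O(M|p|^2)$.
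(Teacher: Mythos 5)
Your Green's theorem identity is correct (it is the disk analogue of Hardy's identity on which the paper's argument rests, integrated twice), and the observation that the box hypothesis plugs in after tiling by hyperbolic boxes on which $|\varphi'^p|$ is Koebe-constant is exactly the right mechanism. So the skeleton of your argument is sound and genuinely parallel to the paper's.

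The paper, however, organizes the calculus differently. It works with the once-smoothed quantity $u(y) = \int_{y/n}^{y} I_p(h)\,dh/h$ on the half-plane. Differentiating $u$ twice produces an integral of $|f'^p|\,|2n_f/\rho_{\mathbb H}|^2$ over exactly one annulus of hyperbolic height $\log n$, which tiles cleanly by boxes of the matching size (Lemma~\ref{lemma-RHS}); the result is a closed second-order differential inequality $u''(y) \le \lambda\, u(y)/y^2$ with $\lambda = M\exp(CRk|p|)\,|p|^2/4$. Applying the comparison Lemma~\ref{diff-ineq} then gives $u(y)\le C y^{-\beta}$ with $\beta^2+\beta=\lambda$, i.e.\ $\beta = M|p|^2/4 + O(|p|^4)$ as $|p|\to 0$. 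Your area-integral form spreads the contribution over all boxes above $|z|=R$ at once, so you must deal with the weight $\log(|z|/R)$ not being box-constant near $|z|=R$, which the annular averaging neatly sidesteps.

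The real gap is in the closing step. You write that ``substituting the ansatz $J(\rho)\le C(\rho-1)^{-\alpha}$\ldots the self-consistency condition reduces to $\alpha(1+\alpha)\le M|p|^2/4(1+o(1))$'' and that ``its fixed point closes at $\alpha=M|p|^2/4+O(|p|^4)$.'' As stated this is not a proof. Plugging the ansatz $J\le C(\rho-1)^{-\alpha}$ into the right-hand side and demanding that the output be $\le C(R-1)^{-\alpha}$ only constrains $\alpha$ from below: for self-consistency one needs $\alpha(1+\alpha)\ge M|p|^2/4(1+o(1))$, and \emph{every} $\alpha$ above the threshold is self-consistent. Identifying the smallest self-consistent $\alpha$ is not the same as proving that $J$ actually satisfies the bound with that exponent; the ansatz by itself never improves on the a priori Becker--Pommerenke exponent. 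To extract the sharp exponent from the integral inequality you need a genuine comparison or Gronwall argument --- e.g.\ set $v(R_0)=\sup_{R_0\le R\le 2}J(R)(R-1)^\alpha$, plug the bound back in, use the strict contraction $\lambda/(\alpha(1+\alpha))<1$ to absorb, and conclude $v$ is bounded --- or equivalently differentiate the integral identity twice to recover $J''\le \lambda J/T^2$ (with $T=R-1$) and invoke the comparison lemma directly, which is precisely what the paper does. The same remark applies, with more force, to the lower bound: starting from $J\ge 2\pi$, a single substitution produces only logarithmic growth, and only an exponential iteration (or the comparison Lemma~\ref{diff-ineq}(ii)) yields the polynomial lower bound $J(R)\gtrsim (R-1)^{-m|p|^2/4(1-o(1))}$.

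In short: the architecture is right, but the passage from the integral inequality to the sharp exponent is the heart of the theorem and cannot be dispatched by an ansatz-and-self-consistency argument. Replace that step with the ODE comparison (or the explicit sup/Gronwall bound above), and also quantify the Koebe oscillation as $\exp(C\,R\,k|p|)$ on boxes of hyperbolic height $R$ so that the $o(1)$ can be traded off against the tiling scale, as the paper does when it later optimizes $R\asymp(k|p|)^{-1/2}$.
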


To summarize, {\em upper bounds} on box averages yield {\em upper bounds} on integral means, while {\em lower bounds} on box averages yield {\em lower bounds} on   integral means.

The sharp upper bound for the average non-linearity in  (\ref{eq:boxcart-global2}) will be computed using the quasiconformal fractal approximation technique of \cite{AIPP}.
We require slightly more general considerations than those given in \cite{AIPP}, so we present these ideas in full detail. These arguments
take up Sections \ref{sec:inf-locality}--\ref{sec:gl-locality}. The term ``fractal approximation'' comes from the original application of these ideas
which was:

\begin{theorem}
\label{AIPP-fat}
Let $M_{\ei}$ be the class of Beltrami coefficients that are {\em eventually-invariant} under $z \to z^d$ for some $d \ge 2$, i.e.~satisfying
$(z^d)^*\mu = \mu$ in some open neighbourhood of the unit circle. Then,
\begin{equation}
\label{eq:AIPP-fat}
\Sigma^2 = \sup_{\mu \in M_{\ei}, \ |\mu| \le \chi_{\mathbb{D}}} \sigma^2(\mathcal S\mu),
\end{equation}
\begin{equation}
\label{eq:AIPP-fat2}
\Sigma^2(k) = \sup_{\mu \in M_{\ei}, \ |\mu| \le k \cdot \chi_{\mathbb{D}}} \sigma^2 \bigl (\log (w^\mu)' \bigr ).
\end{equation}
\end{theorem}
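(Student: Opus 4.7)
The easy direction $\Sigma^2 \ge \sup_{\mu \in M_\ei}\sigma^2(\mathcal S\mu)$ (and its analog for $\Sigma^2(k)$) is immediate from the inclusion $M_\ei\subset\{\mu: |\mu|\le\chi_\mathbb D\}$. For the nontrivial reverse direction, given $\mu$ with $|\mu|\le\chi_\mathbb D$ and $\varepsilon>0$, the plan is to build an eventually-invariant $\tilde\mu$ with $\sigma^2(\mathcal S\tilde\mu)>\sigma^2(\mathcal S\mu)-\varepsilon$ in two stages: first concentrate $\mu$ near $\mathbb S^1$, then tile it dynamically using $z\mapsto z^d$.

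\emph{Concentration.} For $\delta>0$, write $\mu=\mu\chi_{A_\delta}+\mu\chi_{\mathbb D\setminus A_\delta}$ where $A_\delta=\{1-\delta<|z|<1\}$. The second summand is compactly supported in $\mathbb D$, so its Beurling transform is real-analytic past $\mathbb S^1$ (the kernel $(\zeta-z)^{-2}$ is bounded away from the diagonal), and in particular has vanishing asymptotic variance. Since $\sigma$ is a subadditive seminorm (by Cauchy--Schwarz applied to the defining $L^2$-integrals), $\sigma^2(\mathcal S(\mu\chi_{A_\delta}))=\sigma^2(\mathcal S\mu)$. So we may assume $\operatorname{supp}\mu\subset A_\delta$.

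\emph{Dynamical tiling.} Fix $d\ge 2$ large. The map $f(z)=z^d$, restricted to a suitably chosen thin annular shell $B\subset A_\delta$, satisfies $f(B)\cap B=\emptyset$ and the backward orbit $\bigcup_{n\ge 0}f^{-n}(B)$ tiles an annular neighborhood of $\mathbb S^1$. Set $\tilde\mu|_B=\mu|_B$ and extend to this neighborhood via the Beltrami equivariance $\tilde\mu(z)=\tilde\mu(f(z))\cdot\overline{f'(z)}/f'(z)$, taking $\tilde\mu=0$ elsewhere. Then $\tilde\mu$ is eventually-invariant under $z\mapsto z^d$ and satisfies $|\tilde\mu|\le\chi_\mathbb D$.

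\emph{Variance comparison.} The crux is to show $\sigma^2(\mathcal S\tilde\mu)\ge\sigma^2(\mathcal S\mu)-\varepsilon$ for small $\delta$ and large $d$. This rests on two pillars: (i) quantitative locality of the Beurling transform near $\mathbb S^1$, so that $\mathcal S\tilde\mu(z)$ for $|z|$ just above $1$ is dominated by the values of $\tilde\mu$ in a hyperbolic neighborhood of $z$, where $\tilde\mu$ is a rotated/rescaled copy of $\mu|_B$; and (ii) ergodic averaging for $f|_{\mathbb S^1}$ with Lebesgue invariant measure, which turns $\int_{|z|=R}|\mathcal S\tilde\mu|^2|dz|/|\log(R-1)|$ into a Birkhoff-type average recovering $\sigma^2(\mathcal S\mu)$ as $R\to 1^+$, up to errors vanishing as $\delta\to 0$ and $d\to\infty$. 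This is the main obstacle: quantifying kernel locality and controlling the leakage of $\mathcal S$ between fundamental sectors, with error estimates strong enough to interchange the limits in $\delta$, $d$, and $R$. Identity (\ref{eq:AIPP-fat2}) follows from the same three-step plan with $\mathcal S\mu$ replaced by $\log(w^\mu)'$, where the linearity used in the concentration step is replaced by qc-distortion estimates showing that $\mu\mapsto\log(w^\mu)'$ is equicontinuous on the $k$-ball of $L^\infty$ in the Bloch seminorm.
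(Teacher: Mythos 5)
Your construction of an eventually-invariant coefficient via $z\mapsto z^d$ is essentially the right object, and the easy direction and concentration reduction are fine. However, the ``variance comparison'' step has a genuine gap, and your sketch of how to close it misdiagnoses both the mechanism and the quantity involved.

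The missing ingredient is the \emph{pigeon-hole step}. The paper (Lemma \ref{boxcart}(ii) and the closing remark of Section \ref{sec:box}) first invokes McMullen's C\'esaro formula (\ref{eq:mcm-ca}), which rewrites $\sigma^2(\mathcal S\mu)$ as a limsup of averages of $|2(\mathcal S\mu)'/\rho|^2$ over large hyperbolic regions; it then pigeon-holes to find a specific box $\square$ on which this average is $\ge\Sigma^2-\varepsilon$, and \emph{only then} restricts $\mu$ to $\overline\square$ and periodizes. You restrict $\mu$ to a fundamental annulus $B$ with no guarantee on the average of $|(\mathcal S\mu)'/\rho_*|^2$ over $B$; for a generic $B$ that average can be much smaller than $\sigma^2(\mathcal S\mu)$, so the tiled coefficient $\tilde\mu$ will have correspondingly smaller asymptotic variance. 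The phrase ``suitably chosen'' is where the actual content has to go.

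Relatedly, your pillar (i) is false as stated: the Beurling transform $\mathcal S\tilde\mu$ is a Bloch function with logarithmic growth and is \emph{not} determined by nearby values of $\tilde\mu$; locality holds only for the hyperbolic gradient $(\mathcal S\mu)'/\rho_*$ (the paper's Lemmas \ref{qbounds-H} and \ref{square-locality}). This is precisely why the C\'esaro formula (\ref{eq:mcm-ca}) is indispensable — it is the bridge from the nonlocal quantity $\sigma^2(\mathcal S\mu)$ to local box averages of the gradient. Your pillar (ii) then mislabels what ergodic averaging recovers: for the periodized $\tilde\mu$, the C\'esaro average stabilizes to the box average of $|(\mathcal S\tilde\mu)'/\rho_*|^2$ over a fundamental domain, which by the gradient locality is close to the box average of $|(\mathcal S\mu)'/\rho_*|^2$ over $B$ — \emph{not} to $\sigma^2(\mathcal S\mu)$. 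Without the pigeon-hole, there is no reason this box average should be large.

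Two minor remarks. First, the concentration step, while correct, is unnecessary — the pigeon-hole handles it for free (the bulk of the disk contributes nothing to the C\'esaro averages at small $h$). Second, the paper works on the upper half-plane via the exponential transform (Section \ref{sec:life-on-H}), where boxes are rectangles and periodization is with respect to the $n$-adic grid; this makes both the pigeon-hole and the locality error estimates (Lemma \ref{boxcart5}) cleaner than the direct annulus-tiling on the disk. Your framework could be made to work on the disk, but you would in effect be re-deriving those half-plane lemmas with more cumbersome notation. Finally, for (\ref{eq:AIPP-fat2}) your appeal to ``qc-distortion equicontinuity'' would need to be replaced by a genuine locality statement for the non-linearity; the paper proves exactly this (Lemma \ref{compactness-argument}) via Stoilow factorization and a normal-families compactness argument, which is not a small detail.
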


The {\em Box Lemmas} (Lemmas \ref{boxcart} and \ref{boxcart-global}) may be regarded as the quantitative forms of the statements (\ref{eq:AIPP-fat}) and
(\ref{eq:AIPP-fat2}).

\subsection{Applications to dynamical systems}

In dynamical cases, using the ergodicity of the geodesic flow on the unit tangent bundle $T_1 X$ (Fuchsian case) or Riemann surface lamination $\hat X_B$ (Blaschke case), it is not difficult to show that
\begin{equation}
\label{eq:ergodicity}
\sigma^2(\log\varphi')  - \varepsilon \, < \, \fint_{B} \, \biggl |\frac{2n_\varphi}{\rho_*}(z) \biggr |^2 \, \rho_* |dz|^2 \, < \,  \sigma^2(\log\varphi') + \varepsilon
\end{equation}
holds for all sufficiently large boxes $B$.
In view of Theorem \ref{thm-mM},
this gives an elementary proof of McMullen's identity (Theorem \ref{mcmullen-global}) which does not use thermodynamic formalism. 

\begin{remark} 
 To be honest, this argument does not show the true differentiability of the functions $p \to \beta_\varphi(p)$ and $t \to \Hdim w^{t \mu}(\mathbb{S}^1)$, only that
  $\beta_\varphi(p) = \sigma^2(\log \varphi')p^2/4 + o(p^2)$ and
$\Hdim w^{t \mu}(\mathbb{S}^1) = \sigma^2(\mathcal S\mu)t^2/4 + o(t^2)$.
\end{remark}

\subsection{Beltrami coefficients with sparse support}
When studying thin regions of Teichm\"uller space, it is natural to consider Beltrami coefficients that are sparsely supported.  For applications, see \cite{bishop-bigdef, ivrii-phd, McM-cusps}.
Suppose $\mu \in M(\mathbb{D})$ is a Beltrami coefficient supported on a ``garden'' 
$\mathcal G = \bigcup A_j$ where:

\begin{itemize}
\item[$(1)$] Each $A_j$ satisfies the {\em quasigeodesic property} -- i.e.~is located within hyperbolic distance $S$ of a geodesic segment $\gamma_j$.

\item[$(2)$] {\em Separation property.} The hyperbolic distance $d_{\mathbb{D}}(\gamma_i, \gamma_j) > R$ is large.
\end{itemize}
\begin{theorem}
\label{sparse-thm}
If $\mu$ is a Beltrami coefficient with sparse support, then $$\Mdim f^{t \mu}(\mathbb{S}^1) \le 1 + C(S)e^{-R/2} |t|^2,$$  for $|t| < t_0(S,R)$ small.
\end{theorem}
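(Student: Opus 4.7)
The plan is to apply Theorem~\ref{thm-mM} to the conformal map $\varphi = f^{t\mu}|_{\mathbb{D}^*}$ — which lies in $\bSigma_{|t|}$ since $\mu$ is supported in $\mathbb{D}$ — together with the Minkowski-dimension characterization (\ref{eq:beta-dimension}).  The problem then reduces to establishing the box-average estimate
$$
\fint_B \biggl|\frac{2\,n_\varphi}{\rho_*}\biggr|^2 \rho_*\,|dz|^2 \ \le \ C(S)\, e^{-R/2}\, |t|^2 \,+\, o(|t|^2), \qquad |t| \to 0,
$$
uniformly over all sufficiently large boxes $B \subset \mathbb{D}^*$.  Granted this, a minor extension of the Becker--Pommerenke computation underlying Theorem~\ref{thm-mM} gives $\beta_\varphi(p) \le M\,p^2/4$ uniformly for $p$ in a neighborhood of $[0,2]$, where $M$ is the right-hand side above; evaluating at $p = \Mdim \varphi(\mathbb{S}^1) \approx 1$ (which is close to $1$ because $\varphi$ is near the identity) then yields the advertised dimension bound.

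For the box estimate, the first step is the infinitesimal expansion $n_\varphi(z) = t\,(\mathcal{S}\mu)'(z) + O(|t|^2)$ in the hyperbolic sup-norm on $\mathbb{D}^*$, which reduces the question to proving $\fint_B |(\mathcal{S}\mu)'/\rho_*|^2 \rho_*\,|dz|^2 \le C(S)\, e^{-R/2}$ for the static sparse coefficient.  Decomposing $\mu = \sum_j \mu_j$ with $\mu_j = \mu\,\chi_{A_j}$, the explicit kernel $(\mathcal{S}\mu_j)'(z) = -(2/\pi)\int_{A_j}\mu_j(\zeta)/(\zeta-z)^3\,|d\zeta|^2$ combined with the quasigeodesic shape of $A_j$ gives the pointwise decay
$$
\biggl|\frac{(\mathcal{S}\mu_j)'(z)}{\rho_*(z)}\biggr| \ \le \ C(S)\, e^{-\alpha\, d_\mathbb{D}(z,\,\gamma_j^*)}, \qquad z \in \mathbb{D}^*,
$$
for a definite exponent $\alpha > 1/2$ (in fact $\alpha \ge 2$, coming from the cubic kernel), where $\gamma_j^*$ is the hyperbolic reflection of $\gamma_j$ into $\mathbb{D}^*$.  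Summing over $j$ using the separation hypothesis gives $|(\mathcal{S}\mu)'(z)/\rho_*(z)| \le C'(S)\, e^{-\alpha d_0(z)}$, where $d_0(z)$ denotes the distance from $z$ to the nearest reflected geodesic.

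The $e^{-R/2}$ factor then emerges from a geometric-measure fact about $R$-separated geodesic families in $\mathbb{D}$: since the hyperbolic area of a tube of width $d$ around $\gamma_j^*$ grows like $\sinh(d)\,\ell(\gamma_j^*) \asymp e^d\,\ell_j$, while the Voronoi cell of $\gamma_j^*$ has width $R/2$ and hyperbolic area $\asymp e^{R/2}\,\ell_j$, the fraction of the Voronoi cell with $d_0(z) \le d$ is $\asymp e^{d - R/2}$ for $0 \le d \le R/2$.  Integrating $e^{-2\alpha d_0(z)}$ against this density yields $\fint_B e^{-2\alpha d_0(z)}\,\rho_*|dz|^2 \asymp e^{-R/2}/(2\alpha - 1)$ for any $\alpha > 1/2$, which is exactly the bound we need.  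The off-diagonal cross-terms $j \ne k$ are controlled pointwise by $C(S)^2 e^{-\alpha R}$ using the triangle inequality $d_\mathbb{D}(z,\gamma_j^*) + d_\mathbb{D}(z,\gamma_k^*) \ge d_\mathbb{D}(\gamma_j^*, \gamma_k^*) \ge R$, and so are of strictly smaller order than the diagonal contribution.  The main obstacle is the precise verification of the Beurling decay exponent $\alpha > 1/2$ and the careful accounting of the hyperbolic-area distribution of $d_0$ under the sparse-garden geometry; a secondary, routine point is upgrading the $|p|\to 0$ conclusion of Theorem~\ref{thm-mM} to a uniform estimate at $p \approx 1$, which follows from running the Becker--Pommerenke scheme uniformly on a compact range of $p$.
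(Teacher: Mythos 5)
Your high-level strategy (reduce to a box/line average estimate for the non-linearity and feed it into the Becker--Pommerenke machinery via Theorem \ref{thm-mM}) agrees with the paper, but your mechanism for extracting the $e^{-R/2}$ factor is genuinely different. The paper does not prove a pointwise decay estimate for $(\mathcal S\mu)'/\rho$ at all. It instead integrates along a single horizontal segment $L$ of hyperbolic diameter $R/2$, exploiting that such a horocyclic segment has hyperbolic \emph{length} $\asymp e^{R/2}$; it splits the crescents into those lying wholly above $\overline{L}$ and the rest, disposes of the latter by the square-locality Lemma~\ref{square-locality} (their supports sit in two vertical half-planes plus at most one exceptional crescent), and then controls the former by an $L^1$ Fubini-type bound (Lemma~\ref{small-intersection}): once a horizontal line enters a crescent, separation forces a gap of length $\asymp e^{R/2}$ before it can meet another, so the horizontal intersection with the support is $O(1)$. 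Combining the $L^\infty$ bound with the $L^1$ bound gives an $L^2$ line-average of $O(1)$, and dividing by $|L|\asymp e^{R/2}$ yields the $e^{-R/2}$. Your route (pointwise exponential decay plus a Voronoi-cell area distribution of the distance-to-nearest-geodesic) is more in the spirit of a first-moment/geometric-measure argument; it avoids the near/far decomposition but requires you to control the sum $\sum_j e^{-\alpha d_j}$ rather than just a single term, which is where the cross-term and multiplicity issues enter.

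Two points of substance deserve correction. First, the claimed decay exponent $\alpha\ge 2$ ``from the cubic kernel'' is wrong. A direct computation in the half-plane model shows that for $\mu$ supported in a Euclidean cone $\{|\re\zeta|\le c_S|\im\zeta|\}\cap\overline{\mathbb{H}}$ (the $S$-tube around a geodesic) and $z=1+i\delta$, one finds $|(\mathcal S\mu)'(z)|\asymp C(S)$ and hence $|(\mathcal S\mu)'(z)/\rho_{\mathbb H}(z)|\asymp C(S)\,\delta = C(S)e^{-d}$, where $d\approx\log(1/\delta)$ is the hyperbolic distance to the geodesic; the cubic kernel is exactly compensated by the linearly growing cross-sectional width of the tube. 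So $\alpha=1$, not $\alpha\ge 2$. This still satisfies your threshold $\alpha>1/2$, so your computation $\fint e^{-2\alpha d_0}\asymp e^{-R/2}/(2\alpha-1)$ survives, but the margin is thinner than you suggest. Second, the reduction ``$|(\mathcal S\mu)'/\rho_*|\le C'(S)e^{-\alpha d_0(z)}$'' is not immediate from summing $e^{-\alpha d_j}$ over $j$, because there can be many geodesics roughly equidistant from $z$. You need an additional counting input: for an $R$-separated family, the shadows of the $(R/2)$-tubes on the boundary circle are disjoint and the tube at distance $d_j$ has shadow $\asymp e^{-(d_j-R/2)}$, which gives $\sum_{j}e^{-d_j}\lesssim e^{-d_0}+e^{-R/2}$. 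With this the diagonal and cross terms both close. You were right to flag these as the main obstacles; they are real but fixable, and once fixed your argument does give the theorem.
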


\subsection{Related results and open problems}

After the first version of the paper was written, the author realized that the average non-linearity is related to the local variance
of a dyadic martingale associated to a Bloch function, introduced by Makarov.
These martingale arguments \cite{ivrii-mak} give a quicker route to the main results of this paper as well as give additional characterizations of $\Sigma^2(k)$ in terms of the constant in Makarov's law of iterated logarithm and the transition parameter for the singularity of harmonic measure.
Nevertheless, the Becker-Pommerenke argument is in some ways stronger: martingale techniques give a weaker error term
in the expansion of $D(k)$ and are not applicable (at least in a direct way) to study quasiconformal mappings whose dilation has small support
(Theorem \ref{sparse-thm}).

In a recent work \cite{hedenmalm-atvar}, Hedenmalm studied the notion of ``asymptotic tail variance'' of Bloch functions to show the 
estimate 
\begin{equation}
\label{eq:hed2-thm}
B_k(p) \le (1+7k)^2 \cdot \frac{k^2|p|^2}{4}, \qquad |p| \le \frac{2}{k(1+7k)^2}.
\end{equation}
However, the arguments of this paper are only effective when the product $k|p|$ is small.
It would be natural to interpolate between
Theorem \ref{main-thm} and (\ref{eq:hed2-thm}) in the range $0 \le k|p| \le 2$.

\noindent To conclude the introduction, we mention several open problems:
\begin{enumerate}
\item Is it true that $\lim_{p \to 0} 4 \, B(p)/p^2 = \lim_{k\to 1^-}\Sigma^2(k)$\,?

\item
Are the functions $B_k(p)$ and $D(k)$ differentiable on an interval $(-\epsilon,\epsilon)$?

\item 
 Is it true that $D(k) = \Sigma^2 k^2 + a_3 k^3 + o(k^3)$ for some $a_3 \in \mathbb{R}$\,?

\item For a Bloch function $g \in \mathcal B(\mathbb{D}^*)$, let
$$
\sigma^2(g, R) = \frac{1}{2\pi|\log(R-1)|} \int_{|z|=R} |g(z)|^2 \, |dz|
$$
and $\Sigma^2_R := \sup_{|\mu| \le \chi_{\mathbb{D}}} \sigma^2(g,R)$.
In \cite{AIPP}, it was proved that
$$|\Sigma^2_R - \Sigma^2| \le C \cdot \log \frac{1}{R-1}, \qquad 1 < R < 2.$$ Does one have ``exponential mixing''
$|\Sigma^2_R - \Sigma^2| \le C \cdot  \frac{1}{(R-1)^\gamma}$ for some $\gamma > 0$?
\end{enumerate}

\subsection*{Acknowledgements}

The author wishes to thank K.~Astala, H.~Hedenmalm, I.~Kayumov, A. Per\"al\"a and I.~Prause for stimulating conversations.
The research was supported by the Academy of Finland, project no.~271983.

 \section{Background}

\label{sec:background}
In this section, we recall the definition of the universal Teichm\"uller space. We also discuss holomorphic families of conformal maps and 
prove Theorem \ref{sigmak-convex} which says that $\Sigma^2(k)/k^2$ is a convex function of $k \in [0,1)$.

\subsection{Universal Teichm\"uller Space}

The analysis of the universal integral means spectrum $B(p)$ can be thought of as an extremal problem in the universal Teichm\"uller space $\mathcal T(\mathbb{D}^*) = \bigcup_{0\le k < 1} \bSigma_k$. The {\em classical Bers embedding} (with the Schwarzian derivative)
\begin{equation}
\beta_2: \quad
 \varphi \, \to \, s_\varphi \, := \,  \biggl (\frac{\varphi''}{\varphi'} \biggr)' - \frac{1}{2} \biggl (\frac{\varphi''}{\varphi'} \biggr)^2
 \end{equation}
  expresses $\mathcal T(\mathbb{D}^*)$ as a bounded domain
 in a complex Banach space, giving $\mathcal T(\mathbb{D}^*)$ the structure of an infinite-dimensional complex manifold. 
 More precisely, let $A_2^\infty(\mathbb{D}^*)$ be the space of bounded holomorphic quadratic differentials $q(z)dz^2$ equipped with the norm
  $$\| q \|_{A_2^\infty(\mathbb{D}^*)} := \sup_{z \in \mathbb{D}^*} (|z|^2-1)^2 | q(z) |.$$
 Then, as is well-known, 
 the image $\beta_2(\mathcal T(\mathbb{D}^*)) \subset A_2^\infty(\mathbb{D}^*)$ is contained in a ball of radius 6.
 In view of Royden's theorem which equates the Kobayashi and Teichm\"uller metrics on $\mathcal T(\mathbb{D}^*)$, the sets  $\mathbf{\Sigma}_k \subset \mathcal T(\mathbb{D}^*)$
 are metric balls, i.e.~$$\mathbf{\Sigma}_k = \{\varphi: d_T(\id, \varphi) \le d_{\mathbb{D}}(0, k)\}.$$
 This geometric characterization of $\mathbf{\Sigma}_k$ justifies the study of the spectra $B_k(p)$. 
From the point of view of this paper, the Bers embedding (with non-linearity) 
\begin{equation}
\beta_1: \quad
 \varphi \, \to \, n_\varphi \, := \,  \frac{\varphi''}{\varphi'}
\end{equation}
 into the space $A^\infty_1(\mathbb{D}^*)$ of bounded holomorphic 1-forms is more natural.
 Here, the norm of $\phi(z) dz$ is given by
   $$\| \phi \|_{A_1^\infty(\mathbb{D}^*)} := \sup_{z \in \mathbb{D}^*} (|z|^2-1) | \phi(z) |.$$
The image of $\beta_1$ is an open set if we restrict to the subspace of 1-forms that vanish at infinity, i.e.~which have the asymptotics 
 $\phi(z) = \mathcal O(1/|z|^3)$ as $z \to \infty$.

 \begin{remark} (i) The two embeddings present $\mathcal T(\mathbb{D}^*)$ 
 with the same complex structure; however, the metric closures in the ambient Banach spaces are different. The interested reader may consult the work
 of Astala-Gehring \cite{AG} for
 more information.
 
 (ii) If $\bSigma$ is the collection of all conformal maps in principal normalization, then
  $$\inte \beta_2(\bSigma) = \beta_2(\mathcal T(\mathbb{D}^*)) \qquad \text{yet} \qquad \beta_2(\bSigma) \supsetneq \cl \beta_2(\mathcal T(\mathbb{D}^*)).$$ From the fractal approximation principle of Carleson and Jones \cite{CJ}, it is sufficient to take the supremum in the definition of $B(p)$ over $\mathcal T(\mathbb{D}^*)$,
  allowing one to ignore the complement $\bSigma \setminus \mathcal T(\mathbb{D}^*)$. For an alternative approach to fractal approximation, 
  see the work of Beliaev and Smirnov \cite{BS}.
\end{remark}

\subsection{Holomorphic families}

By a {\em holomorphic family} of conformal maps, we mean 
$$
\varphi_t \colon \mathbb{D^*} \to \mathbb{C}, 
\qquad\varphi_0(z) = z, 
\qquad\varphi_t(z) = z + \mathcal O(1/|z|), 
\qquad t \in \mathbb{D}.
$$
According to $\lambda$-lemma, each map $\varphi_t$ admits a $|t|$-quasiconformal extension to the complex plane. Conversely, 
if $\varphi \in \bSigma_k$ has a $k$-quasiconformal extension $H$, then it may be
naturally included into a holomorphic family $\{\varphi_t,\, t \in \mathbb{D}\}$ with $\varphi = \varphi_k$.
This is done by letting $H_t$ be the principal homeomorphic solution to the Beltrami equation $\overline{\partial} H_t = t \, (\mu/k) \, \partial H_t$, 
and then restricting $H_t$ to the exterior unit disk.

\begin{lemma}
Given a standard holomorphic family $\{\varphi_t = w^{t\mu}, \, t \in \mathbb{D}\}$ with $|\mu| \le \chi_{\mathbb{D}}$, the map $t \to b_{\varphi_t} =  \log\varphi_t'$ is a Banach-valued holomorphic function from $\mathbb{D}$ to the Bloch space of the exterior
unit disk.
In particular, the mappings $t \to n_{\varphi_t} \in A^\infty_1(\mathbb{D}^*)$ and $t \to s_{\varphi_t} \in A^\infty_2(\mathbb{D}^*)$ are holomorphic.
\end{lemma}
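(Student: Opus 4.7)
The plan is to reduce Banach-valued holomorphy to two classical ingredients: (i) pointwise holomorphic dependence of $\varphi_t(z)$ on $t$ for each fixed $z$, and (ii) local uniform bounds on the relevant norms as $t$ varies. Once both are in hand, the Dunford-type principle that a locally bounded, weakly holomorphic map into a Banach space is norm-holomorphic (applied with point-evaluations as the separating family of bounded functionals on $\mathcal{B}(\mathbb{D}^*)$, $A_1^\infty(\mathbb{D}^*)$, $A_2^\infty(\mathbb{D}^*)$) immediately delivers the conclusion.

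For (i), I would invoke the Ahlfors--Bers measurable Riemann mapping theorem with parameters: since $t \mapsto t\mu$ is obviously a holomorphic $L^\infty$-valued family with $\|t\mu\|_\infty \le |t| < 1$ on compact subsets of $\mathbb{D}$, the solution map $t \mapsto w^{t\mu}(z)$ is holomorphic in $t \in \mathbb{D}$ for every fixed $z \in \mathbb{C}$. Restricting to $z \in \mathbb{D}^*$ and applying Cauchy's formula on small circles inside $\mathbb{D}^*$, the same holds for $t \mapsto \varphi_t'(z)$; since $\varphi_t'(\infty) = 1$ by principal normalization, a single-valued branch of $\log \varphi_t'$ exists on $\mathbb{D}^*$ with value $0$ at infinity, and it too is pointwise holomorphic in $t$.

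For (ii), the classical Kraus--Nehari--Becker estimates give the required uniform control: for $\varphi \in \bSigma_k$ one has
\[
\sup_{z \in \mathbb{D}^*}(|z|^2-1)\,|n_\varphi(z)| \,\le\, 6k, \qquad \sup_{z \in \mathbb{D}^*}(|z|^2-1)^2\,|s_\varphi(z)| \,\le\, 6.
\]
Since $\varphi_t \in \bSigma_{|t|}$ by the $\lambda$-lemma, these bounds yield $\|\log \varphi_t'\|_{\mathcal{B}(\mathbb{D}^*)} = \|n_{\varphi_t}\|_{A_1^\infty(\mathbb{D}^*)} \le 6|t|$ and $\|s_{\varphi_t}\|_{A_2^\infty(\mathbb{D}^*)} \le 6$, both locally bounded in $t \in \mathbb{D}$.

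To finish, one can either invoke the weak-holomorphy principle directly, or, more explicitly, define the power series coefficients by the Banach-valued Cauchy integral
\[
g_n \,=\, \frac{1}{2\pi i} \oint_{|t|=r} \frac{\log \varphi_t'}{t^{n+1}}\,dt \,\in\, \mathcal{B}(\mathbb{D}^*),
\]
which makes sense as a Bochner integral thanks to the norm bound on the integrand; the standard Cauchy estimate $\|g_n\|_\mathcal{B} \le 6r \cdot r^{-n}$ gives convergence of $\sum t^n g_n$ in $\mathcal{B}(\mathbb{D}^*)$ for $|t| < r$, and pointwise uniqueness identifies the sum with $\log \varphi_t'$. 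The analogous arguments with the $A_1^\infty$ and $A_2^\infty$ norms, using the Becker and Kraus--Nehari bounds respectively, handle $n_{\varphi_t}$ and $s_{\varphi_t}$. The only mildly delicate point is justifying the exchange of the pointwise-in-$z$ Cauchy formula in $t$ with the suprema defining the Banach norms; this is routine once one has the uniform estimates above.
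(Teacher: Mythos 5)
Your proof is correct and follows the same basic strategy as the paper — establish pointwise holomorphy in $t$ and a local norm bound, then upgrade to Banach-valued holomorphy — but the upgrading step is carried out by different means, and the difference is worth noting. The paper identifies the predual of $\mathcal B(\mathbb{D}^*)$ concretely as the Bergman space $A^1$ and checks weak-$*$ holomorphy by hand: it expresses the pairing $\langle b_t, g\rangle$ as a limit of holomorphic functions $\frac{1}{\pi}\int b_t(z)\overline{g(rz)}\,|dz|^2$ that converges uniformly in $t$ because the dilates $g_r \to g$ in $A^1$ and $\|b_t\|_{\mathcal B}$ is uniformly bounded. It then treats $n_{\varphi_t}$ and $s_{\varphi_t}$ as corollaries, since $b \mapsto b'$ and $\phi \mapsto \phi' - \tfrac12\phi^2$ are bounded linear (resp.\ polynomial) maps $\mathcal B \to A_1^\infty \to A_2^\infty$. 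You instead cite the general Dunford/Grothendieck/Arendt--Nikolski principle: a locally bounded map into a Banach space which is holomorphic against a point-separating family of functionals is norm-holomorphic. Both are legitimate; the paper's argument is essentially a self-contained proof of that principle in the specific setting, while yours outsources it. One small imprecision in your write-up is the phrase ``locally bounded, weakly holomorphic map'' — Dunford's theorem (holomorphy against all of $X^*$) does not need the boundedness hypothesis separately, whereas the version you actually use (holomorphy against a merely separating subfamily such as point evaluations) does require local boundedness, which you correctly supply. You should also make explicit that point evaluations are bounded functionals on $\mathcal B(\mathbb{D}^*)$ only after fixing the normalization $b(\infty)=0$, which is satisfied here since $\log\varphi_t'(\infty)=0$. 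These are cosmetic; the argument is sound.
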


\begin{proof}[Proof]
The holomorphy of $n_{\varphi_t}$ and $s_{\varphi_t}$ follows from the boundedness
of $b \to b'$, $\mathcal B(\mathbb{D}^*) \to A^\infty_1(\mathbb{D}^*)$ and $\phi \to \phi' - \frac{1}{2} \cdot \phi^2$, $A^\infty_1(\mathbb{D}^*) \to A^\infty_2(\mathbb{D}^*)$. To see that $b_{\varphi_t}$ is holomorphic, it suffices to check that it
is weak-$*$ holomorphic.

For simplicity of exposition, let us instead show that any norm-bounded, pointwise holomorphic function from the unit disk into $\mathcal B$ is weak-$*$ holomorphic. 
As is well-known \cite{hedenmalm, zhu-spaces}, the predual of $\mathcal B$ is the Bergman space $A^1$, with the dual pairing
$$\langle b, g \rangle =  \lim_{r \to 1} \frac{1}{\pi} \int_{\mathbb{D}} b(z) \overline{g(rz)} \, |dz|^2, \qquad
b \in \mathcal B, \ g \in A^1.$$
Since the dilates $g_r(z) = g(rz)$ converge to $g(z)$ in $A^1$, the above limit converges uniformly in $r$
as $b$ ranges over bounded subsets of the Bloch space. Hence,
$$
t \, \to \, \langle b_t, g \rangle \, = \, \lim_{r \to 1} \frac{1}{\pi} \int_{\mathbb{D}} b_t(z) \overline{g(rz)} \, |dz|^2
$$
is a holomorphic function, being the uniform limit of holomorphic functions.
\end{proof}

From the Neumann series expansion for principal solutions to the Beltrami equation \cite[p.~165]{AIM},
\begin{equation} \label{neumann}
\varphi'_t = \partial  \varphi_t = 1+t \mathcal{S}\mu+t^2 \mathcal{S}\mu \mathcal{S}\mu+ \ldots, \qquad |z|>1,
\end{equation}
 it follows that the derivative of the Bers embedding at the origin is just 
\begin{equation}
\label{eq:inf-form1}
(d/dt) \bigl |_{t=0} \, \log \varphi_t' = \mathcal S\mu.
\end{equation}
 In particular, $\mathcal S\mu \in \mathcal B(\mathbb{D}^*)$ and 
\begin{equation}
\label{eq:infinitesimal-form}
\biggl \| \frac{ \log\varphi_t'}{t} - \mathcal S\mu \biggr\|_{\mathcal B(\mathbb{D}^*)} = \mathcal O(|t|), \quad \text{for }|t| < 1/2.
\end{equation}
 Since the asymptotic variance is continuous in the Bloch norm \cite{hedenmalm-atvar}, the function
\begin{equation}
u(t) = \sigma^2 \biggl ( \frac{\log\varphi_t'}{t}  \biggr )
\end{equation} extends continuously to $\sigma^2(\mathcal S\mu)$ at $t=0$. 
Similarly to (\ref{eq:inf-form1}),
$(\mathcal S\mu)' = (d/dt) \bigl |_{t=0} \, n_{\varphi_t}$ and
$(\mathcal S\mu)'' = (d/dt) \bigl |_{t=0} \, s_{\varphi_t}$ 
are the infinitesimal forms of the non-linearity and the Schwarzian derivative respectively.

\begin{proof}[Proof of Theorem \ref{sigmak-convex}]
Taking the supremum of (\ref{eq:infinitesimal-form}) over all $|\mu| \le \chi_{\mathbb{D}}$ shows that $\lim_{k \to 0} \Sigma^2(k)/k^2 = \Sigma^2$, which is 
the statement (ii).

Part (i) uses a  fractal approximation argument.
According to Theorem \ref{AIPP-fat}, in the definition
$\Sigma^2(k) = \sup_{\varphi \in \bSigma_k} \sigma^2(\log \varphi')$, it suffices to take the supremum over conformal maps
$\varphi = w^{\mu}$ that have ``dynamically-invariant'' quasiconformal extensions.
According to  \cite[Section 8]{AIPP}, in these fractal cases, the function $u(t)$ is subharmonic. In particular, this implies that
$\sup_{|t|=k} u(t)$
is a non-decreasing convex function of $k \in [0,1)$. Therefore, $\Sigma^2(k)/k^2$ is also a non-decreasing convex function, being the
supremum of such functions.
\end{proof}

\newpage

\section{Working on the upper half-plane}

\label{sec:life-on-H}

Since non-linearity is not invariant under $\aut(\mathbb{D})$, it is convenient to work on the upper half-plane. This
makes the computations quite a bit simpler. If $b \in \mathcal B(\mathbb{H})$ is a holomorphic function on $\mathbb{H}$ with 
\begin{equation}
\label{eq:def-bloch}
\|b\|_{\mathcal B(\mathbb{H})} = \sup_{z \in \mathbb{H}}\, 2y \cdot |b'(z)| \le \infty,
\end{equation}
 we define its {\em asymptotic variance} as
\begin{equation}
\sigma^2_{[0,1]}(b)  = \limsup_{y \to 0^+} \, \frac{1}{|\log y|}  \int_0^1 |b(x+iy)|^2 \, dx.
 \end{equation}
In \cite[Section 6]{mcmullen}, McMullen  showed that one can compute the asymptotic variance 
of Bloch functions by examining C\'esaro averages of integral means that
involve
higher order derivatives. 
Here we shall be content with the formula
\begin{equation}
\label{eq:mcm-ca}
\sigma^2_{[0,1]}(b)  = \limsup_{h \to 0^+} \, \frac{1}{|\log h|} \int_h^1 \int_0^1 \biggl |\frac{2b'(x+iy)}{\rhoH}\biggr |^2 \dzy.
 \end{equation}
 
 Let  $\bH$ denote the class  
of conformal maps $f: \mathbb{H} \to \mathbb{C}$ which fix the points $0,1,\infty$ and
$\bH_k \subset \bH$ be the subclass consisting of conformal maps that admit a $k$-quasiconformal extension
to the lower half-plane $\Hbar$.
For $f \in \bH$, the  {\em integral means spectrum} is given by
 $$
\beta_f(p) = \limsup_{y \to 0^+} \frac{\log \int_0^1 |f'(x+iy)^p| \, dx}{|\log y|}, \qquad p \in \mathbb{C}.
 $$

For a Beltrami coefficient $\mu$ supported on $\Hbar$  with $\| \mu \|_\infty < 1$, let
$\tilde w^\mu \in \bH_k$ denote the {\em normalized} solution  of the Beltrami equation $\overline{\partial} w = \mu\, \partial w$.
(The notation $w^\mu$ is reserved for principal solutions defined for compactly supported coefficients.)

Since the formula for the Beurling transform
(\ref{eq:beurling}) may not converge if $\mu$ is not compactly supported, we are obliged to work with a modified Beurling transform
\begin{equation}
\label{eq:beurling22}
 \mathcal S^\# \mu(z) \, =\, -\frac{1}{\pi} \int_{\Hbar} \mu(\zeta) \biggl [ \frac{1}{(\zeta-z)^2} - \frac{1}{\zeta^2} \biggr ] \, |d\zeta|^2,
 \end{equation} 
however, the formula for the derivative remains the same:
\begin{equation}
\label{eq:beurling23}
 \text{``}(\mathcal  S \mu)'(z)\text{''} \, := \,  (\mathcal S^\# \mu)'(z)  \, =\, -\frac{2}{\pi} \int_{\Hbar} \frac{\mu(\zeta)}{(\zeta-z)^3} \, |d\zeta|^2.
 \end{equation}
Not surprisingly,
$\mathcal S^\# \mu$ and $\log f'$ are Bloch functions. In fact,
\begin{equation}
\| \mathcal S^\# \mu \|_{\mathcal B(\mathbb{H})} \le \frac{8}{\pi} \cdot \| \mu \|_\infty, \qquad
\| \log  f' \|_{\mathcal B(\mathbb{H})} \le 6k,
\end{equation}
have the same bounds as do Bloch functions on the disk. Furthermore, the universal bounds are also unchanged:
\begin{lemma}
\label{life-on-H}
$$\Sigma^2 = \sup_{|\mu| \le \chi_{\Hbar}} \sigma^2_{[0,1]}(\mathcal S^\# \mu), \qquad
\Sigma^2(k) =  \sup_{f \in \bH_k}  \sigma^2_{[0,1]}(\log f'),
$$
$$
B_k(p) = \sup_{f \in \bH_k} \beta_f (p).
$$
\end{lemma}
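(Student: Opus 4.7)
The plan is to transfer everything to the upper half-plane via a Möbius identification $M:\mathbb{D}^*\to\mathbb{H}$, chosen so that the image of $\infty\in\mathbb{D}^*$ is some point $a\in\mathbb{R}\setminus[0,1]$. Conjugation by $M$ identifies $\bSigma_k$ with $\bH_k$ up to pre- and post-composition with Möbius transformations (absorbing the different normalizations), and sends Beltrami coefficients with $|\mu|\le\chi_\mathbb{D}$ to coefficients $\tilde\mu$ with $|\tilde\mu|\le\chi_{\overline{\mathbb{H}}}$. Because $a\notin[0,1]$, the Möbius factor $\log(M^{-1})'$ is bounded on the strip $[0,1]\times[0,1]\subset\mathbb{H}$, so all corrections it introduces (to $\log\varphi'$, and to the Beurling transform when passing from $\mathcal{S}$ to $\mathcal{S}^\#$) are negligible in any logarithmic average.

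For the spectrum identity $B_k(p)=\sup_{f\in\bH_k}\beta_f(p)$, I would change variables $z=M^{-1}(x+iy)$ in the half-plane integral of $|f'|^p$. The horizontal strip $[0,1]\times\{y\}$ pulls back to a hyperbolically bounded arc on the circle $|z|=r_y$ with $|\log(r_y-1)|\sim|\log y|$, and the multiplicative factor $|(M^{-1})'|^p$ is bounded on this strip; hence $\beta_f(p)=\beta_\varphi(p)$ whenever $\varphi$ and $f$ correspond. Taking suprema and using the invariance of $\bSigma_k$ under rotations of $\mathbb{D}^*$ (which can translate any boundary arc onto the preimage of $[0,1]$) yields the claimed identity. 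For the asymptotic variance identities, I would rewrite $\sigma^2$ in the conformally invariant 2D form implicit in (\ref{eq:mcm-ca}), with intrinsic integrand $|2g'/\rho|^2$ and intrinsic measure $\rho^2|dz|^2$; Möbius conjugation then transfers strip-over-$[0,1]$ integrals to collar-over-an-arc integrals. The half-plane and disk suprema are compared by (a) localizing a near-extremal $\mu\in M(\mathbb{D})$ to a boundary arc and transferring it to $[0,1]$, to bound disk-by-half-plane, and (b) periodizing a near-extremal $\tilde\mu$ under the geodesic dilation $z\mapsto 2z$, in the spirit of the fractal approximation of Theorem \ref{AIPP-fat}, to bound half-plane-by-disk.

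The main obstacle is reconciling the different averaging windows --- the full circle $\mathbb{S}^1$ of length $2\pi$ versus the fixed interval $[0,1]$ --- which produce genuinely different numbers for a single $\mu$. The resolution is fractal approximation: the suprema $\Sigma^2$, $\Sigma^2(k)$, and $B_k(p)$ are each attained asymptotically by $\mu$'s whose induced $\sigma^2$ and $\beta_f$ are translation-invariant along the boundary, so that the two normalizations agree. Once this is in place, it remains only to check that the Möbius correction to $\log\varphi'$ is bounded on the relevant averaging region, and that the analogous correction distinguishing $\mathcal{S}\mu$ from $\mathcal{S}^\#\tilde\mu$ is Bloch on $\mathbb{H}\setminus\{a\}$ --- both routine verifications.
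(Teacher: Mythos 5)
Your route — transfer by a Möbius identification $M:\mathbb{D}^*\to\mathbb{H}$ — is genuinely different from what the paper intends, and it runs into structural problems that the paper's exponential transform is designed to avoid. The paper's comparison map is $\xi(w)=e^{-2\pi i w}$, which has two features no Möbius shares: (a) it maps each horizontal line $\{\im w=y\}$ \emph{exactly} onto the circle $\{|z|=e^{2\pi y}\}$, so the annular/strip averaging geometries match up directly; and (b) it intertwines the affine structure $z\mapsto z+1$, $z\mapsto nz$ on $\mathbb{H}$ (i.e.\ the $n$-adic grid of Section 3.3) with the identity and $z\mapsto z^n$ on $\mathbb{D}^*$, which is precisely the invariance used in the fractal approximation of Sections 4--6. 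Under a Möbius, a horizontal line in $\mathbb{H}$ becomes a \emph{horocycle} tangent to $\mathbb{S}^1$ at $M^{-1}(\infty)$, not a circle $|z|=r_y$, so the claim that $[0,1]\times\{y\}$ pulls back to an arc of $|z|=r_y$ is false as stated; and the translation/dilation group on $\mathbb{H}$ becomes a parabolic and a hyperbolic on $\mathbb{D}^*$ that bear no useful relation to the rotational/power-map invariance needed for the periodization in the Box Lemma. A smaller slip: for a Möbius $M:\mathbb{D}^*\to\mathbb{H}$, the interior point $\infty\in\mathbb{D}^*$ goes to an interior point of $\mathbb{H}$, not to a boundary point $a\in\mathbb{R}$; what you need to keep away from $[0,1]$ is the boundary point $M^{-1}(\infty)\in\mathbb{S}^1$.

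The genuine gap is in how you close the ``averaging window'' mismatch. You correctly identify it as the crux and invoke fractal approximation, but the content of fractal approximation is exactly Theorem \ref{AIPP-fat} and Lemmas \ref{boxcart}--\ref{boxcart-global}, and those are set up in terms of $n$-adic boxes in $\mathbb{H}$ and descend to the disk only through $\xi$. With a Möbius in place of $\xi$, the near-extremal rotation-invariant (or $z\mapsto z^n$-eventually-invariant) coefficients on $\mathbb{D}$ do not become translation/dilation-periodic on $\mathbb{H}$, so ``translation-invariance along the boundary'' is not preserved by your identification; you would in effect have to reprove the Box Lemma twice, once on each model, with an ad hoc bridge between them. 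The half-plane$\le$disk direction of your plan — periodizing a near-extremal $\tilde\mu$ — is exactly where this bites. By contrast the disk$\le$half-plane direction (take the best arc, rotate it to the preimage of $[0,1]$) is sound for $\beta_f$, since $\limsup$ of a sum is dominated by the worst piece, but for $\sigma^2$ one is comparing an average over a subarc with an average over the full circle, and here again one cannot avoid periodization. The fix is simply to use $\xi$ in place of $M$: then the local comparison of Lemma \ref{exponential-lemma}, the C\'esaro formula (\ref{eq:mcm-ca}), and the Box Lemmas together give Lemma \ref{life-on-H} directly, which is the route the paper is pointing to.
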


\subsection{Exponential transform}

A convenient way to transfer results from the half-plane to the disk is by exponentiating.
Let $\xi(w) = e^{-2\pi i w}$ be  the exponential mapping which takes $\mathbb{H} \to \mathbb{D}^*$. 
For a normalized $k$-quasiconformal mapping $f$, define its {\em exponential transform} as
\begin{equation}
\label{eq:exponential-transform}
 \mathcal E_f(w) = -\frac{1}{2\pi i} \cdot \log \circ f \circ \xi(w),
\end{equation}
where the branch of logarithm is chosen so that $\log  \mathcal E_f(1) = 1$.
In terms of Beltrami coefficients, the dilatation $\bel  \mathcal E_f = \xi^* (\bel f)$ is just the pullback of $\bel f$, considered as a
 $(-1,1)$ form.

It is not difficult to see that $f \to  \mathcal E_f(w)$ establishes a bijection between normalized $k$-quasiconformal mappings with ones
satisfying the periodicity condition
\begin{equation}
\mathcal E_f(w+1)=\mathcal E_f(w)+1.
\end{equation}
Of interest to us, \ $\varphi$ is conformal on $\mathbb{D}^*\ \Longleftrightarrow \ \mathcal E_\varphi$ is conformal on $\mathbb{H}$.

\begin{lemma}
\label{exponential-lemma}
{\em (i)} If $|\mu| \le \chi_{\mathbb{D}}$, then for $w \in \mathbb{H}$,
\begin{equation}
\label{eq:error-beurling}
\Biggl | \, \biggl |\frac{(\mathcal S (\xi^* \mu))'}{\rho_{\mathbb H}}(w) \biggr | \, - \, \biggl | \frac{(\mathcal S\mu)'}{\rho_*}(\xi(w)) \biggr| \, \Biggr | = o(1),
\quad
\text{as } \im w \to 0.
\end{equation}
\noindent
{\em (ii)} If $\varphi$ is a normalized $k$-quasiconformal mapping that is conformal on $\mathbb{D}^*$, then
\begin{equation}
\label{eq:error-nonlinearity}
\Biggl | \, \biggl |\frac{n_{\mathcal E_\varphi}}{\rho_{\mathbb H}}(w) \biggr | \, - \, \biggl | \frac{n_\varphi}{\rho_*}(\xi(w)) \biggr| \, \Biggr | = o(1),
\quad
\text{as } \im w \to 0.
\end{equation}
\end{lemma}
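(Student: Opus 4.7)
The plan is to compute both sides via the chain rule, identify a ``main term'' that matches thanks to the hyperbolic local isometry $\xi^*\rho_* = \rho_\mathbb{H}$ (up to a Taylor-expansion error), and bound the residual corrections using boundary regularity.

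For part~(ii), applying the composition law for the non-linearity to $\mathcal E_\varphi = L \circ \varphi \circ \xi$ with $L(u) = -\log u/(2\pi i)$ (so that $n_L(u) = -1/u$) and using $n_\xi \equiv -2\pi i$ gives
\[
n_{\mathcal E_\varphi}(w) \,=\, -2\pi i\,\bigl[\,1 \,+\, z\,n_\varphi(z) \,-\, z\varphi'(z)/\varphi(z)\,\bigr], \qquad z = \xi(w).
\]
Dividing by $\rho_\mathbb{H}(w) = 1/y$ and expanding $|z| = e^{2\pi y}$ at $y = 0$ yields $2\pi y|z| - (|z|^2-1)/2 = O(y^3)$; together with the Bloch bound $|n_\varphi(z)| \le 6/(|z|^2-1) = O(1/y)$, this shows that the main term $2\pi y\,|z\,n_\varphi(z)|$ matches $|n_\varphi(z)/\rho_*(z)| = (|z|^2-1)|n_\varphi(z)|/2$ up to $O(y^2)$. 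By the reverse triangle inequality, (ii) reduces to the assertion $2\pi y\,|z\varphi'(z)/\varphi(z) - 1| = o(1)$ as $\im w \to 0$.

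Writing $\varphi(z) = z\,\phi(z)$ with $\phi \to 1$ at $\infty$, this quantity equals $(|z|-1) \cdot |z(\log\phi)'(z)| \cdot (1 + o(1))$. The existence of $\mathcal E_\varphi$ as a conformal map on $\mathbb{H}$ forces $0$ to lie strictly inside the quasicircle $\varphi(\mathbb{S}^1)$, so $\log\phi$ is bounded holomorphic on $\mathbb{D}^*$ and vanishes at $\infty$. Since $\varphi$ is $k$-quasiconformal on the plane, Mori's theorem implies that $\log\phi$ extends $\alpha$-H\"older to $\overline{\mathbb{D}^*}$ for some $\alpha = \alpha(k) > 0$, and the Hardy--Littlewood derivative inequality gives $|(\log\phi)'(z)| \le C(|z|-1)^{\alpha-1}$; hence $(|z|-1)\,|z(\log\phi)'(z)| = O((|z|-1)^\alpha) = O(y^\alpha) = o(1)$, uniformly in $\re w$.

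For part~(i) I would argue directly. Splitting $\Hbar = \bigsqcup_{n \in \mathbb{Z}} \Omega_n$ into the fundamental domains of the translation $w \mapsto w+1$ and changing variables $\zeta = \xi(\eta)$ in the integral for $(\mathcal S^\#(\xi^*\mu))'(w) = -\frac{2}{\pi}\int_{\Hbar}(\xi^*\mu)(\eta)/(\eta-w)^3\,|d\eta|^2$, one isolates the $n = 0$ summand, which reproduces $\xi'(w)\,(\mathcal S\mu)'(z)$ after a Taylor expansion of $-\log(\zeta/z)$ around $\zeta = z$; the Taylor remainder contributes $O(\log(1/y))$ and the tail $\sum_{n \neq 0}$ is $O(1)$ (since $|2\pi i n - \log(\zeta/z)| \gtrsim |n|$). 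Both are $o(\rho_\mathbb{H})$, so dividing by $\rho_\mathbb{H}$ gives $o(1)$, and the matching argument from (ii) for $|\xi'(w)(\mathcal S\mu)'(z)|/\rho_\mathbb{H}$ versus $|(\mathcal S\mu)'(z)|/\rho_*(z)$ goes through verbatim. The main obstacle throughout is exactly the $\xi'(w)\varphi'(z)/\varphi(z)$ correction in (ii): Schwarz--Pick alone only gives an $O(1)$ bound after dividing by $\rho_\mathbb{H}$, and obtaining genuine $o(1)$ convergence rests essentially on the quasiconformal H\"older regularity via Mori.
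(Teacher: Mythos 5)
Your proof proceeds along the same lines the paper has in mind: for part (ii), write $\mathcal E_\varphi = L\circ\varphi\circ\xi$ with $L(u)=-\log u/(2\pi i)$, apply the cocycle law for the non-linearity, and use that $\xi$ is close to a local isometry between $(\mathbb{H},\rho_{\mathbb{H}})$ and $(\mathbb{D}^*,\rho_*)$ near the boundary (the paper's observations I and II); for part (i), work with the explicit kernel and localize. The paper only sketches this and defers details to \cite[Section 2]{ivrii-phd}, so the comparison is really with the intended argument, and you have reconstructed it faithfully. In particular, your computation $n_{\mathcal E_\varphi}(w)=-2\pi i\bigl[1+zn_\varphi(z)-z\varphi'(z)/\varphi(z)\bigr]$ and the Taylor matching $2\pi y|z|-(|z|^2-1)/2=\mathcal O(y^3)$, combined with $|n_\varphi(z)|=\mathcal O(1/y)$, correctly reduce (ii) to showing $2\pi y\,|1-z\varphi'(z)/\varphi(z)|=o(1)$.

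You are right that this leftover term is the genuine content: the factor $\varphi'(z)$ is \emph{not} bounded near $\mathbb{S}^1$ (only $\log\varphi'$ is Bloch), so one cannot simply say ``$1/\varphi$ is bounded while $\rho_*\to\infty$.'' Your Mori plus Hardy--Littlewood route is valid: the $k$-qc extension makes $\varphi$ locally H\"older, hence $\log\phi$ with $\phi=\varphi/z$ extends $\alpha$-H\"older to $\overline{\mathbb{D}^*}$ and $|(\log\phi)'(z)|\lesssim(|z|-1)^{\alpha-1}$. A lighter route, closer to the paper's appeal to Koebe, is: $\varphi$ extends continuously to $\mathbb{S}^1$ (Carath\'eodory, since $\varphi(\mathbb{S}^1)$ is a Jordan curve) and Koebe distortion gives $(|z|^2-1)|\varphi'(z)|\asymp\mathrm{dist}(\varphi(z),\varphi(\mathbb{S}^1))\to 0$ uniformly; since $|\varphi|$ is bounded below on a fixed annulus this already gives $o(1)$ without invoking Mori. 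For part (i), your fundamental-domain decomposition and the estimates $\mathcal O(\log(1/y))$ for the Taylor remainder and $\mathcal O(1)$ for the $n\neq 0$ tail, both $o(\rho_{\mathbb{H}})$, are sound; one could alternatively combine Lemma~\ref{square-locality} with the approximate linearity of $\xi$. One small slip: with the normalization fixing $0,1,\infty$ used for $\mathcal E_\varphi$, the function $\phi=\varphi/z$ tends to $\varphi'(\infty)$ rather than $1$ at infinity, but since your argument only uses that $\phi$ is bounded and bounded away from $0$, this does not affect anything.
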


The above lemma follows from two observations about the exponential which imply that is does not change the asymptotic features of non-linearity:

\smallskip
 
I. $\xi$ is approximately a local isometry on $\{w \in \mathbb{H}: \im w < \delta \}$, when $\mathbb{H}$ and $\mathbb{D}^*$ are equipped with
their hyperbolic metrics.

II. $\xi$ is approximately linear on small balls $B(x, \delta)$ with $x \in \mathbb{R}$:
$$
\biggl | \frac{1}{\xi'(x)} \cdot \frac{\xi(z) - \xi(w)}{z-w} - 1 \biggr | < \varepsilon, \qquad z, w \in B(x, \delta).
$$
Both of these properties are consequences of Koebe's distortion theorem, where one takes 
into account that the exponential maps the real axis to the unit circle. The reader can consult \cite[Section 2]{ivrii-phd} for more details.

From Lemma \ref{exponential-lemma}, it is clear that
 $$\sigma^2(b) = \sigma^2_{[0,1]}(\xi^*\mu)
\qquad \text{and} \qquad \beta_\varphi(p) = \beta_{\mathcal E_\varphi}(p),$$

The reader versed in the arguments of Sections \ref{sec:inf-locality}--\ref{sec:gl-locality} should have no trouble filling in the details
in Lemmas \ref{life-on-H} and \ref{exponential-lemma}.

\subsection{Boxes and grids}

 By a {\em box} in the upper half-plane, we mean a rectangle 
whose sides are parallel to the coordinate axes, with the bottom side located above the real axis. 
We say two boxes
are {\em similar} if they differ by an affine scaling $L(z) = az + b$ with $a > 0$, $b \in \mathbb{R}$.
We use $\overline{B} \subset \overline{\mathbb{H}}$ to denote the reflection of the box $B$ with respect to the real line.
Every box $B$ is similar to a unique box of the form $[0, \alpha] \times [1/n, 1]$. In this case, we
 say that $B$ is of type $(n, \alpha)$.

Boxes naturally arise in grids. By a {\em grid}, we mean a collection of similar boxes that tile $\mathbb{H}$.
One natural collection of grids are the {\em $n$-adic grids} \,$\mathscr G_n$, defined for integer $n \ge 2$. An {\em $n$-adic interval}
is an interval of the form $I_{j,k} = \bigl [j \cdot n^{-k}, (j+1) \cdot n^{-k} \bigr ]$. To an $n$-adic interval $I$, we associate
the {\em $n$-adic box} $$\square_I \, = \, \Bigl \{ w : \re w \in I, \, \im w \in \bigl [n^{-1}|I|, \,|I| \bigr ] \Bigr \}.
$$
It is easy to see from the construction that the boxes $\square_{I_{j,k}}$, with $j, k \in \mathbb{Z}$ have disjoint interiors and their union is $\mathbb{H}$.

Given two boxes $B_1$ and $B_2$, we say $B_1$ {\em dominates} $B_2$ if $B_1 = [x_1, x_2] \times [y_1, y_2]$ and $B_2 = [x_1, x_2] \times [\theta y_1, \theta y_2]$ for some $0 < \theta \le 1$. In other words, $B_2$ has the same hyperbolic height and is located strictly beneath $B_1$.
We let $\widehat{\mathscr G_n}$ denote the collection of boxes that are dominated by some box in $\mathscr G_n$ with $1/n < \theta \le 1$.
The advantage of the collection $\widehat{\mathscr G_n}$ is that any horizontal strip 
$\mathbb{R} \times [y/n,y] \subset \mathbb{H}$ of hyperbolic height $\log n$ can be tiled by boxes in $\widehat{\mathscr G_n}$.
This property will be used in Section \ref{sec:integral-means}.

Suppose $\mu$ is a Beltrami coefficient supported on the lower half-plane.
We say that $\mu$ is {\em periodic} with respect to a grid $\mathscr G$ (or rather with respect to $\overline{\mathscr G}$) if for any two 
boxes $B_1, B_2 \in \mathscr G$, we have
$
\mu|_{\overline{B}_1} = L^*(\mu|_{\overline{B}_2})$, where $L$ is the affine map that takes $B_1$ to $B_2$.
Given $\mu$ defined on a box $B$, and a grid $\mathscr G$ containing $B$, there exists a unique periodic Beltrami coefficient $\mu_{\per}$
which agrees with $\mu$ on $B$.

\section[Locality of Beurling transform]{Locality of $(\mathcal S\mu)'/\rho_{\mathbb{H}}$}
\label{sec:inf-locality}

The technique of {\em fractal approximation} from \cite{AIPP} hinges on the local nature of the 
 operator $\mu \to \mathcal (\mathcal S\mu)'$. We first show that
$(\mathcal S\mu)'(z)$ is bounded as a 1-form:

\begin{lemma}
\label{qbounds-H}
Suppose $\mu$ is a Beltrami coefficient supported on the lower half-plane with $\|\mu\|_\infty \le 1$. Then,
for $z \in \mathbb{H}$, $\bigl |(2(\mathcal S\mu)'/\rhoH)(z) \bigr | \le 8/\pi$.
\end{lemma}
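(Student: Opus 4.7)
The claim reduces to a direct bound on the integral expression (\ref{eq:beurling23}) for $(\mathcal{S}\mu)'$. Since $\rho_{\mathbb{H}}(z) = 1/y$ (as one sees from the definition (\ref{eq:def-bloch}) of the Bloch norm), proving $|2(\mathcal{S}\mu)'/\rho_{\mathbb{H}}| \le 8/\pi$ is equivalent to proving the pointwise estimate $|(\mathcal{S}\mu)'(z)| \le 4/(\pi y)$ for $z = x + iy \in \mathbb{H}$. The plan is therefore to apply the triangle inequality inside (\ref{eq:beurling23}), use $|\mu| \le 1$, and evaluate the resulting absolute integral explicitly.

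First, from (\ref{eq:beurling23}) and $\|\mu\|_\infty \le 1$, I get
\begin{equation*}
|(\mathcal{S}\mu)'(z)| \, \le \, \frac{2}{\pi} \int_{\overline{\mathbb{H}}} \frac{|d\zeta|^2}{|\zeta - z|^3}.
\end{equation*}
Because $z$ sits at height $y > 0$ above the support of integration, there is no singularity and the integral is manifestly convergent. Translating in the real direction by $x$, the integral is independent of $\re z$, so I may assume $z = iy$.

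Next, I would evaluate the integral by slicing horizontally. Writing $\zeta = u - iw$ with $w \ge 0$, we have $|\zeta - iy|^2 = u^2 + (w+y)^2$, and the standard identity
\begin{equation*}
\int_{-\infty}^{\infty} \frac{du}{(u^2 + a^2)^{3/2}} \, = \, \frac{2}{a^2}, \qquad a > 0,
\end{equation*}
applied with $a = w + y$, yields
\begin{equation*}
\int_{\overline{\mathbb{H}}} \frac{|d\zeta|^2}{|\zeta - iy|^3} \, = \, \int_0^\infty \frac{2\, dw}{(w+y)^2} \, = \, \frac{2}{y}.
\end{equation*}
Combining, $|(\mathcal{S}\mu)'(z)| \le (2/\pi)(2/y) = 4/(\pi y)$, and multiplying by $2/\rho_{\mathbb{H}} = 2y$ gives the advertised constant $8/\pi$.

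There is essentially no obstacle here, and the proof is just bookkeeping. The only point that merits a sentence is that the bound holds uniformly in $\mu$ with the constant $8/\pi$ being attained in the limit by $\mu = -e^{-i\arg(\zeta-z)}\chi_{\overline{\mathbb{H}}}$ (the coefficient which aligns the phase of the integrand), which shows the estimate is also sharp in this 1-form normalization — this matches the Bloch norm bound $\|\mathcal{S}^\#\mu\|_{\mathcal{B}(\mathbb{H})} \le 8/\pi$ stated just before the lemma.
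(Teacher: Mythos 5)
Your proof is correct and coincides with the paper's: both bound $|(\mathcal S\mu)'(z)|$ by $\frac{2}{\pi}\int_{\overline{\mathbb{H}}} |\zeta-z|^{-3}\,|d\zeta|^2$, slice horizontally, and evaluate the inner integral to obtain $4/(\pi y)$, then multiply by $2y = 2/\rho_{\mathbb{H}}$. The only quibble is in your closing sharpness remark: to align the phases in $(\ref{eq:beurling23})$ one needs $\mu(\zeta) = -e^{3i\arg(\zeta - z)}$ (so that $\mu(\zeta)/(\zeta-z)^3$ is a negative real multiple of $|\zeta-z|^{-3}$), not $-e^{-i\arg(\zeta-z)}$; and since this extremizer depends on $z$, equality at a single point does not by itself show the Bloch-norm bound $8/\pi$ is attained.
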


\begin{proof}The proof is by direct calculation:
\allowdisplaybreaks
\begin{align*}
| (\mathcal S\mu)'(z)| & \le  \frac{2}{\pi} \int_{\Hbar}  \frac{1}{|w-z|^3} \, | dw|^2,\\
& =   \frac{2}{\pi} \int_{y_0}^\infty \int_{-\infty}^\infty  \frac{1}{(x^2+y^2)^{3/2}} \, dxdy, \\
& =   \frac{2}{\pi} \int_{y_0}^\infty \frac{x}{y^2 \sqrt{x^2+y^2}} \biggl |_{-\infty}^\infty \, dy, \\
& =   \frac{4}{\pi} \int_{y_0}^\infty \frac{dy}{y^2},\\
& =   \frac{4}{\pi y_0},
\end{align*}
where $y_0 = \im z$. Multiplying by 2 and dividing by the density of the hyperbolic metric gives the result.
\end{proof}

If $z \in \mathbb{H}$ is far away from the reflection of the support of $\mu$, one can give a better estimate. 
For a point $x + iy \in \mathbb{H}$, define its ``square'' neighbourhood as
$$
Q_L(x + iy) := \bigl \{ w : \re w \in [x - e^L y, x + e^L y], \, \im w \in [e^{-L} y, e^L y] \bigr \},
$$
and let $Q_L(x-iy)$ be its reflection in the lower half-plane.
\begin{lemma}
\label{square-locality}
Under the assumptions of Lemma \ref{qbounds-H}, if $\mu = 0$ on $Q_L(x - iy)$, then 
\begin{equation}
\label{eq:square-locality}
\biggl |\frac{2(\mathcal S\mu)'}{\rhoH} (x+iy)\biggr | \le Ce^{-L}.
\end{equation}
\end{lemma}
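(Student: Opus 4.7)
The plan is to estimate $|(\mathcal S\mu)'(z)|$ directly from its integral representation
\[
(\mathcal S\mu)'(z) = -\frac{2}{\pi}\int_{\overline{\mathbb H}} \frac{\mu(\zeta)}{(\zeta-z)^3}\,|d\zeta|^2,
\]
using $|\mu|\le 1$ and the fact that $\mu\equiv 0$ on $Q_L(x-iy)$. So the integration effectively runs over $\overline{\mathbb H}\setminus Q_L(x-iy)$, and it suffices to show
\[
\int_{\overline{\mathbb H}\setminus Q_L(x-iy)} \frac{|d\zeta|^2}{|\zeta-z|^3} \;\lesssim\; \frac{e^{-L}}{y},
\]
since $1/\rho_{\mathbb H}(z)=y$ at $z=x+iy$.

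The natural change of variables is $u=\xi-x$ and $v=y-\eta\ge y$ (recall $\eta\le 0$), so that $|\zeta-z|^2=u^2+v^2$. In these coordinates the complement of $Q_L(x-iy)$ in $\overline{\mathbb H}$ is covered by three regions, corresponding to the three ways $\zeta$ can fail to lie in $Q_L(x-iy)$:
\begin{description}
\item[(i)] the \emph{side} region $|u|>e^L y$, with $v$ ranging over $[y,\infty)$;
\item[(ii)] the \emph{far} region $v>e^L y$, with $u$ unrestricted;
\item[(iii)] the \emph{near} strip $v\in [y,\,y+e^{-L}y]$ (i.e.\ $|\eta|<e^{-L}y$), with $u$ unrestricted.
\end{description}

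I would then bound each piece by a one-line computation similar to Lemma \ref{qbounds-H}. For (i), the inner $dv$-integral gives at most $1/u^2$, and $\int_{|u|>e^Ly}du/u^2=2/(e^L y)$. For (ii), the $u$-integral of $(u^2+v^2)^{-3/2}$ is $2/v^2$, and $\int_{e^Ly}^\infty 2/v^2\,dv=2/(e^L y)$. For (iii), the $u$-integral is at most $2/y^2$, and the $v$-interval has length $e^{-L}y$, producing $2e^{-L}/y$. Summing the three contributions and multiplying by $2/\pi$ gives $|(\mathcal S\mu)'(x+iy)|\le C e^{-L}/y$, which is the desired inequality.

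The only ``obstacle'' is purely bookkeeping: one must verify that these three regions cover $\overline{\mathbb H}\setminus Q_L(x-iy)$ (overlaps are harmless for an upper bound), and keep track of the geometric role of $Q_L$, whose dimensions are chosen precisely so that ``side'' and ``far'' contribute $e^{-L}/y$ through the size of the excluded range, while ``near'' contributes $e^{-L}/y$ through the thinness of the excluded strip. There is no analytic subtlety beyond the elementary integrals already carried out in the proof of Lemma \ref{qbounds-H}.
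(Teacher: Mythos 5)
Your proof is correct and follows essentially the same strategy as the paper, which merely says to estimate the contributions of the top, bottom, left, and right parts of $\overline{\mathbb H}\setminus Q_L(x-iy)$ and sum them; your three regions (side, far, near) are exactly this decomposition with left and right merged, and the three elementary integrals are computed correctly, each yielding $\mathcal O(e^{-L}/y)$. The coverage check is also right: in the $(u,v)$ coordinates the box corresponds to $|u|\le e^Ly$ and $v\in[y+e^{-L}y,\,y+e^Ly]$, so its complement in $\{v\ge y\}$ is contained in the union of your three regions.
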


\begin{proof}
The lemma follows by estimating the contributions of the top, bottom, left and right parts of $\Hbar \setminus Q_L(x - iy)$ separately and adding them up. We leave the details to the reader.
\end{proof}

 Lemma \ref{square-locality} says that to determine the value of  $(\mathcal S \mu)'/\rho_{\mathbb{H}}$ at a point $z \in \mathbb{H}$, up to small error,
 it suffices to know the values of $\mu$ in a neighbourhood of $\bar z$.
In particular, if $\mu_1$ and $\mu_2$ are two Beltrami coefficients that agree on $Q_L(x-iy)$ with $L$ large, then the difference
$|((\mathcal S\mu_1 - \mathcal S\mu_2)'/\rho_{\mathbb{H}})(z)|$
is small.

\begin{remark}
It may seem more natural to work with round neighbourhoods of $x-iy$, i.e. to assume that $\mu$ vanishes on
 $\{w \in \Hbar : d_{\Hbar}(w, x-iy) < L\}$. However, in this
case, the estimate (\ref{eq:square-locality}) is only $\le CLe^{-L}$.
\end{remark}

We now come to the main result of this section.

\begin{lemma}
\label{boxcart5}
Suppose $\mu_1$ and $\mu_2$ are two Beltrami coefficients on $\overline{\mathbb{H}}$ with $\|\mu_i\|_\infty \le 1$, $i=1,2$. If $\mu_1 = \mu_2$ agree on 
an $(n, \alpha)$-box $\overline{B}$ with $\alpha \ge 1$, then
 \begin{equation}
 \label{eq:boxcart5}
\Biggl | \,
 \fint_B \, \biggl |\frac{2(\mathcal S\mu_1)'}{\rho_{\mathbb H}}(z) \biggr |^2 \dzy \, - \,
  \fint_B \, \biggl |\frac{2(\mathcal S\mu_2)'}{\rho_{\mathbb H}}(z) \biggr |^2 \dzy \, \Biggr | \, \le \, \frac{C_1}{\log n}.
  \end{equation}
\end{lemma}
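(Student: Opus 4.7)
The plan is to partition $B$ into a ``deep interior'' where the pointwise locality of Lemma~\ref{square-locality} forces the two integrands to almost agree, together with a thin ``boundary layer'' where only the crude $L^\infty$ bound of Lemma~\ref{qbounds-H} is used. Set $\nu := \mu_1 - \mu_2$, so that $\|\nu\|_\infty \le 2$ and $\nu \equiv 0$ on $\overline B$. The algebraic identity $|a|^2 - |b|^2 = (|a|-|b|)(|a|+|b|)$, combined with the uniform estimate $\bigl|2(\mathcal S\mu_i)'/\rho_{\mathbb H}\bigr| \le 8/\pi$, shows that the pointwise difference of the two integrands is at most $(16/\pi)\bigl|2(\mathcal S\nu)'/\rho_{\mathbb H}\bigr|$. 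This reduces the task to bounding the $B$-average of this last quantity.

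Next, using affine scaling invariance of the measure $\frac{|dz|^2}{y}$, I would normalize so that $B = [0,\alpha]\times[1/n,1]$ and $\overline B = [0,\alpha]\times[-1,-1/n]$. A direct comparison shows that the containment $Q_L(x-iy) \subset \overline B$ holds precisely when
$$e^L/n \le y \le e^{-L} \qquad \text{and} \qquad e^L y \le x \le \alpha - e^L y.$$
Call this good subset $B_L$. On $B_L$, Lemma~\ref{square-locality} applied to $\nu$ gives the pointwise bound $\bigl|2(\mathcal S\nu)'/\rho_{\mathbb H}(z)\bigr| \le C e^{-L}$. A direct computation shows that the hyperbolic area of the complement $B\setminus B_L$ splits into the two horizontal strips where $y$ is out of range, contributing total hyperbolic area $2\alpha L$, and two lateral strips contributing at most a universal constant — the key integral being $\int_{e^L/n}^{e^{-L}} 2e^L\,dy \le 2$, where the cancellation $e^L\cdot e^{-L}=1$ is what keeps the lateral contribution bounded. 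Since the total hyperbolic area of $B$ equals $\alpha \log n$ and $\alpha \ge 1$, the bad fraction is at most $(2L + C)/\log n$.

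Putting the two regimes together, the left-hand side of \eqref{eq:boxcart5} is at most $C'\bigl(L/\log n + e^{-L}\bigr)$ for an absolute constant $C'$. Optimizing in the free parameter $L$ (roughly $L \sim \log \log n$) yields the stated decay, with any $\log\log n$ factor absorbed into $C_1$ once $n$ is bounded away from $2$, and the trivial estimate $2(8/\pi)^2$ handling the small-$n$ regime. The main delicate point I expect is verifying that the lateral strips contribute only $O(1)$ hyperbolic area instead of $O(L)$; this is precisely where the hypothesis $\alpha \ge 1$ is used, since otherwise the two lateral strips could overlap and produce an extra logarithmic term. Everything else is bookkeeping between the two regimes.
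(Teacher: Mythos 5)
Your algebraic reduction (via $\bigl||a|^2-|b|^2\bigr|\le|a-b|(|a|+|b|)$ and Lemma~\ref{qbounds-H}) and the geometric description of where $Q_L(\bar z)\subset\overline B$ are both correct and match the paper's first step. The gap is in the final optimization. Your two-regime split with a \emph{single} value of $L$ yields
$$\fint_B\biggl|\frac{2(\mathcal S\nu)'}{\rhoH}\biggr|\dzy \;\lesssim\; e^{-L} \;+\; \frac{L}{\log n},$$
and no choice of $L$ makes this $O(1/\log n)$: to control the first term you need $L\gtrsim\log\log n$, which makes the second term $\gtrsim(\log\log n)/\log n$. That extra factor of $\log\log n$ grows unboundedly with $n$ and \emph{cannot} be absorbed into an absolute constant $C_1$, contrary to what you assert; the phrase ``once $n$ is bounded away from $2$'' does not help, since $n\to\infty$ is exactly the regime of interest in Lemma~\ref{boxcart}.

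The fix is to let $L$ depend on the point: apply Lemma~\ref{square-locality} at each $z=x+iy\in B$ with the largest admissible $L(z)$, so that
$$\biggl|\frac{2(\mathcal S\nu)'}{\rhoH}(z)\biggr|\;\le\; C\min\Bigl\{1,\ e^{-L(z)}\Bigr\},\qquad e^{-L(z)}=\max\Bigl\{\tfrac{1}{ny},\ y,\ \tfrac{y}{x},\ \tfrac{y}{\alpha-x}\Bigr\},$$
and integrate this bound over $B$ against $\dzy$. Splitting the maximum into four terms, each of $\int_B\min\{1,1/(ny)\}\dzy$ and $\int_B\min\{1,y\}\dzy$ is $\le\alpha$, while the lateral contributions $\int_B\min\{1,y/x\}\dzy$ and $\int_B\min\{1,y/(\alpha-x)\}\dzy$ are each $O(1+\log\alpha)$. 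Dividing by the total mass $\alpha\log n$ and using $\alpha\ge1$ (so that $(\alpha+1+\log\alpha)/\alpha$ is bounded) gives the clean $C_1/\log n$. This point-dependent integration is what the paper's phrase ``follows from Lemma~\ref{square-locality} and integration'' is compressing.
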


\begin{proof}
Without loss of generality, we may assume that $B = [0,\alpha] \times [1/n,1]$.
From the elementary identity $\bigl | |a|^2 -|b| ^2 \bigr| = |a-b| \cdot |a+b|$ and Lemma \ref{qbounds-H}, it follows that the left hand side of (\ref{eq:boxcart5}) is bounded by
\begin{equation}
 \label{eq:boxcart6}
\frac{32}{\pi} \fint_{ B} \, \biggl |\frac{\bigl (\mathcal S(\mu_1-\mu_2) \bigr)'}{\rhoH}(z) \biggr |  \dzy.
\end{equation}
 The claim now follows from  Lemma \ref{square-locality} and integration.
\end{proof}

\section{Box Lemma}
\label{sec:box}

In this section, we show the infinitesimal version of the box lemma:

\begin{lemma}
\label{boxcart}
{\em (i)} 
For any Beltrami coefficient $\mu$ with $|\mu| \le \chi_\mathbb{\overline{H}}$ and
 $(n, \alpha)$-box $\square \subset \mathbb{H}$ with $\alpha \ge 1$, 
 \begin{equation}
 \label{eq:boxcart}
 \fint_{\square} \, \biggl |\frac{2(\mathcal S\mu)'}{\rhoH}(z) \biggr |^2 \, \frac{|dz|^2}{y} \, < \, \Sigma^2 + \frac{C}{\log n}.
 \end{equation}
 
 {\em (ii)} Conversely, for $n \ge 1$, there exists a Beltrami coefficient $\mu$, periodic with respect to the $n$-adic grid, which satisfies
 \begin{equation}
 \label{eq:boxcart-local-converse}
 \fint_{\square} \, \biggl |\frac{2(\mathcal S\mu)'}{\rho_{\mathbb{H}}}(z) \biggr |^2 \, \frac{|dz|^2}{y}  \, > \, \Sigma^2 - \frac{C}{\log n}
 \end{equation}
 on every box $\square \in \widehat{\mathscr G_{n}}$.
\end{lemma}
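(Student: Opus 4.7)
Two ingredients drive the proof: the locality Lemma~\ref{boxcart5}, which shows that if two Beltrami coefficients agree on a box $\overline{B}$ of aspect ratio $\alpha \ge 1$, then their box-averages of $F(z) := |2(\mathcal S \mu)'(z)/\rhoH|^2$ over $B$ differ by at most $C/\log n$; and the scaling identity $(\mathcal S \mu)'(z/n) = n\,(\mathcal S\mu)'(z)$ valid whenever $\mu(z/n) = \mu(z)$, which translates into the invariance $F(z/n) = F(z)$ of the integrand.

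For part~(i), I would first reduce general $\alpha \ge 1$ to the adic case $\alpha = 1$. The change of variables $z' := z/\alpha$, $\tilde\mu(z') := \mu(\alpha z')$ carries a type $(n,\alpha)$ box $B$ to the type $(n,1)$ box $B' := B/\alpha$; a direct computation using $(\mathcal S\tilde\mu)'(z') = \alpha\,(\mathcal S \mu)'(\alpha z')$ shows that $F_{\tilde\mu}(z') = F_\mu(\alpha z')$, and one checks that the box-average measure transforms compatibly to give
$$
\fint_B F_\mu \,\frac{|dz|^2}{y} \;=\; \fint_{B'} F_{\tilde\mu}\,\frac{|dz'|^2}{y'}.
$$
Since $|\tilde\mu| \le \chi_{\Hbar}$, it suffices to handle $\alpha = 1$. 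Let $\mu_{\per}$ be the $\overline{\mathscr G_n}$-periodic extension of $\mu|_{\overline B}$ (using the similarity structure of the grid). Lemma~\ref{boxcart5} bounds the change in box average by $C/\log n$. For the periodic $\mu_{\per}$, the grid symmetries give $\mu_{\per}(z/n) = \mu_{\per}(z)$ and horizontal $1$-periodicity, which transfer to the analogous symmetries of $F_{\mu_\per}$. Setting $H(y) := \int_0^1 F_{\mu_\per}(x,y) \, dx$, a short calculation using $F(x, y/n) = F(nx, y)$ and unfolding one horizontal period yields $H(y/n) = H(y)$, i.e.~$H$ is $\log n$-periodic in $\log y$. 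Hence $\int_{n^{-N}}^{1} H(y)\,dy/y = N \int_{1/n}^1 H(y)\, dy/y$ for every $N$, and the limsup in the definition of $\sigma^2_{[0,1]}$ is saturated at a single period:
$$
\fint_B F_{\mu_\per}\,\frac{|dz|^2}{y} \;=\; \frac{1}{\log n} \int_{1/n}^1 H(y)\,\frac{dy}{y} \;=\; \sigma^2_{[0,1]}(\mathcal S \mu_{\per}) \,\le\, \Sigma^2.
$$

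For part~(ii), I would invert this construction. Choose $\mu^*$ with $|\mu^*| \le \chi_{\Hbar}$ and $\sigma^2_{[0,1]}(\mathcal S \mu^*) > \Sigma^2 - 1/\log n$. The limsup definition produces $N$ with $\frac{1}{N \log n} \int_0^1\int_{n^{-N}}^1 F_{\mu^*}\,dxdy/y > \Sigma^2 - 2/\log n$, and a pigeonhole over the $N$ shells $\{y \in [n^{-j-1}, n^{-j}]\}$ followed by an averaging over horizontal adic boxes at level $j$ selects some $B_{j,k}$ with $\fint_{B_{j,k}} F_{\mu^*}\,|dz|^2/y > \Sigma^2 - 2/\log n$. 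Rescaling via the similarity $B_{j,k} \to B_0$, setting $\mu^{**}(z) := \mu^*((z+k)/n^j)$ on the reflection of $B_0$, and then extending $\overline{\mathscr G_n}$-periodically produces the candidate $\mu$. The box-average identity from part~(i) combined with Lemma~\ref{boxcart5} yields $\fint_{B_0} F_\mu\,|dz|^2/y > \Sigma^2 - C/\log n$. The scaling invariance $F_\mu(z/n) = F_\mu(z)$ propagates this to every adic box; for a dominated box $\square \in \widehat{\mathscr G_n}$ with vertical shift parameter $\theta \in (1/n, 1]$, the $\log n$-periodicity of $H$ in $\log y$ forces $\fint_\square F_\mu\,|dz|^2/y$ to coincide with the adic value regardless of $\theta$, since we integrate $H$ over a $\log y$-interval of length exactly $\log n$.

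The main obstacle I anticipate is the bookkeeping in part~(ii): one must arrange the three losses --- from the approximation $\sigma^2_{[0,1]}(\mathcal S \mu^*) \approx \Sigma^2$, from the pigeonhole across $N$ levels, and from Lemma~\ref{boxcart5} --- to telescope into a single $C/\log n$ bound with $C$ independent of $n$. Once the two scaling identities above are firmly in place, the rest of the argument is essentially mechanical.
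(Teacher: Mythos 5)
Your overall strategy matches the paper's: restrict $\mu$ to a box, periodize with respect to a grid, use Lemma~\ref{boxcart5} to control the change, and relate box averages to $\sigma^2_{[0,1]}$ via the C\'esaro formula~(\ref{eq:mcm-ca}). But two steps in your write-up are wrong as stated, and the first one is a genuine gap.

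\emph{The reduction to $\alpha = 1$ is a no-op.} The map $z' = z/\alpha$ is a similarity, and the type $(n,\alpha)$ of a box is \emph{defined} up to similarity. Concretely, $B' = B/\alpha = [0,1]\times[1/(n\alpha),1/\alpha]$ is similar to $[0,\alpha]\times[1/n,1]$ (multiply by $\alpha$), so $B'$ still has type $(n,\alpha)$, not $(n,1)$. Your computation $\fint_B F_\mu = \fint_{B'} F_{\tilde\mu}$ is correct, but it just restates similarity-invariance; it does not let you assume $\alpha = 1$. This matters because the periodization you then invoke is with respect to the $n$-adic grid $\mathscr G_n$, which only contains boxes of type $(n,1)$. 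The fix is what the paper actually does: for part~(i), periodize with respect to the grid of type $(n,\alpha)$ boxes generated by $\square$ itself (the dilation $z\mapsto nz$ is still an automorphism of this grid, so your scaling argument survives). The $n$-adic grid $\mathscr G_n$ is only needed for part~(ii), where producing an $\mathscr G_n$-periodic coefficient is part of the statement.

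\emph{The exact identity $H(y/n) = H(y)$ does not hold.} Your derivation uses that $F$ is $1$-periodic in $x$, which would require $\mu_{\per}$ to be globally $1$-periodic under $z\mapsto z+1$. But $z\mapsto z+1$ is a grid automorphism of $\mathscr G_n$ only on levels $k\ge0$ (i.e.~on $\{|\im z|<1\}$); for $|\im z|>1$ the boxes are wider than $1$ and this symmetry breaks. Since $(\mathcal S^\#\mu_{\per})'$ is nonlocal, $F$ is not exactly $1$-periodic, so $\fint_B F_{\mu_{\per}} = \sigma^2_{[0,1]}(\mathcal S^\#\mu_{\per})$ only holds up to an error that vanishes as you take the $\limsup$ at depth (locality pushes the non-periodic tail to infinity in hyperbolic distance). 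This does not break the proof --- the scale invariance $F(nz)=F(z)$ is exact and carries the box average exactly to every depth, and the translation error decays --- but the cleaner route, and the one the paper takes, is to argue by contradiction: if $\fint_\square > \Sigma^2 + C/\log n$ with $C > C_1$, then after periodizing every grid-box average exceeds $\Sigma^2 + \varepsilon$ by Lemma~\ref{boxcart5}, and (\ref{eq:mcm-ca}) forces $\sigma^2_{[0,1]} \ge \Sigma^2 + \varepsilon$, contradicting the definition of $\Sigma^2$. That direction only needs a one-sided comparison between the C\'esaro average and the box averages, which sidesteps the boundary-effect bookkeeping entirely.

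Part~(ii) of your proposal is essentially the paper's argument; your pigeonhole plus periodize plus Lemma~\ref{boxcart5} is what the paper does, and your concern about telescoping the three $O(1/\log n)$ losses is exactly the right thing to track. The final step --- extending the estimate from $\mathscr G_n$ to all of $\widehat{\mathscr G_n}$ --- is stated without proof in the paper too (``a careful inspection reveals''), and your $\log n$-periodicity-of-$H$ argument is the right intuition for it, with the same caveat about approximate rather than exact periodicity.
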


\begin{proof}
(i) Assume for the sake of contradiction that there is a box $\square \subset \mathbb{H}$ and a Beltrami coefficient $\mu$ for which
 \begin{equation}
 \fint_{\square} \, \biggl |\frac{2(\mathcal S\mu)'}{\rhoH}(z) \biggr |^2 \dzy \, > \, \Sigma^2 + \frac{C}{\log n},
 \end{equation}
 with $C > C_1$ from  Lemma \ref{boxcart5}.  Let $\mathscr G = \bigcup_{j=1}^\infty \overline{\square}_j$ be a grid containing $\overline{\square}$ and form the Beltrami coefficient $\mu_{\per}$ by restricting $\mu$ to $\overline{\square}$
 and periodizing with respect to $\mathscr G$.
According to Lemma \ref{boxcart5}, we would have
 \begin{equation}
 \label{eq:boxcart7}
 \fint_{\square_j} \, \biggl |\frac{2(\mathcal S\mu_{\per})'}{\rhoH}(z) \biggr |^2 \dzy \, > \, 
 \Sigma^2 + \varepsilon, \quad  \text{for all }\, \square_j \in \mathscr G.
  \end{equation}
In view of (\ref{eq:mcm-ca}), this implies $\sigma^2_{[0,1]}(\mathcal S^\#\mu_{\per}) > \Sigma^2 + \varepsilon$, which contradicts the definition of $\Sigma^2$.
  
(ii)
Conversely, suppose $\nu$ is a Beltrami coefficient with $$|\nu| \le \chi_{\overline{\mathbb{H}}}
\quad \text{and} \quad \sigma^2_{[0,1]}(\mathcal S^\#\nu) \ge \Sigma^2 - \varepsilon.$$ 
Consider the $n$-adic grid $\mathscr G_{n}$.
By the pigeon-hole principle, there exists an $n$-adic box $\square$
for which the integral in (\ref{eq:boxcart}) is at least $\Sigma^2 - \varepsilon$.
Restricting $\nu$ to $\square$ and periodizing over $n$-adic boxes produces a Beltrami coefficient $\nu_{\per}$ which 
satisfies
\begin{equation}
\label{eq:std-box}
 \fint_{\square_j} \, \biggl |\frac{2(\mathcal S\nu_{\per})'}{\rho_{\mathbb{H}}}(z) \biggr |^2 \, \frac{|dz|^2}{y}  \, > \, \Sigma^2 - \frac{C}{\log n},  
 \quad \text{for all }\, \square_j \in \mathscr G_{n}.
\end{equation}
A careful inspection reveals that the above estimate holds for all boxes in the collection $\widehat{\mathscr G_{n}}$.
This completes the proof of Lemma \ref{boxcart}.
\end{proof}

\begin{remark}
Given a periodic Beltrami coefficient $\mu_{\per}$ on $\Hbar$ from (ii), we may multiply it by
the characteristic function of the strip $S = \{z \in \Hbar : |\im z| < 1\}$
to get a Beltrami coefficient $\mu_{\per} \cdot \chi_{S}$ on $\Hbar$ that is periodic under $z \to z+1$.
By construction, $\mu_{\per} \cdot \chi_{S}$ descends
to a Beltrami coefficient on the unit disk  via the exponential mapping, which is eventually-invariant under $z \to z^n$. Clearly, the asymptotic variance is unchanged in this process.
This proves the statement (\ref{eq:AIPP-fat}) from Theorem \ref{AIPP-fat}.
\end{remark}

\section[Locality of non-linearity]{Locality of $n_f/\rho_{\mathbb{H}}$}
\label{sec:gl-locality}


\begin{lemma} 
\label{boxcart-global}
{\em (i)} Fix $0 < k < 1$.
Given $\varepsilon > 0$, if $n \ge n(\varepsilon)$ is sufficiently large, then for any  $(n, \alpha)$-box $\square \subset \mathbb{H}$ with $\alpha \ge 1$ 
and any conformal map $f \in \mathbf{H}_k$,
 \begin{equation}
 \label{eq:boxcart2}
 \fint_{\square} \, \biggl |\frac{2n_f}{\rho_{\mathbb{H}}}(z) \biggr |^2 \, \frac{|dz|^2}{y} \, < \, \Sigma^2(k) + \varepsilon.
 \end{equation}
 
 {\em (ii)} Conversely, for any $\varepsilon > 0$, there exists a conformal map $f = \tilde w^\mu \in \mathbf{H}_k$, whose dilatation 
 $\mu = \bel f := \overline{\partial} f/\partial f$ is periodic with respect
to the $n$-adic grid for some $n \ge 1$,
and which satisfies
  \begin{equation}
 \label{eq:boxcart3}
 \fint_{\square} \, \biggl |\frac{2n_f}{\rho_{\mathbb{H}}}(z) \biggr |^2 \, \frac{|dz|^2}{y} \, > \, \Sigma^2(k) - \varepsilon
 \end{equation}
  on every box $\square \in \widehat{\mathscr G_{n}}$.
\end{lemma}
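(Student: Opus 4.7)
The plan is to rerun the periodization/contradiction scheme of Lemma \ref{boxcart}, with the nonlinear map $\mu\mapsto n_{\tilde w^\mu}$ in place of $\mu\mapsto(\mathcal S\mu)'$ and $\Sigma^2(k)$ in place of $\Sigma^2$. Two prerequisite statements would be proved at the start of Section \ref{sec:gl-locality}, parallel to Lemmas \ref{square-locality} and \ref{boxcart5}: a pointwise locality estimate saying that if $f_1,f_2\in\mathbf H_k$ have dilatations agreeing on the square $Q_L(\bar z)$ then
\[
\biggl|\,\biggl|\frac{2n_{f_1}(z)}{\rho_{\mathbb H}(z)}\biggr|-\biggl|\frac{2n_{f_2}(z)}{\rho_{\mathbb H}(z)}\biggr|\,\biggr|\le\omega_k(L),\qquad\omega_k(L)\to 0\ \text{as}\ L\to\infty,
\]
together with its box-level corollary: if $\mu_{f_1}=\mu_{f_2}$ on an $(n,\alpha)$-box $\overline{\square}$ with $\alpha\ge 1$, then $\bigl|\fint_{\square}|2n_{f_1}/\rho_{\mathbb H}|^2\,|dz|^2/y-\fint_{\square}|2n_{f_2}/\rho_{\mathbb H}|^2\,|dz|^2/y\bigr|\le\varepsilon_k(n)$ with $\varepsilon_k(n)\to 0$. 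The passage from the first bound to the second mimics Lemma \ref{boxcart5}, combining $||a|^2-|b|^2|=|a-b||a+b|$, the universal bound $|2n_f/\rho_{\mathbb H}|\le 6k$, and the observation that every $z$ in the interior of an $(n,\alpha)$-box has $Q_{L(n)}(\bar z)\subset\overline{\square}$ with $L(n)\to\infty$.

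For (i) I would argue by contradiction. If the claim fails, then for arbitrarily large $n$ some $f\in\mathbf H_k$ admits an $(n,\alpha)$-box $\square_0$ whose average exceeds $\Sigma^2(k)+\varepsilon$. Embed $\overline{\square_0}$ in a grid $\mathscr G=\{\overline{\square_j}\}$ and periodize $\mu_f|_{\overline{\square_0}}$ to a grid-periodic coefficient $\mu_{\per}$ with $\|\mu_{\per}\|_\infty\le k$, producing $f_{\per}:=\tilde w^{\mu_{\per}}\in\mathbf H_k$. The corollary above forces the average on each $\square_j$ to exceed $\Sigma^2(k)+\varepsilon/2$ once $n$ is large, and the Cesaro formula (\ref{eq:mcm-ca}) upgrades this to $\sigma^2_{[0,1]}(\log f_{\per}')>\Sigma^2(k)$, contradicting Lemma \ref{life-on-H}.

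For (ii), Lemma \ref{life-on-H} provides $f^*\in\mathbf H_k$ with $\sigma^2_{[0,1]}(\log(f^*)')>\Sigma^2(k)-\varepsilon/4$. For $n$ sufficiently large, (\ref{eq:mcm-ca}) yields a horizontal strip of hyperbolic height $\log n$ on which the $(|dz|^2/y)$-average of $|2n_{f^*}/\rho_{\mathbb H}|^2$ exceeds $\Sigma^2(k)-\varepsilon/2$. Such a strip tiles by boxes of $\widehat{\mathscr G_n}$, so pigeonhole selects an $n$-adic box $\square_0\in\mathscr G_n$ whose average is at least $\Sigma^2(k)-\varepsilon$. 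Periodizing $\mu_{f^*}|_{\overline{\square_0}}$ under the $n$-adic self-similarities gives a $k$-bounded, $\mathscr G_n$-periodic coefficient $\mu_{\per}$; the corollary then transfers the lower bound to every box of $\mathscr G_n$, and an inspection as in the proof of Lemma \ref{boxcart}(ii) extends this uniformly to all of $\widehat{\mathscr G_n}$.

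The main obstacle is the pointwise locality $\omega_k(L)\to 0$, since $\mu\mapsto f_\mu$ is genuinely nonlinear. A plausible route is to interpolate along $\mu_s=\mu_2+s(\mu_1-\mu_2)$ for $s\in[0,1]$, write $n_{f_1}(z)-n_{f_2}(z)=\int_0^1\frac{d}{ds}n_{f_s}(z)\,ds$, and note that the integrand is obtained by applying to $\mu_1-\mu_2$ a linear operator whose kernel decays like $|\zeta-z|^{-3}$ away from $\bar z$; since $\mu_1-\mu_2$ vanishes on $Q_L(\bar z)$, the same calculation as in Lemma \ref{square-locality} yields a bound uniform in $s$. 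Alternatively, one can set $g=f_1\circ f_2^{-1}$, use that $k$-QC maps are quasi-isometries in the hyperbolic metric to produce a hyperbolic square $Q_{L/K-C}(\overline{f_2(z)})$ inside $f_2(Q_L(\bar z))$ on which $\mu_g$ vanishes, estimate $|n_g/\rho_{\mathbb H}|(f_2(z))$ directly, and transfer back via the cocycle identity for non-linearity.
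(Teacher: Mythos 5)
Your overall scheme matches the paper's: establish a locality statement for the non-linearity $\mu\mapsto n_{\tilde w^\mu}$ analogous to Lemma~\ref{square-locality}, deduce a box-level version parallel to Lemma~\ref{boxcart5}, and then rerun the periodization/Ces\`aro argument of Lemma~\ref{boxcart} with $\Sigma^2(k)$ in place of $\Sigma^2$. The paper also takes this route, reducing everything to the locality statement (Lemma~\ref{compactness-argument}) and then saying ``in view of the arguments from the previous section.''

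Where you diverge is in \emph{how} the locality statement is proved, and here your two proposed routes each have issues. The paper deliberately sidesteps quantitative estimates: it observes that no rate in $L$ is needed, and proves Lemma~\ref{compactness-argument} by a compactness argument --- Stoilow factorization $\tilde w^{\nu_n}=H_n\circ\tilde w^{\mu_n}$, normal families, and the fact that the dilatation supports of $H_{n_j}$ shrink to a null set, forcing $H_{n_j}\to\mathrm{id}$. Your route (a) (interpolation $\mu_s=\mu_2+s(\mu_1-\mu_2)$) claims that $\frac{d}{ds}n_{f_s}$ is a fixed linear operator with kernel decaying like $|\zeta-z|^{-3}$ applied to $\mu_1-\mu_2$. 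This is only true at $s=0$; for $s>0$ the variation of $n_{f_s}$ is the Beurling-type kernel transported by $f_s$ (equivalently, via the cocycle identity one is estimating the non-linearity of an infinitesimal perturbation $g_\delta$ on the Jordan domain $f_s(\mathbb{H})$, not on $\mathbb H$), so the $|\zeta-z|^{-3}$ estimate of Lemma~\ref{square-locality} does not apply directly and the uniformity in $s$ does not follow ``by the same calculation.'' Your route (b) correctly identifies the missing ingredients (Stoilow factorization plus the quasi-isometry property), but even so one needs a version of Lemma~\ref{square-locality} on a quasidisk rather than $\mathbb H$; making this precise is the content of the cited \cite[Section 5]{BJ}, which the paper invokes only in a remark for the exponential rate and explicitly avoids in the proof proper. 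So the qualitative compactness argument is what makes the proof short; your quantitative routes would prove a stronger statement than needed and as written leave a genuine gap in the kernel/conjugation step.

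Two smaller points worth noting. First, after periodizing $\mu_f|_{\overline{\square}_0}$ to $\mu_{\per}$, the resulting $f_{\per}$ does not agree with $f$ anywhere, so the locality corollary is what transports the average from $f$ on $\square_0$ to $f_{\per}$ on $\square_0$ and then to all $\square_j\in\mathscr G$; you have this right, but it is the place where the locality estimate is genuinely used, so its proof must come first. Second, in (ii), before applying (\ref{eq:mcm-ca}) you should record (as the paper does in the remark after Lemma~\ref{boxcart} and in the proof of Theorem~\ref{main-thm}) that one may restrict $\mu_{\per}$ to the strip $\{|\mathrm{Im}\,z|<1\}$ and make it $z\mapsto z+1$ periodic, so that $f=\tilde w^\mu$ indeed lies in $\bH^1_k$ and the exponential transform produces a genuine element of $\bSigma_k$; the pure $\mathscr G_n$-periodic coefficient does not itself lie in the setting of Lemma~\ref{life-on-H}.
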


Since we do not require a quantitative estimate, it suffices to give a simple compactness argument. In view of the arguments from the previous section,
it is enough to show:
\begin{lemma}
\label{compactness-argument}
Suppose $\tilde w^{\mu_1}$ and $\tilde w^{\mu_2} \in \bH_k$ are two $k$-quasiconformal mappings that are conformal on the upper half-plane.
For any $\varepsilon > 0$, there exists $R$ sufficiently large so that if 
$\mu_1 = \mu_2$
on $B_{\hyp}(\overline{w}_0, R) = \{ w \in \Hbar : d_{\mathbb{H}}(w, \overline{w}_0) > R\}$, $w_0 \in \mathbb{H}$, then
\begin{equation}
\label{eq:compactness-argument}
\biggl |\frac{n_{\tilde w^{\mu_1}} - n_{\tilde w^{\mu_2}}}{\rho_{\mathbb{H}}} (w_0)\biggr |< \varepsilon.
\end{equation}
\end{lemma}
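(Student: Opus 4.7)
The plan is to argue by contradiction and compactness. Suppose the conclusion fails: there exist $\varepsilon_0 > 0$ and sequences $R_n \to \infty$, $w_0^{(n)} \in \mathbb{H}$, and Beltrami coefficients $\mu_1^{(n)}, \mu_2^{(n)}$ supported in $\Hbar$ with $\|\mu_i^{(n)}\|_\infty \le k$, agreeing on $B_{\hyp}(\overline{w_0^{(n)}}, R_n)$, yet with
\[
\Bigl| \frac{n_{\tilde w^{\mu_1^{(n)}}} - n_{\tilde w^{\mu_2^{(n)}}}}{\rho_{\mathbb{H}}} \Bigr|(w_0^{(n)}) \ge \varepsilon_0.
\]
The strategy is to renormalize each pair so that the base point becomes $i$, and then to extract subsequential limits of both the dilatations and the associated normalized solutions.

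Write $w_0^{(n)} = x_n + iy_n$ and let $L_n(z) := y_n z + x_n$ be the affine self-map of $\mathbb{H}$ sending $i$ to $w_0^{(n)}$ and fixing $\infty$. Define $\hat\mu_i^{(n)} := L_n^* \mu_i^{(n)}$ and $g_i^{(n)} := \tilde w^{\hat\mu_i^{(n)}} \in \bH_k$. Since $L_n$ is a hyperbolic isometry of $\Hbar$, $\hat\mu_1^{(n)} = \hat\mu_2^{(n)}$ on $B_{\hyp}(-i, R_n)$. The map $\tilde w^{\mu_i^{(n)}} \circ L_n$ has the same dilatation $\hat\mu_i^{(n)}$, is conformal on $\mathbb{H}$, and fixes the three real points $L_n^{-1}(0), L_n^{-1}(1), \infty$; hence $g_i^{(n)}$ differs from it by a M\"obius transformation $M_i^{(n)}$ fixing $\infty$, i.e.\ an \emph{affine} map. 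Since affine post-composition leaves the non-linearity unchanged and $n_{L_n} \equiv 0$, the cocycle rule $n_{f \circ L} = L' \cdot (n_f \circ L) + n_L$ combined with $\rho_{\mathbb{H}}(L_n z)|L_n'(z)| = \rho_{\mathbb{H}}(z)$ yields
\[
\Bigl| \frac{n_{g_1^{(n)}} - n_{g_2^{(n)}}}{\rho_{\mathbb{H}}} \Bigr|(i) = \Bigl| \frac{n_{\tilde w^{\mu_1^{(n)}}} - n_{\tilde w^{\mu_2^{(n)}}}}{\rho_{\mathbb{H}}} \Bigr|(w_0^{(n)}) \ge \varepsilon_0,
\]
transferring the contradiction hypothesis to a sequence in $\bH_k$ with a fixed base point.

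The family $\bH_k$ is normal by the classical quasiconformal distortion theorems, so a subsequence of $g_i^{(n)}$ converges locally uniformly on $\mathbb{C}$ to some $g_i^* \in \bH_k$. By weak-$*$ compactness in $L^\infty$, $\hat\mu_i^{(n)} \to \hat\mu_i^*$ along a further subsequence, and continuous dependence of the normalized QC solution on its dilatation forces $g_i^* = \tilde w^{\hat\mu_i^*}$. For any compact $K \subset \Hbar$, $K \subset B_{\hyp}(-i, R_n)$ once $n$ is large, so $\hat\mu_1^{(n)} = \hat\mu_2^{(n)}$ on $K$; passing to weak-$*$ limits gives $\hat\mu_1^* = \hat\mu_2^*$ almost everywhere in $\Hbar$, and uniqueness of the normalized solution forces $g_1^* = g_2^*$. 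Since each $g_i^{(n)}$ is holomorphic on $\mathbb{H}$ and $g_i^{(n)} \to g_i^*$ locally uniformly, Weierstrass yields $n_{g_i^{(n)}}(i) \to n_{g_i^*}(i)$; consequently $|(n_{g_1^{(n)}} - n_{g_2^{(n)}})/\rho_{\mathbb{H}}|(i) \to 0$, contradicting the lower bound from the previous step.

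The principal technical point is the renormalization: the specific choice of $L_n$ fixing $\infty$ is essential, as only then is the M\"obius normalizer $M_i^{(n)}$ guaranteed to be affine, which is precisely what makes the non-linearity difference transform cleanly (as a genuine $(1,0)$-form) under the change of coordinates. The remaining ingredients---normality of $\bH_k$, weak-$*$ continuity of $\mu \mapsto \tilde w^\mu$, and Weierstrass convergence of holomorphic derivatives---are classical.
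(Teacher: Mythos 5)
Your renormalization to the base point $i$ via the affine maps $L_n$ and the observation that the hyperbolically normalized non-linearity difference transfers cleanly under such conjugation are correct, and this matches the paper's opening reduction. The gap is in the sentence ``continuous dependence of the normalized QC solution on its dilatation forces $g_i^* = \tilde w^{\hat\mu_i^*}$.'' This step invokes weak-$*$ continuity of $\mu \mapsto \tilde w^\mu$, which is false: rapidly oscillating dilatations $\mu_n$ with $\mu_n \rightharpoonup 0$ weak-$*$ in $L^\infty$ can produce $\tilde w^{\mu_n}$ converging locally uniformly to a non-identity map whose dilatation is an ``effective'' ($G$-limit) coefficient that differs from the weak-$*$ limit. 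This is precisely the homogenization phenomenon for the Beltrami equation (see the $G$-convergence discussion in \cite{AIM}). The standard continuity results require convergence of $\mu_n$ in measure or a.e.\ (with a uniform bound $\|\mu_n\|_\infty \le k<1$), and weak-$*$ convergence does not yield a.e.\ convergence along any subsequence. After extracting the normal-family limits $g_1^*$ and $g_2^*$, your argument therefore does not show $g_1^* = g_2^*$, and the conclusion does not follow.

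The paper closes precisely this gap with a Stoilow factorization: writing $\tilde w^{\nu_n} = H_n \circ \tilde w^{\mu_n}$, the map $H_n$ is a normalized quasiconformal homeomorphism with $\|\bel H_n\|_\infty \le 2k/(1+k^2) < 1$ and $\supp(\bel H_n) \subseteq \tilde w^{\mu_n}(\Hbar \setminus B_n)$. Since $\tilde w^{\mu_{n_j}} \to \tilde w$ locally uniformly and $\Hbar \setminus B_n$ recedes to $\mathbb{R}$ in any fixed compact set, the supports of $\bel H_{n_j}$ shrink to a subset of the quasicircle $\tilde w(\mathbb{R})$, which has measure zero; hence $\bel H_{n_j} \to 0$ a.e., and the a.e.-convergence form of continuity gives $H_{n_j} \to \id$. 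This forces the two subsequential limits to agree without ever needing to identify the limit dilatations. Your proposal can be repaired by replacing the weak-$*$ step with this compositional argument, but as written it is incomplete.
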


\begin{proof}
Since non-linearity is invariant under compositions with affine maps $z \to az +b$, $a>0$, $b \in \mathbb{R}$,
 it suffices to prove the lemma with $w_0 = i$.

To the contrary, suppose that one could find sequences of Beltrami coefficients $\{\mu_n\}$ and $\{\nu_n\}$, 
with $\mu_n = \nu_n$
on $B_n = B_{\hyp}(-i, n)$ and 
\begin{equation}
\label{eq:compactness-argument2}
\|\tilde w^{\mu_n} - \tilde w^{\nu_n}\|_{L^\infty(B)} > 1/n, \quad \text{where }B = \{w \in \mathbb{H}  : d_{\mathbb{H}}(w, i) < 1\}.
\end{equation}
 Since the collection of normalized quasiconformal mappings with dilatation bounded by $k$ forms a normal family, we can extract 
 a convergent subsequence
$\tilde w^{\mu_{n_j}} \to \tilde w$. 

Stoilow factorization allows us to write $\tilde w^{\nu_n} = H_n \circ \tilde w^{\mu_n}$, where $H_n$ is a normalized quasiconformal map
with $\supp(\bel {H_n}) \subseteq w^{\mu_n}(\mathbb{H} \setminus B_n)$ and $\|\bel {H_n}\|_\infty < \frac{2k}{1+k^2}$.
Since the supports of $\bel {H_{n_j}}$ shrink to $\tilde w(\mathbb{R})$ which has measure 0, the only possible limit of $H_{n_j}$ is the identity.
This rules out (\ref{eq:compactness-argument2}), thus proving the lemma.
\end{proof}

\begin{remark}
Using the methods of \cite[Section 5]{BJ}, one can show that (\ref{eq:compactness-argument}) decays exponentially in $R$, i.e.~$\lesssim e^{-\gamma R}$ for some $\gamma > 0$.
\end{remark}

\section{Estimating integral means}
\label{sec:integral-means}

In this section, we review the Becker-Pommerenke argument which gives the bound $D(k) \le 1 +  36\, k^2 + \mathcal O(k^3)$. We then 
 modify the argument to take advantage of the box lemma, allowing us to replace $36$ with $\Sigma^2$. 

\subsection{Becker-Pommerenke argument}

For convenience, we work in the subclass  $\mathbf{H}^{1}_k \subset \mathbf{H}_k$  of maps that satisfy the periodicity condition $f(z+1) = f(z)+1$.
In view of the exponential transform (\ref{eq:exponential-transform}), we are secretly working with a conformal map of the exterior unit disk.
Define
\begin{equation}
I_p(y) := \int_{0}^{1} |f'(x+iy)^p | \, dx, \qquad p \in \mathbb{C}.
\end{equation}
Since $\frac{\partial}{\partial y} = i(\frac{\partial}{\partial z} - \frac{\partial}{\partial \overline{z}})$, differentiation under the integral sign
shows
\begin{equation}
 I_p'(y) \, = \, -p \int_0^1 |f'(x+iy)^p | \, \im\biggl ( \frac{f''}{f'} \biggr) dx.
\end{equation}
In view of the normalization, $f' \to 1$, $f'' \to 0$ as $y \to \infty$, in which case $ I_p'(y) \to 0$.
To estimate $I_p$, we use a variant of Hardy's identity on the upper half-plane which says that for
a holomorphic function $g(z)$ with $g(z+1) =g(z)$,
\begin{equation}
\frac{d^2}{dy^2} \int_{0}^{1} |g(x+iy)| \, dx =
  \int_{0}^{1} \Delta | g(re^{i\theta}) | \, dx.
\end{equation}
Indeed, $\frac{d^2}{dx^2}  \int_{0}^{1} |g(x+iy)| \, dx = 0$ by periodicity.
Applying Hardy's identity to $f'(z)^p$ gives
\begin{equation}
\label{eq:beck-pomm}
 I_p''(y) \, = \, |p|^2 \int_0^1 |f'(x+iy)^p | \,  \biggl | \frac{f''}{f'} \biggr |^2 dx.
\end{equation}
In particular, $I_p'(y)$ is increasing as $y \to \infty$ which shows that $I_p'(y) \le  0$. 
Replacing non-linearity by its supremum bound, we obtain
\begin{equation}
\label{eq:hardy-bound2}
I_p''(y) \le \frac{9\, k^2|p|^2}{y^2} \, I_p(y).
\end{equation}
From the differential inequality (\ref{eq:hardy-bound2}) together with $I_p(y) \ge 0$, $I'_p(y) \le 0$, it follows that
$$
I_p(y) \le C \cdot y^{-9 k^2|p|^2}, \qquad k \in [0,1), \quad p \in \mathbb{C},
$$
see Lemma \ref{diff-ineq}(i) below. In other words, $\beta_{f}(p) \le 9\, k^2|p|^2$. Anti-symmetrization (\ref{eq:anti-symmetrization}) and the equation $\beta_f(\Mdim \partial \Omega) = \Mdim \partial \Omega - 1$
 yield the dimension bound $D(k) \le 1 +  36\, k^2 + \mathcal O(k^3)$.

\subsection{A differential inequality}

To make use of (\ref{eq:hardy-bound2}), we used an elementary fact about differential inequalities.
 If necessary, the reader may consult \cite[Proposition 8.7]{Pomm}.

\begin{lemma}
\label{diff-ineq}
{\em (i)} Suppose $u(y)$ is a $C^2$ function on $(0,y_0)$ with $$u \ge 0 \qquad \text{and} \qquad  u' \le 0$$
satisfying
\begin{equation}
\label{eq:diff-inequality}
u''(y) \le \frac{\alpha u}{y^2},
\end{equation}
for some constant $\alpha > 0$. Then, $$u(y) \, \le \, v(y) \, = \, C y^{-\beta}, \quad \text{where } \beta^2 + \beta = \alpha,$$
when $C > 0$ is sufficiently large so that $u(y_0) \le v(y_0)$ and $|u'(y_0)| \le |v'(y_0)|$.

\medskip

\noindent
{\em (ii)} Conversely, if (\ref{eq:diff-inequality}) is replaced by
\begin{equation}
\label{eq:diff-inequality2}
u''(y) \ge \frac{\alpha u}{y^2},
\end{equation}
then
$$u(y) \, \ge \, v(y) \, = \, c y^{-\beta},$$
when $c > 0$ is sufficiently small so that $u(y_0) \ge v(y_0)$ and $|u'(y_0)| \ge |v'(y_0)|$.
\end{lemma}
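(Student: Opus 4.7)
The plan is to reduce the differential inequality to a monotonicity statement via the substitution $u = vw$, where the comparison function $v(y) = Cy^{-\beta}$ is the extremal solution that saturates the inequality. A direct computation shows that $v$ exactly solves $v'' = \alpha v/y^2$ precisely when $\beta(\beta+1) = \alpha$, so $v$ is a genuine solution of the critical ODE (and a positive real $\beta$ exists since $\alpha>0$).

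Substituting $u = vw$ into $u'' - \alpha u/y^2 \le 0$ and using $v''=\alpha v/y^2$ produces the cancellation
\begin{equation*}
u'' - \frac{\alpha u}{y^2} = \bigl(v''w + 2v'w' + vw''\bigr) - \frac{\alpha v w}{y^2} = 2v'w' + vw'' \le 0.
\end{equation*}
Multiplying by the positive factor $v$ rewrites this as $(v^2 w')' \le 0$, so $v^2 w'$ is non-increasing on $(0,y_0)$. The boundary hypotheses translate, via $w = u/v$ and $w' = (u'v-uv')/v^2$, into $w(y_0)\le 1$ and $w'(y_0)\ge 0$; the latter uses $0\le u(y_0)\le v(y_0)$ together with $v'(y_0)<0$ and the fact that $|u'(y_0)|\le |v'(y_0)|$ is equivalent to $u'(y_0)\ge v'(y_0)$ (since both derivatives are non-positive).

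Because $v^2 w'(y_0)\ge 0$ and $v^2 w'$ is non-increasing in $y$, the quantity $v^2 w'$ remains non-negative on all of $(0,y_0)$. Hence $w'\ge 0$, so $w$ is non-decreasing on $(0,y_0)$, giving $w(y)\le w(y_0)\le 1$, i.e.\ $u(y)\le v(y)$, as desired. Part (ii) follows from the same calculation with every inequality reversed: under (\ref{eq:diff-inequality2}) one gets $(v^2 w')'\ge 0$, while the reversed boundary hypotheses give $w(y_0)\ge 1$ and $w'(y_0)\le 0$, so $w\ge 1$ throughout $(0,y_0)$.

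The argument is elementary and I do not foresee a genuine obstacle; the only step that needs a little care is the bookkeeping of signs in the boundary conditions at $y_0$, since $u$ and $v$ are both decreasing and a mechanical reading of $|u'|\le |v'|$ would point the wrong way. The existence of the constant $C$ in part (i) satisfying both boundary hypotheses is automatic, because $v(y_0)=Cy_0^{-\beta}$ and $|v'(y_0)|=C\beta y_0^{-\beta-1}$ both sweep out $(0,\infty)$ as $C$ varies, and the same remark applies in part (ii).
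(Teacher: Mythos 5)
The overall strategy — write $u = vw$ with $v(y) = Cy^{-\beta}$ solving $v'' = \alpha v/y^2$, observe that $(v^2 w')' = v\,(u'' - \alpha u/y^2) \le 0$, and read off the conclusion from monotonicity — is a standard Wronskian comparison and is the right framework. The paper does not actually give a proof but cites Pommerenke, Prop.~8.7. However, your derivation of the boundary condition $w'(y_0) \ge 0$ has a genuine gap, and the claimed implication is false.

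Concretely, you assert that $0 \le u(y_0) \le v(y_0)$, $v'(y_0)<0$, and $u'(y_0)\ge v'(y_0)$ force
$w'(y_0) = \dfrac{u'(y_0)v(y_0)-u(y_0)v'(y_0)}{v(y_0)^2}\ge 0$.
Take $u(y_0)=0$ and $u'(y_0)=v'(y_0)<0$; then $w'(y_0)=u'(y_0)/v(y_0)<0$. This boundary data is attainable under all the lemma's hypotheses. For instance with $\alpha=2$, $\beta=1$, $y_0=1$, $u(y)=\tfrac{C}{3}\bigl(y^{-1}-y^2\bigr)$, and $v(y)=C/y$: here $u\ge 0$, $u'\le 0$, $u''=2u/y^2$, $u(1)=0\le v(1)$, $|u'(1)|=C=|v'(1)|$, yet $w'(1)=-1<0$. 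The Wronskian $v^2 w' = vu' - uv'$ starts out \emph{negative} at $y_0$, so non-increase does not give $w'\ge 0$ on $(0,y_0)$, and the argument that "$w$ is non-decreasing, hence $w\le w(y_0)\le 1$" collapses. The same difficulty appears in your part (ii), where $w'(y_0)\le 0$ can fail (e.g.\ $u(y_0)>v(y_0)$ and $u'(y_0)=v'(y_0)$ gives $w'(y_0)>0$). Notice also that in your argument the structural hypotheses $u\ge 0$, $u'\le 0$ never play an essential role beyond unpacking absolute values — a sign that the shortcut is too quick.

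The lemma is nevertheless true, and your framework can be repaired in two natural ways. (A) Interpolate through the solution of the ODE: let $\tilde u$ solve $\tilde u'' = \alpha\tilde u/y^2$ with $\tilde u(y_0)=u(y_0)$, $\tilde u'(y_0)=u'(y_0)$. The Wronskian $v\tilde u' - \tilde u v'$ is constant and equals $(vu' - uv')(y_0)$, while $vu' - uv'$ is non-increasing; hence $(u/v)' \ge (\tilde u/v)'$ on $(0,y_0)$ with equal values at $y_0$, giving $u\le \tilde u$. Then write $\tilde u = ay^{-\beta}+by^{\beta+1}$ and check from the boundary inequalities (which give $v(y_0)-\tilde u(y_0)\ge 0$ and $v'(y_0)-\tilde u'(y_0)\le 0$) that necessarily $a\le C$, from which $\tilde u \le v$ on $(0,y_0)$ follows by inspection of the explicit formula. (B) Alternatively, keep your formulation but actually integrate: since $v^2 w'$ is non-increasing, $w(y_0)-w(y) \ge (v^2w')(y_0)\int_y^{y_0} v(s)^{-2}\,ds$, and one computes that even when $(v^2w')(y_0)<0$ one has $|(v^2 w')(y_0)|\int_y^{y_0} v^{-2}\,ds \le \tfrac{\beta}{2\beta+1}\bigl(1-w(y_0)\bigr)$, whence $w(y)\le w(y_0)+\tfrac{\beta}{2\beta+1}(1-w(y_0))\le 1$. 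Either route closes the gap; as written, the proof does not.
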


\begin{remark}
When $\alpha > 0 $ is small, 
$
\beta = \alpha - \mathcal O(\alpha^2),
$
in which case $\alpha \approx \beta$.
\end{remark}

\subsection{Averaging over annuli}

Using the box lemma, we can give an improvement in the argument of Becker and Pommerenke.
Given an integer  $n \ge 1$, let $A(y)$ denote the rectangle $[0,1] \times [y/n, y]$ and $R = \log n$ be its hyperbolic height.
Consider the function
 \begin{equation}
 \label{eq:u-def}
u(y) \, := \,  \int_{y/n}^y I_p(h) \, \frac{dh}{h} \, = \, \int_1^{1/n} I_p(yh) \cdot \frac{dh}{h}.
\end{equation}
Since $\| \log f' \|_{\mathcal B(\mathbb{H})} \le 6$, we have $u(y) \asymp I_p(y)$, which allows us to
 compute the integral means spectrum of $f$ by measuring the growth of $u(y)$ as $y \to 0^+$.
 Differentiating (\ref{eq:u-def}) twice gives
\begin{equation}
\label{eq:improved-smirnov}
u''(y) \, = \,
  \int_{y/n}^y I_p''(h) \, \frac{dh}{h} \, = \, 
\frac{|p|^2}{4y^2}  \int_{A(y)}  | f'(z)^p | \,  \biggl | \frac{2n_f}{\rho_{\mathbb{H}}} \biggr |^2 \, \frac{dh}{h} \cdot dx.
\end{equation}

\begin{lemma}
\label{lemma-RHS}
Suppose that the average non-linearity of $f \in \mathbf{H}_k^1$ over any box $\square \in \widehat{\mathscr G_n}$ with
$\square \subset [0,1] \times [0, y_0]$ is bounded above by $M$.
Then,
\begin{equation}
\label{eq:lemma-RHS}
u''(y) \le M \exp(CRk|p|) \cdot \frac{|p|^2}{4y^2} \cdot u(y),
\qquad y \in (0, y_0),
\end{equation}
\end{lemma}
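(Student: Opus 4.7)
\smallskip

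The plan is to start from the exact formula \eqref{eq:improved-smirnov} for $u''(y)$ and to dominate the weight $|f'(z)^p|$ inside the integral by a local average, so that the hypothesis on box-averaged non-linearity can be applied box-by-box. Concretely, I would first fix $y < y_0$ and tile the rectangle $A(y) = [0,1] \times [y/n, y]$ by finitely many boxes $\square_1, \dots, \square_N$ from $\widehat{\mathscr G_n}$, using the strip-tiling property stated just before Section \ref{sec:inf-locality}: for every $y$, there is a unique parameter $\theta \in (1/n, 1]$ that realizes $[y/n, y]$ as the vertical extent of boxes dominated by $n$-adic boxes at some scale $n^{-k}$, and these tiles partition $[0,1]$ horizontally (the boundary case $y = n^{-k}$ reduces to $\mathscr G_n$ itself). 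All these tiles lie in $[0,1] \times [0, y_0]$, so the hypothesis applies.

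Next I would exploit the fact that each $\square_j$ has hyperbolic diameter $\lesssim R = \log n$. Since $\log f' \in \mathcal B(\mathbb{H})$ with $\|\log f'\|_{\mathcal B(\mathbb{H})} \le 6k$ (stated in Section \ref{sec:life-on-H}), the function $\log f'$ is $\mathcal O(k)$-Lipschitz in the hyperbolic metric, so for any reference point $z_j \in \square_j$,
\begin{equation*}
\bigl|\log|f'(z)^p| - \log|f'(z_j)^p|\bigr| \le C k R |p|, \qquad z \in \square_j.
\end{equation*}
Exponentiating, $|f'(z)^p|$ is squeezed within a factor $e^{CkR|p|}$ of $|f'(z_j)^p|$ throughout $\square_j$. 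Consequently, up to that multiplicative factor, one may replace $|f'(z)^p|$ inside the integral by the constant $|f'(z_j)^p|$; after this replacement the hypothesis of the lemma directly gives
\begin{equation*}
\int_{\square_j} |f'(z)^p| \, \biggl|\frac{2n_f}{\rho_{\mathbb H}}\biggr|^2 \frac{|dz|^2}{y} \, \le \, M\, e^{CkR|p|}\, |f'(z_j)^p| \cdot |\square_j|_{\mathrm{hyp}},
\end{equation*}
and comparing $|f'(z_j)^p|\cdot|\square_j|_{\mathrm{hyp}}$ to $\int_{\square_j} |f'(z)^p| \, \frac{|dz|^2}{y}$ costs another factor of $e^{CkR|p|}$.

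Finally I would sum over the tiles $\square_j$, absorb both exponential factors into a single $\exp(C R k |p|)$ after harmlessly enlarging $C$, and recognize the resulting integral as $u(y)$ (since $\frac{|dz|^2}{h}$ on $A(y)$ is exactly $\frac{dh}{h}\,dx$). Feeding this back into \eqref{eq:improved-smirnov} yields \eqref{eq:lemma-RHS}. The only genuinely non-mechanical step is the oscillation estimate for $|f'|^p$ on a box of hyperbolic diameter $\mathcal O(R)$, and it is precisely this step that is responsible for the factor $\exp(CRk|p|)$ and that forces the eventual restriction $k|p| \to 0$ in Theorem \ref{main-thm2}. Everything else is tiling and straightforward integration.
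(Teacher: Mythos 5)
Your proof is correct and follows essentially the same route as the paper: tile $A(y)$ by boxes from $\widehat{\mathscr G_n}$, use the Bloch bound $\|\log f'\|_{\mathcal B(\mathbb{H})} \le 6k$ together with the hyperbolic diameter $\asymp R$ to control the oscillation of $|f'(z)^p|$ by $\exp(CRk|p|)$ on each tile, apply the box-average hypothesis, and sum. The paper states the freeze/unfreeze comparison of $|f'(z)^p|$ to a constant slightly more compactly, but the content is identical.
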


\begin{proof}
We partition $A(y)$ into $\widehat{\mathscr G_n}$-boxes $B_1, B_2, \dots, B_{N(y)}$. (The number of boxes 
is roughly proportional to the hyperbolic length of the segment $\{x+iy : 0 \le x \le 1\}$, but we will not use this.)
Since $f$ has a $k$-quasiconformal extension to the plane, $\| \log f' \|_{\mathcal B} \le 6 k$. As the hyperbolic diameter of a box in $\widehat{\mathscr G_n}$ is comparable to $R$, the multiplicative oscillation of $|f'(z)^p|$ in 
each box $B_j$ is at most
$$
\osc_{B_j} |f'(z)^p| := \sup_{z_1, z_2 \in B_j} \log \frac{ |f'(z_1)^p|}{ |f'(z_2)^p|} \le C_1 R k|p|.
$$
In other words, if $k|p|$ is small, $|f'(z)^p |$ is essentially constant
on boxes, i.e.~
\begin{equation}
  \int_{B_j} | f'(z)^p | \,  \biggl |  \frac{2n_f}{\rho_{\mathbb{H}}}  \biggr |^2 \dzy
  \, \approx  \,
| f'(c_j)^p |  \int_{B_j} \,  \biggl | \frac{2n_f}{\rho_{\mathbb{H}}} \biggr |^2 \dzy,
\end{equation}
where $c_j$ is an arbitrary point in $B_j$.
Hence,
$$
  \int_{B_j} \,  | f'(z)^p | \,  \biggl | \frac{2n_f}{\rho_{\mathbb{H}}} \biggr |^2 \dzy \, \le \, 
\exp(C_2Rk|p|) \cdot M   \int_{B_j} \,  | f'(z)^p | \dzy.
  $$
Summing over all the boxes that make up $A(y)$ gives (\ref{eq:lemma-RHS}).
\end{proof}

The same argument shows:
\begin{lemma}
\label{lemma-RHS2}
Suppose that the average non-linearity of $f \in \mathbf{H}_k^1$ over any box $\square \in \widehat{\mathscr G_n}$ with
$\square \subset [0,1] \times [0, y_0]$ is bounded below by $m$.
Then,\begin{equation}
\label{eq:lemma-RHS2}
u''(y) \ge m \, \exp(-CRk|p|) \cdot \frac{|p|^2}{4y^2} \cdot u(y),
\qquad y \in (0, y_0).
\end{equation}
\end{lemma}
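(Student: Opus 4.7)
The plan is to run the argument of Lemma \ref{lemma-RHS} in reverse, replacing every upper bound by the corresponding lower bound. Starting from the identity (\ref{eq:improved-smirnov}),
$$
u''(y) \, = \, \frac{|p|^2}{4y^2} \int_{A(y)} |f'(z)^p| \cdot \biggl| \frac{2n_f}{\rho_{\mathbb{H}}} \biggr|^2 \, \frac{|dz|^2}{y},
$$
I would tile the strip $A(y) = [0,1]\times[y/n,y]$ by boxes $B_1, \ldots, B_{N(y)} \in \widehat{\mathscr G_n}$, which is possible by the very design of the collection $\widehat{\mathscr G_n}$ discussed in Section \ref{sec:life-on-H}. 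This reduces everything to per-box estimates, exactly as in the proof of Lemma \ref{lemma-RHS}.

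On each box $B_j$, since $\| \log f' \|_{\mathcal B(\mathbb{H})} \le 6k$ and the hyperbolic diameter of a box in $\widehat{\mathscr G_n}$ is comparable to $R = \log n$, the multiplicative oscillation satisfies
$$
\osc_{B_j} |f'(z)^p| \, \le \, C_1 R k |p|.
$$
Consequently, choosing any $c_j \in B_j$,
$$
|f'(z)^p| \, \ge \, \exp(-C_2 R k |p|) \cdot |f'(c_j)^p|, \qquad z \in B_j,
$$
so the ``almost constant'' factor can be pulled out of the integral with the \emph{reverse} inequality at the cost of the factor $\exp(-C_2 R k |p|)$.

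Now I would apply the hypothesis (the lower-bound box average) to each $B_j$:
$$
\int_{B_j} \biggl| \frac{2n_f}{\rho_{\mathbb{H}}} \biggr|^2 \, \frac{|dz|^2}{y} \, \ge \, m \int_{B_j} \frac{|dz|^2}{y},
$$
giving
$$
\int_{B_j} |f'(z)^p| \cdot \biggl| \frac{2n_f}{\rho_{\mathbb{H}}} \biggr|^2 \frac{|dz|^2}{y} \, \ge \, m \exp(-C_2 R k|p|)\cdot|f'(c_j)^p|\int_{B_j} \frac{|dz|^2}{y}.
$$
Re-absorbing $|f'(c_j)^p|$ into the integrand using the same oscillation bound (losing another factor $\exp(-C_2 Rk|p|)$) and summing over all boxes tiling $A(y)$ recovers $\int_{A(y)} |f'(z)^p| \, (|dz|^2/y)$, which by definition equals $u(y)$. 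Bundling the two exponential factors together into a single $\exp(-CRk|p|)$ with a slightly larger constant $C$ yields (\ref{eq:lemma-RHS2}).

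The entire argument is a formal mirror of Lemma \ref{lemma-RHS}; no additional obstacle arises because both the integrand $|f'(z)^p|$ and the box averages of $|2n_f/\rho_{\mathbb{H}}|^2$ are nonnegative, so reversing inequalities does not create any sign issues. The only place one has to be slightly careful is to apply the oscillation bound in the ``correct direction'' (replacing $|f'(z)^p|$ by a lower bound, not an upper bound), which is exactly what the inequality $|f'(z)^p| \ge \exp(-C_2Rk|p|)|f'(c_j)^p|$ provides.
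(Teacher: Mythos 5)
Your proposal is correct and matches the paper exactly: the paper omits this proof, stating only "The same argument shows," and your argument is precisely the mirror of Lemma \ref{lemma-RHS} that the author had in mind. You correctly identify that the tiling of $A(y)$ by $\widehat{\mathscr G_n}$-boxes, the Bloch-norm oscillation bound, and the sign of each inequality all carry over with no obstruction.
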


\subsection{Applications}
We now prove Theorem \ref{main-thm} which says that for a fixed $k \in (0,1)$,
\begin{equation}
\label{eq:thm11revisited}
\lim_{p \to 0} \frac{B_k(p)}{|p|^2/4} \, = \,  \Sigma^2(k).
\end{equation}
\begin{proof}[Proof of Theorem \ref{main-thm}]
According to Lemma \ref{boxcart-global}, we may choose  $n \ge 1$ sufficiently large so that the average non-linearity of $f \in \mathbf{H}_k$ over 
any box  in $\widehat{\mathscr G_n}$ is at most $\Sigma^2(k) + \varepsilon/3$. Lemma \ref{lemma-RHS}
implies the differential inequalities
$$u''(y) \le \Bigl (\Sigma^2(k) + 2\varepsilon/3 \Bigr ) \cdot \frac{|p|^2}{4y^2} \cdot u(y), \qquad \text{for }p \text{ small}.$$
Applying  
Lemma  \ref{diff-ineq}(i) gives 
$$u(y) \le C \cdot y^{-|p|^2(\Sigma^2(k)+\varepsilon)/4}.$$
An analogous bound holds for $I_p(y)$ since $I_p(y) \asymp u(y)$. This proves the  upper bound in (\ref{eq:thm11revisited}). 
The lower bound is similar, but uses the converse part of Lemma \ref{boxcart-global} and Lemmas \ref{lemma-RHS2} and \ref{diff-ineq}(ii).
Note that we can assume that $\mu = \bel f$ is supported in the strip $S = \{z \in \Hbar : |\im z| < 1\}$ and is invariant under the translation
$z \to z+1$ to ensure that $f = \tilde w^\mu \in \bH_k^1$ (see the remark following the proof of Lemma \ref{boxcart}).
\end{proof}

 For small $k$, we can give a more precise estimate. Combining Lemma \ref{boxcart} with (\ref{eq:infinitesimal-form}) shows
 that the average non-linearity over a box in $\widehat{\mathscr G_n}$ is bounded by
\begin{equation}
\label{eq:k-small-bound}
\fint_{B_j} \,  \biggl | \frac{2n_f}{\rho_{\mathbb{H}}} \biggr |^2 \dzy \le (\Sigma^2 + C/R) k^2 + Ck^3.
\end{equation}
Putting this into Lemma \ref{lemma-RHS} gives 
\begin{equation}
\label{eq:improved-smirnov2}
u''(y) \le \exp(CRk|p|) \cdot \bigl [ (\Sigma^2 + C/R) k^2 + Ck^3 \bigr ]  \cdot \frac{|p|^2}{4y^2} \cdot u(y).
\end{equation}
Choosing $R \asymp 1/\sqrt{k|p|}$, we get the error term of $\mathcal O \bigl ((k|p|)^{5/2} \bigr)$ in (\ref{eq:improved-smirnov2}). We conclude that for any conformal map $f \in \bH_k^1$,
$$
u(y) \le C \cdot y^{- k^2|p|^2/4 \bigl (\Sigma^2 + C\sqrt{k|p|} \bigr )}.
$$
Repeating the argument for the special conformal maps provided by Lemma \ref{boxcart-global}(ii), we obtain the estimate in the other direction.
As mentioned in the introduction, this entails $D(k) = 1+\Sigma^2 k^2 + \mathcal O(k^{5/2})$.

\section[The improved error term]{The $\mathcal O(k^{8/3-\varepsilon})$ error term}

In this section,
 we show that the maximal Hausdorff dimension of a $k$-quasicircle satisfies
\begin{equation}
\label{eq:83}
D(k) = 1 + \Sigma^2 k^2 + \mathcal O(k^{8/3-\varepsilon}), \qquad \text{for any } \varepsilon > 0.
\end{equation}
We focus on the upper bound in (\ref{eq:83}) and leave the lower bound to the reader.
Suppose $f: \mathbb{H} \to \mathbb{C}$ is a conformal mapping of the upper half-plane which admits a $k$-quasiconformal extension with $0 < k <1/2$.
Let $$
B =  [0, 1] \times \bigl [1/e^R, 1 \bigr], \qquad R = k^{-\gamma}, \quad \gamma > 0,
$$
be a box in $\mathbb{H}$ and $z_B$ be the midpoint of its top edge. 
Our objective is to slightly improve the argument of Lemma \ref{lemma-RHS} by showing that:
\begin{lemma} Suppose $p \in [1,2)$. For any $\gamma \in (0,2/3)$,
\label{quotient-lemma}
\begin{equation}
\label{eq:quotient}
\mathcal Q(B) \, := \,
 \frac{  \int_{B} | f'(z)^p | \,  \bigl |  \frac{2n_f}{\rho_{\mathbb{H}}}  \bigr |^2 \dzy}{  \int_{B} | f'(z)^p | \dzy} 
 \, \le \,
  \Sigma^2 k^2 + \mathcal O_\gamma( k^{2+\gamma} ).
\end{equation}
\end{lemma}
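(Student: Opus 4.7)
The plan is to apply the Becker--Pommerenke block analysis directly to $B$ (taken as a single block) and to refine the weight-oscillation step that produced the $\mathcal O(k^{5/2})$ bound in the preceding section. Since $B$ is an $(e^R,1)$-box with $R=k^{-\gamma}$, Lemma~\ref{boxcart} combined with Theorem~\ref{sigmak-convex}(ii) yields the unweighted average
\[
\fint_B\left|\frac{2n_f}{\rho_{\mathbb H}}\right|^2\frac{|dz|^2}{y}\ \le\ \Sigma^2 k^2+\frac{Ck^2}{R}+Ck^3\ =\ \Sigma^2 k^2+\mathcal O(k^{2+\gamma}),
\]
so this piece is already within the desired tolerance.

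All the difficulty lies in passing from the unweighted average to the weighted quotient $\mathcal Q(B)$. Following the estimates in the proof of Lemma~\ref{lemma-RHS} and bounding $|f'|^p$ by its supremum and infimum on $B$, the crude Bloch estimate $\|\log f'\|_{\mathcal B(\mathbb H)}\le 6k$ only gives $\sup_B|f'|^p/\inf_B|f'|^p\le\exp(CRkp)$. Multiplying this oscillation factor against the box average produces an extra error of order $Rk^3=k^{3-\gamma}$, admissible only for $\gamma\le 1/2$, and merely reproduces the $\mathcal O(k^{5/2})$ bound of the previous section.

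To reach exponent $8/3-\varepsilon$, the weight oscillation must be handled at the \emph{Makarov LIL scale} $\sqrt R\cdot k$ rather than the Bloch scale $Rk$. Concretely, split $B$ into a ``typical'' subset $G$ on which $|\log(|f'(z)|/|f'(c)|)|\le \lambda\sqrt R\,k$ (with $c$ a fixed reference point in $B$ and $\lambda=\mathcal O(\sqrt{\log(1/k)})$), and an exceptional subset $B\setminus G$. The Gaussian concentration of $\log f'$ provided by Makarov's law of the iterated logarithm---equivalently, by the quadratic variation of the dyadic martingale associated to $\log f'$, as alluded to in the introduction with reference to~\cite{ivrii-mak}---ensures that $B\setminus G$ has weighted measure $\le e^{-c\lambda^2}\le k^{\gamma}$ for a suitable choice of $\lambda$. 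On $G$ the weight oscillation is $1+\mathcal O(\sqrt R\,kp\,(\log(1/k))^{1/2})$, contributing $\sqrt R\,k^3(\log(1/k))^{1/2}=k^{3-\gamma/2}(\log(1/k))^{1/2}$ to the error, which is absorbed into $\mathcal O_\gamma(k^{2+\gamma})$ whenever $\gamma<2/3$. On the exceptional set the crude factor $\exp(CRkp)$ is itself bounded (since $Rkp=k^{1-\gamma}p\to 0$), and its contribution is $\mathcal O(k^{\gamma})\cdot k^2=k^{2+\gamma}$.

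The principal obstacle is establishing Gaussian concentration against the \emph{weighted} measure $|f'|^p\,dxdy/y$, rather than Lebesgue measure for which Makarov's theorem is traditionally stated. One resolves this by bootstrapping: the weaker $\mathcal O(k^{5/2})$ estimate from the previous section already shows that the weighted and Lebesgue measures are comparable on $B$ up to factors that do not inflate the tail estimate beyond $k^{2+\gamma}$. Feeding this comparability into the LIL-based splitting closes the proof.
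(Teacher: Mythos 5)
Your proposal captures the correct main idea---split $B$ into a good set and an exceptional set, use a Makarov-type sub-Gaussian tail estimate for $\log f'$ to control the exceptional set, then combine with the unweighted box average bound---and your constraint $\gamma<2/3$ emerges for the right reason. However, there are two places where the argument is needlessly complicated and one where the reasoning is not sound as stated.

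First, the paper's threshold for the exceptional set is simply $\mathscr E=\{z\in B:|\log f'(z)-\log f'(z_B)|>k^\gamma\}$, not the LIL-scale threshold $\lambda\sqrt R\,k$ you chose. With $\eta=k^\gamma$ and $S\le R=k^{-\gamma}$, the Makarov estimate $|\{z\in L_S:|\re g(z)|>\eta\}|\le\exp(-c_0\eta^2/(k^2S))$ gives $|\mathscr E_S|\le\exp(-c_0k^{3\gamma-2})$, which for $\gamma<2/3$ decays faster than any power of $k$ --- far more than the $\mathcal O(k^\gamma)$ needed. Meanwhile, on $B\setminus\mathscr E$ the weight is automatically within a multiplicative factor $1+\mathcal O(k^\gamma)$ of $|f'(z_B)^p|$, which directly yields the $\mathcal O(k^{2+\gamma})$ error. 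Your threshold also works and yields the same final exponent, but it is a more intricate route; moreover the weight oscillation on your good set is $1+\mathcal O(\sqrt R\,kp\sqrt{\log(1/k)})=1+\mathcal O(k^{1-\gamma/2}\sqrt{\log(1/k)})$, which (after multiplying by $k^2$) is indeed $o(k^{2+\gamma})$ only for strict $\gamma<2/3$, so you lose nothing but gain nothing either.

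Second, your ``principal obstacle'' paragraph is not correct as written. You do not need any bootstrap from the $\mathcal O(k^{5/2})$ bound to transfer the tail estimate from Lebesgue to the weighted measure $|f'|^p\,dxdy/y$, and it is unclear how that bound would even yield the claimed comparability of measures. The point is much more elementary: since $\|\log f'\|_{\mathcal B(\mathbb H)}\le 6k$, the hyperbolic diameter of $B$ is $\asymp R$, and $p\in[1,2)$, one has $\osc_B|f'(z)^p|\le CRkp=Ck^{1-\gamma}p\to 0$, so for $k$ small $|f'(z)^p|$ varies by at most a fixed factor (say $e^2$) over all of $B$. This trivially converts the Lebesgue measure bound $\int_{\mathscr E}\tfrac{|dz|^2}{y}\le Ck^\gamma\int_B\tfrac{|dz|^2}{y}$ into the weighted bound $\int_{\mathscr E}|f'(z)^p|\tfrac{|dz|^2}{y}\le Ck^\gamma|f'(z_B)^p|\int_B\tfrac{|dz|^2}{y}$. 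The paper does exactly this (Lemma~\ref{global-bound}), and in fact you already observed the needed fact yourself when you noted $\exp(CRkp)$ is bounded; you just did not realize it also resolves the obstacle you flagged. Replacing the bootstrap paragraph with this one-line observation makes the argument both correct and shorter.
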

From the scale-invariance of the problem, the same estimate must also hold on any box similar to $B$.
The arguments of the previous section now give the upper bound in (\ref{eq:83}). 
Here, we remind the reader that in view of (\ref{eq:beta-dimension}), the range of exponents $p \in [1,2)$ is sufficient for applications to Minkowski dimension.

In the proof of  Lemma \ref{lemma-RHS}, we made the assumption that $Rkp$ was small in order to guarantee that $|f'(z)^p|$ was approximately constant in $B$.
To be able to take $R = k^{-\gamma}$ with $\gamma > 1/2$, we introduce the {\em exceptional set}
\begin{equation}
\label{eq:exceptional-set}
\mathscr E := \bigl \{z \in B: |\log f'(z) - \log f'(z_B)| > k^{\gamma} \bigr  \}.
\end{equation}

\begin{lemma}
\label{global-bound}
Suppose $p \in [1,2)$ and $\gamma \in (0,2/3)$. Then,
\begin{equation}
\label{eq:exc-areabound}
\int_{\mathscr E} \dzy < C \cdot k^{\gamma} \cdot \int_{B} \dzy
\end{equation}
and
\begin{equation}
\label{eq:exc-areabound2}
\int_{\mathscr E} | f'(z)^p | \dzy < C \cdot k^{\gamma} \cdot | f'(z_B)^p | \int_{B} \dzy.
\end{equation}
\end{lemma}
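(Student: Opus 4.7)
Set $\phi(z) := \log f'(z) - \log f'(z_B)$, a holomorphic Bloch function on $\mathbb{H}$ with $\phi(z_B)=0$ and $\|\phi\|_{\mathcal B(\mathbb H)} \le 6k$. I begin by reducing (\ref{eq:exc-areabound2}) to (\ref{eq:exc-areabound}). Although $B$ has large Euclidean extent, its hyperbolic diameter is $\asymp R = k^{-\gamma}$ (the vertical direction dominates, by a standard geodesic computation), so the pointwise Bloch estimate gives
$$
|\phi(z)| \le 3kR = 3k^{1-\gamma} = o(1) \qquad \text{as } k \to 0,
$$
since $\gamma < 2/3 < 1$. Hence $|f'(z)^p|/|f'(z_B)^p| = e^{p\re\phi(z)}$ is bounded by an absolute constant on $B$, reducing (\ref{eq:exc-areabound2}) to (\ref{eq:exc-areabound}) up to an absolute multiplicative constant.

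For (\ref{eq:exc-areabound}), the strategy is a moment inequality combined with Chebyshev. By the exponential transform machinery of Section \ref{sec:life-on-H}, we may assume $f \in \bH_k^1$, so $\log f'$ is periodic in $\re z$ and tends to $0$ as $\im z \to \infty$. Hardy's identity then expresses $\int_0^1 |\log f'(x+iy)|^2 \, dx$ as a weighted integral of $|n_f|^2$, which the box lemma (Lemma \ref{boxcart-global}) controls by $\Sigma^2(k) + \varepsilon \lesssim k^2$ per unit hyperbolic length. Combined with the scaling bound $|\log f'(z_B)| \le Ck$ (obtained from the Schwarz lemma applied to the holomorphic family of conformal maps) and integrated against $dy/y$ over $[e^{-R}, 1]$, this yields
$$
\int_B |\phi(z)|^2 \dzy \le Ck^2 R^2.
$$
Chebyshev's inequality then gives $|\mathscr E|_{|dz|^2/y}/|B|_{|dz|^2/y} \le C k^{2-3\gamma}$, which already suffices for $\gamma \le 1/2$.

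To cover the full range $\gamma < 2/3$, the $L^2$ estimate must be promoted to an $L^{2q}$ bound
$$
\int_B |\phi(z)|^{2q} \dzy \le C_q k^{2q} R^{q+1}, \qquad q \ge 1,
$$
reflecting the sub-Gaussian tails of Bloch functions (in the spirit of Makarov's law of iterated logarithm). This can be derived either by iterating Hardy's identity applied to $|\log f'|^{2q}$ (via $\Delta|g|^{2q} = (2q)^2|g|^{2q-2}|g'|^2$ for holomorphic $g$) together with the box lemma, or through the dyadic martingale representation of \cite{ivrii-mak}. Chebyshev with these higher moments yields $|\mathscr E|_{|dz|^2/y}/|B|_{|dz|^2/y} \le C_q k^{q(2-3\gamma)}$, and for $\gamma < 2/3$ the exponent $q(2-3\gamma)$ can be made $\ge \gamma$ by choosing $q$ sufficiently large, giving (\ref{eq:exc-areabound}). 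The critical value $\gamma = 2/3$ reflects the sub-Gaussian variance scaling: the typical $L^2$ size of $\phi$ on $B$ is $\sim k\sqrt{R} = k^{1-\gamma/2}$, and exceeding $k^\gamma$ requires the Gaussian exponent $k^{2\gamma}/(k^2R) = k^{3\gamma-2}$ to blow up. The main obstacle is the $L^{2q}$ bootstrap itself, which couples lower-order powers of $\phi$ to its derivative $n_f$ and demands careful inductive control of boundary terms from the Hardy identity.
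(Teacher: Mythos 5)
Your reduction of (\ref{eq:exc-areabound2}) to (\ref{eq:exc-areabound}) via the pointwise bound $|\phi(z)|\le CkR = Ck^{1-\gamma} = o(1)$ on $B$ (so that $|f'(z)^p/f'(z_B)^p|$ is bounded) is correct and is essentially what the paper does, phrased there as $\osc_B|f'(z)^p|<2$.

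For (\ref{eq:exc-areabound}) itself there is a genuine gap, which you yourself flag. Your $L^2$/Chebyshev argument gives $|\mathscr E|_{\hyp}/|B|_{\hyp}\lesssim k^{2-3\gamma}$, which only beats $k^\gamma$ for $\gamma\le 1/2$. To reach the full range $\gamma<2/3$ you propose an $L^{2q}$ bound
\begin{equation*}
\int_B |\phi|^{2q}\,\dzy \le C_q\,k^{2q}R^{q+1},
\end{equation*}
but you do not derive it; the sketch of ``iterating Hardy's identity'' runs into exactly the coupling of lower moments to boundary terms that you describe as the ``main obstacle.'' In other words, the proposal stops precisely where the content of the lemma begins: the higher moment bounds you need \emph{are} (a polynomial form of) Makarov's sub-Gaussian estimate for Bloch functions, and nothing in the proposal proves them. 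This is not a small omission --- without it the lemma is only established for $\gamma\le 1/2$, which is strictly weaker than the stated $\gamma\in(0,2/3)$ and would downgrade the error term in Theorem~\ref{dim-exp}.

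The paper avoids the moment bootstrap entirely. It slices $\mathscr E$ by the horizontal segments $L_S=\{z\in B:\im z=e^{-S}\}$, $0\le S\le R$, and invokes the sub-Gaussian distributional estimate (a known consequence of Makarov's martingale representation of Bloch functions, \cite[Eq.~(2.9)]{makarov90}; see also \cite{hedenmalm-atvar}):
\begin{equation*}
\bigl|\{z\in L_S : |\re g(z)|>\eta\}\bigr| \le \exp\Bigl(-c_0\,\frac{\eta^2}{k^2 S}\Bigr),\qquad g=\log f'-\log f'(z_B).
\end{equation*}
With $\eta=k^\gamma$ and $S\le R=k^{-\gamma}$ this gives $|\mathscr E_S|\le\exp(-c_0 k^{3\gamma-2})$, which is exponentially small whenever $3\gamma-2<0$, i.e.\ $\gamma<2/3$ --- far stronger than the $Ck^\gamma$ needed, and in one step. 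You should either cite this estimate directly (you gesture at it via ``sub-Gaussian tails'' and the dyadic martingale reference, but never invoke it), or prove the $L^{2q}$ bounds; as written the argument is incomplete for $1/2<\gamma<2/3$.
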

 With the above lemma, the proof of Lemma \ref{quotient-lemma} runs as follows:

\begin{proof}[Proof of Lemma \ref{quotient-lemma}  assuming Lemma \ref{global-bound}.]
\ Since $$\biggl |  \frac{2n_f}{\rho_{\mathbb{H}}}  \biggr |^2 < 36k^2 \qquad \text{and} \qquad \osc_{B} |f'(z)^p| < 2,$$ the  bound
(\ref{eq:exc-areabound2}) gives 
$$
\mathcal Q(B) \, = \, \frac{  \int_{B} | f'(z)^p | \,  \bigl |  \frac{2n_f}{\rho_{\mathbb{H}}}  \bigr |^2 \dzy}{  \int_{B} | f'(z)^p | \dzy} 
\, = \, \frac{  \int_{B \setminus \mathscr E} | f'(z)^p | \,  \bigl |  \frac{2n_f}{\rho_{\mathbb{H}}}  \bigr |^2 \dzy}{  \int_{B \setminus \mathscr E} | f'(z)^p | \dzy}
+ \mathcal O(k^{2+\gamma}).
$$
From the definition of the exceptional set (\ref{eq:exceptional-set}),
$$
 1 - Ck^\gamma \, \le \, \biggl | \frac{f'(z)}{f'(z_B)} \biggr|^p  \, \le \, 1 + Ck^\gamma, \qquad z \in B \setminus \mathscr E,
$$
we obtain
$$
\mathcal Q(B) = \fint_{B \setminus \mathscr E} \,  \biggl | \frac{2n_f}{\rho_{\mathbb{H}}} \biggr |^2 \dzy + \mathcal O(k^{2+\gamma}).
$$
According to (\ref{eq:k-small-bound}), the average non-linearity over $B$ is bounded by $(\Sigma^2 + Ck^\gamma) k^2$. Combining with
 (\ref{eq:exc-areabound}) shows that the average non-linearity over $B \setminus \mathscr E$ is also at most
 $(\Sigma^2 + Ck^\gamma) k^2$. 
This completes the proof.
\end{proof}

Let $L_S = \{z \in B : \im z = 1/e^S\}$ be the line segment consisting of points in $B$ for which the hyperbolic distance to the top edge  is $S$ and define $\mathscr E_S := \mathscr E \cap L_S$.
Set $g(z) = \log f'(z) - \log f'(z_B).$
Since $\| \log f' \|_{\mathcal B(\mathbb{H})} \le 6 k$, 
\begin{equation}
\label{eq:hawk-bound}
\bigl | \bigl \{ z \in L_S: |\re g(z)| > \eta \bigr \} \bigr | \le \exp \biggl ( - c_0 \cdot \frac{\eta^2}{k^2 S} \biggr ),
\end{equation}
where $c_0 > 0$ is a universal constant. This follows from the sub-Gaussian estimate for martingales with bounded increments 
 \cite[Equation (2.9)]{makarov90}. Alternatively, for an analytic proof of (\ref{eq:hawk-bound}), the reader may consult \cite{hedenmalm-atvar}.

\begin{proof}[Proof of Lemma \ref{global-bound}]
To show (\ref{eq:exc-areabound}), it  suffices to demonstrate that 
$|{\mathscr E_S}| < C \cdot k^{\gamma}$ for any $0 \le S \le R$. 
Putting $\eta = k^\gamma$ and $S \le R = k^{-\gamma}$ in (\ref{eq:hawk-bound}) gives
\begin{equation}
|{\mathscr E_S}|  \le \exp (-c_0 \cdot k^{3\gamma-2}).
\end{equation} In order to get any kind of decay, we must have $3\gamma - 2 < 0$. Of course, any $\gamma < 2/3$ gives
an {\em exponential} decay rate, which is more than sufficient.
The second statement (\ref{eq:exc-areabound2}) follows from (\ref{eq:exc-areabound}) and the fact that $\osc_{B} |f'(z)^p| < 2$.
\end{proof}

\section{Sparse Beltrami Coefficients}

In this section, we prove Theorem \ref{sparse-thm} which gives a stronger  bound for the dimension of quasicircle $\tilde w^\mu([0,1])$ 
if the dilatation $\mu$ has small support.
We assume that $\mu$ is a Beltrami coefficient on $\Hbar$ for which $\supp \mu \subseteq \mathcal G = \bigcup_{j \in J} A_j$ 
is a union of ``crescents'' that satisfy the quasigeodesic and separation properties. That is, we assume 
each crescent lies within a hyperbolic distance $S$ from a geodesic arc $\gamma_j \subset \Hbar$ and that the hyperbolic distance between any two crescents is bounded below by $R$.

We are interested in studying the quadratic behaviour of $t \to \Mdim \tilde w^{t\mu}([0,1])$ when $S$ is fixed and $R$ is large. In the dynamical setting,
this was considered in \cite{ivrii-phd}, where it was sufficient to measure the length of the intersection of $\mathcal G$ with
 horizontal lines. The non-dynamical case was examined by Bishop \cite{bishop-bigdef}, but without the quadratic behaviour.
The proof of Theorem \ref{sparse-thm} follows from the Becker-Pommerenke argument and the estimate:

\begin{theorem}
\label{boxcart-sparse}
For any  Beltrami coefficient $\mu$ with $\|\mu\|_\infty \le 1$ and $\supp \mu \subseteq \mathcal G$, we have
 \begin{equation}
 \fint_{L} \, \biggl |\frac{2(\mathcal S\mu)'}{\rho_{\mathbb H}}(z) \biggr |^2 \, dx \le Ce^{-R/2},
  \end{equation}
  where $L$ is a horizontal line segment which has hyperbolic diameter $R/2$.
\end{theorem}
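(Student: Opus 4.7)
The plan is to combine the locality estimate of Lemma \ref{square-locality} with the separation property to isolate the contribution of at most one crescent, and then bound that contribution by a direct kernel computation. First I would observe the following sparsity dichotomy: at most one crescent $A_{j_*}$ can lie within hyperbolic distance $R/4$ of the reflected segment $\overline L \subset \Hbar$. Indeed, if two crescents $A_i \ne A_j$ were both within $R/4$ of $\overline L$, then since $\overline L$ has hyperbolic diameter $R/2$, the triangle inequality would give
\[ d_{\Hbar}(A_i, A_j) \,\le\, R/4 + R/2 + R/4 \,=\, R, \]
contradicting the strict separation (for $R \gg S$). Split $\mu = \mu_{\mathrm{near}} + \mu_{\mathrm{far}}$ accordingly, where $\mu_{\mathrm{near}} := \mu \cdot \chi_{A_{j_*}}$ vanishes if no such $A_{j_*}$ exists.

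For the far part, every $z \in L$ satisfies $d_{\Hbar}(\overline z, \supp \mu_{\mathrm{far}}) > R/4$. Translating this hyperbolic statement to a square-neighborhood statement costs at most $O(1)$, so Lemma \ref{square-locality} gives the uniform pointwise estimate
\[ \biggl|\frac{2(\mathcal S\mu_{\mathrm{far}})'}{\rhoH}(z)\biggr| \,\le\, C\, e^{-R/4}, \qquad z \in L, \]
and this piece contributes at most $C^2 e^{-R/2}$ to the squared average.

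For the near part, I would use the integral representation \eqref{eq:beurling23} directly. The crescent $A_{j_*}$ sits at some depth $y_*$ and horizontal position $x_*$, with Euclidean area $\lesssim_S y_*^2$. Since $|\zeta - z|^2 \,\gtrsim\, (x-x_*)^2 + (y_0+y_*)^2$ uniformly for $z = x + iy_0 \in L$ and $\zeta \in A_{j_*}$, the cubic decay of the Beurling kernel yields
\[ \biggl|\frac{2(\mathcal S\mu_{\mathrm{near}})'}{\rhoH}(z)\biggr|^2 \,\lesssim\, \frac{y_0^2\, y_*^4}{\bigl((x-x_*)^2 + (y_0+y_*)^2\bigr)^3}. \]
Integrating in $x \in \mathbb R$ produces an absolute bound of order $\min(y_0, y_0^2/y_*, y_*^4/y_0^3) \le y_0$, independent of the length of $L$. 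Since $|L|$ is exponentially large in $R$ under the hyperbolic diameter hypothesis, dividing yields the desired bound $\fint_L \le C e^{-R/2}$.

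The main obstacle lies in the near part: the naive pointwise locality bound $|G_{j_*}(z)| \le C e^{-d_{j_*}(z)}$ is insufficient to produce the full exponential rate---it only yields $e^{-R/4}$ after averaging. One must exploit the fact that the Beurling kernel decays \emph{cubically} (not linearly) in Euclidean distance, which is exactly what makes $\int_L |G_{j_*}|^2 dx$ absolutely bounded in $|L|$, independent of how large $L$ is. A cleaner conceptual alternative would be to periodize $\mu_{\mathrm{near}}$ horizontally with a large period and invoke Lemma \ref{boxcart5} in the pigeonhole style of Lemma \ref{boxcart}, then peel off the remaining crescents using the separation property.
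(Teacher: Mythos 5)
Your decomposition is genuinely different from the paper's, and both halves of it have gaps.

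The paper splits $\mathcal G$ into crescents lying wholly inside the Euclidean rectangle directly above $\overline L$ (Group~1) and the remainder (Group~2), and then uses the inequality $\int_L |2(\mathcal S\mu)'/\rhoH|^2 \le \int_L|2(\mathcal S\mu_2)'/\rhoH|^2 + C\int_L|(\mathcal S\mu_1)'/\rhoH|$. The Group~1 term is controlled in $L^1$ via Lemma~\ref{small-intersection} (a Fubini argument), using the separation property to show that every horizontal line meets $\mathcal G_1$ in total Euclidean length $O(1)$; the Group~2 term is $O(1)$ by locality (Lemma~\ref{square-locality}). The crucial point is that the crescents piled up directly above $L$ are \emph{not} uniformly small pointwise --- at $x$-values sitting directly over a crescent, $|(\mathcal S\mu_1)'/\rhoH|$ is of order one --- but they occupy only an $e^{-R/2}$-fraction of $L$, and the $L^1$ estimate captures this. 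Your near/far split by hyperbolic distance from $\overline L$ discards this structure and tries to replace it by pointwise bounds, which creates both of the problems below.

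\textbf{Far part.} The assertion that translating ``$d_{\Hbar}(\overline z,\supp\mu_{\mathrm{far}})>R/4$'' into the hypothesis of Lemma~\ref{square-locality} ``costs at most $O(1)$'' is exactly what the remark following Lemma~\ref{square-locality} warns against: if $\mu$ only vanishes on a hyperbolic ball of radius $L$, the pointwise estimate degrades to $CLe^{-L}$, and a naive inscription of a box $Q_{L'}$ inside $B_{\hyp}(L)$ forces $L'\approx L/3$ because the corners of $Q_{L}$ sit at hyperbolic distance $\approx 3L$. Neither gives the clean $Ce^{-R/4}$ you need. There is a way to rescue the interior of $L$: the \emph{union} of hyperbolic $R/4$-balls over all of $\overline L$ is a tube which, because $|\overline L|\asymp e^{R/2}$ is so long, does contain $Q_{R/4}(\overline z)$ when $\overline z$ is at distance $\gtrsim e^{R/4}y_0$ from the endpoints; the endpoint slab then needs the weaker $CRe^{-R/4}$ remark bound together with the fact that it occupies only an $e^{-R/4}$-fraction of $L$. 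None of this appears in your write-up --- as written the far estimate is unjustified.

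\textbf{Near part.} The computation treats $A_{j_*}$ as a blob concentrated at a single point $(x_*,-y_*)$ with area $\lesssim_S y_*^2$. But crescents satisfy the quasigeodesic property: $A_{j_*}$ lies within hyperbolic distance $S$ of a geodesic \emph{segment} $\gamma_{j_*}$, which may have arbitrarily large hyperbolic length. Such an $A_{j_*}$ is a long, thin ``banana'' whose Euclidean area bears no relation to $y_*^2$, and the lower bound $|\zeta-z|^2\gtrsim (x-x_*)^2+(y_0+y_*)^2$ fails for $\zeta$ ranging over the whole segment. In particular, the near crescent may cross $\overline L$ (this is the $A_\times$ of the paper's proof), in which case it is nowhere near a blob and your integral bound does not follow. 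The paper handles this case by the geometric observation that a geodesic segment, restricted to $\{-1\le\im w<0\}$, lies in an $O(1)$-neighbourhood of two vertical rays, and then invokes Lemma~\ref{square-locality} again.

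Your final remark does flag trouble with the near part and floats the periodization/pigeonhole alternative, which is a reasonable fallback; but the proof as submitted does not establish the theorem, and in particular does not recover the clean $Ce^{-R/2}$ rate without extra polynomial factors.
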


The crucial feature of hyperbolic geometry that we use is that {\em a horocycle connecting two points is exponentially longer than the geodesic.} 
Thus, $L$ is extremely long: its length (as measured in the hyperbolic metric) is comparable to $e^{R/2}$.

For convenience, let us denote the horizontal lines in $\mathbb{C}$ by
 $$\ell_y = \{z : \im z = y\}, \qquad \text{and} \qquad \bar{\ell}_y = \{z : \im z = -y\}.$$
We need an elementary lemma, which is an exercise in Fubini's theorem:

\begin{lemma}
\label{small-intersection}
Suppose $\mu$ is a Beltrami coefficient on $\Hbar$, with $\|\mu\|_\infty \le 1$, such that the length of the intersection of 
any horizontal line $\bar{\ell}_y$ with $\supp \mu$ is bounded by $M$.
Then, for any $y > 0$, 
$$
 \int_{\ell_y} \, \biggl |\frac{(\mathcal S\mu)'}{\rho_{\mathbb H}}(z) \biggr | \, dx \le \frac{8}{\pi} \cdot M.
 $$
\end{lemma}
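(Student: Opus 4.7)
The plan is to compute $\int_{\ell_y}|(\mathcal S\mu)'(z)|\,dx$ directly from the integral representation (\ref{eq:beurling23}) by swapping the order of integration, reducing the bound to the hypothesis on horizontal slices of $\supp\mu$. Throughout, I use $\rho_{\mathbb H}(z)=1/\im z$, which is the convention consistent with Lemma \ref{qbounds-H}.

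Starting from
\[
|(\mathcal S\mu)'(x+iy)|\ \le\ \frac{2}{\pi}\int_{\Hbar}\frac{|\mu(\zeta)|}{|\zeta-(x+iy)|^{3}}\,|d\zeta|^{2},
\]
I would integrate in $x$ over $\ell_y$ and apply Fubini to push the $x$-integral inside. Writing $\zeta=u-iv$ with $v>0$, the inner integral becomes
\[
\int_{-\infty}^{\infty}\frac{dx}{\bigl((u-x)^{2}+(y+v)^{2}\bigr)^{3/2}}\ =\ \frac{2}{(y+v)^{2}},
\]
a standard one-dimensional integral. This yields
\[
\int_{\ell_y}|(\mathcal S\mu)'(z)|\,dx\ \le\ \frac{4}{\pi}\int_{0}^{\infty}\frac{1}{(y+v)^{2}}\left(\int_{-\infty}^{\infty}|\mu(u-iv)|\,du\right)dv.
\]

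Next I would invoke the standing assumption that $|\mu|\le 1$ and that $\supp\mu\cap\bar\ell_v$ has length at most $M$, so the inner $u$-integral is at most $M$ for every $v>0$. The remaining one-dimensional integral $\int_{0}^{\infty}(y+v)^{-2}\,dv=1/y$ gives
\[
\int_{\ell_y}|(\mathcal S\mu)'(z)|\,dx\ \le\ \frac{4M}{\pi y}.
\]
Multiplying by $1/\rho_{\mathbb H}(z)=y$ (which is constant on $\ell_y$) yields a bound of $4M/\pi$, which is even stronger than the $8M/\pi$ stated. This is a straightforward Fubini calculation, and I do not anticipate any real obstacle: the only subtlety is keeping track of the hyperbolic factor and the sign of $\im\zeta$, both of which are routine.
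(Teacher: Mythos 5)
Your proof is correct and is exactly the Fubini calculation the paper has in mind --- the paper itself only remarks that the lemma ``is an exercise in Fubini's theorem'' and does not write out the argument. You start from the integral representation (\ref{eq:beurling23}), push the $x$-integral inside, evaluate $\int_{\mathbb{R}}(t^2+a^2)^{-3/2}\,dt=2/a^2$, and then use the hypothesis to bound the $u$-slice integral by $M$; all steps check out, and your use of $\rho_{\mathbb H}(z)=1/\im z$ is consistent with the paper (as one sees from the arithmetic in Lemma~\ref{qbounds-H}, where $|(\mathcal S\mu)'|\le 4/(\pi y_0)$ yields $|2(\mathcal S\mu)'/\rho_{\mathbb H}|\le 8/\pi$). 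Your final constant $4M/\pi$ is in fact sharper than the stated $8M/\pi$; the paper's stated constant appears to carry the extra factor of~$2$ used in the $|2(\mathcal S\mu)'/\rho_{\mathbb H}|$ normalization elsewhere, even though the lemma as written omits that~$2$ from the integrand. Either constant serves the purpose in the proof of Theorem~\ref{boxcart-sparse}, so the discrepancy is harmless.
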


\begin{proof}[Proof of Theorem \ref{boxcart-sparse}.]
From the scale-invariance of the problem, we may assume that $L = \bigl [i, i+|L| \bigr ]$, where $|L| \asymp e^{R/2}$.
We divide the crescents $\{A_j\}_{j \in J}$ into two groups.
Group 1 consists of crescents that lie wholly above $\overline{L}$, that is,
$$A_j \subset \bigl [0, |L| \bigr ] \times [-1,0].$$
The remaining crescents form Group 2.
By assumption, there can be at most one special crescent that crosses $\overline{L}$. We denote it by $A_\times$ if it exists. It necessarily belongs to
the second group.

We write $\mu = \mu_1 + \mu_2$
 where $\supp \mu_1 \subseteq \mathcal G_1 = \bigcup_{j \in J_1} A_j$ and $\supp \mu_2 \subseteq \mathcal G_2 = \bigcup_{j \in J_2} A_j$.  
 Expanding the square and
 using Lemma \ref{qbounds-H}, we get
  \begin{equation}
\label{eq:sum-of-two-things}
 \int_{L} \, \biggl |\frac{2(\mathcal S\mu)'}{\rho_{\mathbb H}}(z) \biggr |^2 \, dx \le 
  \int_{L} \, \biggl |\frac{2(\mathcal S\mu_2)'}{\rho_{\mathbb H}}(z) \biggr |^2 \, dx
  +
   \frac{96}{\pi} \int_{L} \, \biggl |\frac{(\mathcal S\mu_1)'}{\rho_{\mathbb H}}(z) \biggr | \, dx.
     \end{equation}
To complete the proof, we need to show that both summands in (\ref{eq:sum-of-two-things}) are $\mathcal O(1)$.
For the first summand, it suffices to note that $\supp \mu_2$ is contained
in the union of the half-planes $\{w: \re z < c_1\}$, $\{w: \re z > e^{-R/2}-c_1\}$ and the special crescent $A_\times$ if it exists.
With this information, the estimate is settled by Lemma \ref{square-locality}. The reader may find it helpful to note that 
$A_\times$ is contained in a hyperbolic $\mathcal O(1)$ neighbourhood of two vertical lines.
For the second summand, we appeal to Lemma \ref{small-intersection}, where we use the bound
$|\mathcal G_1 \cap \bar{\ell}_y| = \mathcal O(1).$
As observed in \cite[Section 6]{ivrii-phd}, the hyperbolic length of the intersection
$A_j  \cap \ell_y$ is $\mathcal O(1)$, but as soon as soon as $\bar{\ell}_y$ intersects a crescent $A_j$, a
segment of hyperbolic length $\mathcal O(e^{R/2})$ must be disjoint from the other crescents.
\end{proof}

\bibliographystyle{amsplain}

\end{document}